\theoremstyle{plain}
\newtheorem{theorem}{Theorem}[section]
\newtheorem{lemma}[theorem]{Lemma}
\newtheorem{proposition}[theorem]{Proposition}
\theoremstyle{definition}
\newtheorem{definition}[theorem]{Definition}
\newtheorem{definition-theorem}[theorem]{Definition-Theorem}
\newtheorem{example}[theorem]{Example}
\theoremstyle{remark}
\newtheorem{remark}[theorem]{Remark}
\numberwithin{equation}{section} \setcounter{tocdepth}{1}
\newcommand{\End}{\operatorname{End}}
\newcommand{\Ker}{\operatorname{Ker}}
\newcommand{\tll}{\Theta_{ll}}
\newcommand{\tnn}{\Theta_{nn}}
\newcommand{\tln}{\Theta_{ln}}
\newcommand{\tun}{\Theta_{un}}
\newcommand{\tul}{\Theta_{ul}}
\newcommand{\tuu}{\Theta_{uu}}
\newcommand{\cX}{{\mathcal{X}}}
\newcommand{\surj}{\to\kern-1.8ex\to}
\newcommand{\cF}{\mathcal{F}}
\newcommand{\cB}{\mathcal{B}}
\newcommand{\Diff}{\mathrm{Diff}}
\newcommand{\Conf}{\mathrm{Conf}}
\newcommand{\Sol}{\mathrm{Sol}}
\newcommand{\Met}{\mathrm{Met}}
\newcommand{\fru}{\mathfrak{u}}
\newcommand{\frh}{\mathfrak{h}}
\newcommand{\frw}{\mathfrak{w}}
\newcommand{\fre}{\mathfrak{e}}
\newcommand{\G}{\mathrm{G}}
\newcommand{\mS}{\mathrm{S}}
\newcommand{\cL}{\mathcal{L}}
\newcommand{\Spin}{\mathrm{S}}
\newcommand{\Sl}{\mathrm{SL}}
\newcommand{\arxiv}[1]{{\tt
		\href{http://www.arXiv.org/abs/#1}{arXiv:#1}}}
\newcommand{\eqdef}{\stackrel{{\rm def.}}{=}}
\newcommand{\cI}{\mathcal{I}}
\newcommand{\dd}{\mathrm{d}}
\newcommand{\Cl}{\mathrm{Cl}}
\newcommand{\frf}{\mathfrak{f}}
\begin{document}

\title[Parallel spinors on globally hyperbolic Lorentzian four-manifolds]{Parallel spinors on globally hyperbolic Lorentzian four-manifolds}
\author[\'A. Murcia]{\'Angel Murcia}
\author[C. S. Shahbazi$^{\ast}$]{C. S. Shahbazi$^{\ast}$}

\address{Instituto de F\'isica Te\'orica UAM/CSIC, Madrid, Kingdom of Spain}
\email{angel.murcia@csic.es}

\address{Department of Mathematics, University of Hamburg, Germany}
\email{carlos.shahbazi@uni-hamburg.de}

\thanks{We would like to thank V. Cort\'es, T. Leistner and A. Moroianu for very interesting discussions and comments. Part of this work was undertaken during a visit of C.S.S. to the University Paris-Saclay under the Deutsch-Franz\"osische  Procope Mobilit\"at program. C.S.S. would like to thank this very welcoming institution for providing a nice and stimulating working environment. The work of \'A.M. is funded by the Spanish FPU Grant No. FPU17/04964, with additional support from the MCIU/AEI/FEDER UE grant PGC2018-095205-B-I00 and the Centro de Excelencia Severo Ochoa Program grant SEV-2016-0597. The work of C.S.S. is supported by the Germany Excellence Strategy \emph{Quantum Universe} - 390833306.}

\maketitle

\begin{abstract}
We investigate the differential geometry and topology of globally hyperbolic four-manifolds $(M,g)$ admitting a parallel real spinor $\varepsilon$. Using the theory of parabolic pairs recently introduced in \cite{Cortes:2019xmk}, we first formulate the parallelicity condition of $\varepsilon$ on $M$ as a system of partial differential equations, the \emph{parallel spinor flow equations}, for a family of polyforms on an appropriate Cauchy surface $\Sigma \hookrightarrow M$. Existence of a parallel spinor on $(M,g)$ induces a system of \emph{constraint} partial differential equations on $\Sigma$, which we prove to be equivalent to an exterior differential system involving a cohomological condition on the shape operator of the embedding $\Sigma\hookrightarrow M$. Solutions of this differential system are precisely the allowed initial data for the evolution problem of a parallel spinor and define the notion of \emph{parallel Cauchy pair} $(\fre,\Theta)$, where $\fre$ is a coframe and $\Theta$ is a symmetric two-tensor. We characterize all parallel Cauchy pairs on simply connected Cauchy surfaces, refining a result of Leistner and Lischewski. Furthermore, we classify all compact three-manifolds admitting parallel Cauchy pairs, proving that they are canonically equipped with a locally free action of $\mathbb{R}^2$ and are isomorphic to certain torus bundles over $S^1$, whose Riemannian structure we characterize in detail. Moreover, we classify all left-invariant parallel Cauchy pairs on simply connected Lie groups, specifying when they are allowed initial data for the Ricci flat equations and when the shape operator is Codazzi. Finally, we give a novel geometric interpretation of a class of parallel spinor flows and solve it in several examples, obtaining explicit families of four-dimensional Lorentzian manifolds carrying parallel spinors.
\end{abstract}

\setcounter{tocdepth}{1} 
\tableofcontents


\section{Introduction}
\label{sec:intro}


\noindent
Globally hyperbolic four-dimensional Lorentzian manifolds play a fundamental role in Lorentzian geometry and mathematical physics, especially in mathematical General Relativity, where they provide a natural class of four-dimensional space-times for which the initial value problem of Einstein field equations is well-posed \cite{Yvonne,Choquetbook}. A natural geometric condition to impose on a globally hyperbolic \emph{spin} four-manifold $(M,g)$ is the existence of a spinor parallel with respect to the Levi-Civita associated to $g$. Despite the fact that the local structure of Lorentzian four-manifolds admitting a parallel spinor is well-known since the early days of mathematical general relativity and supergravity \cite{Tod:1983pm}, see also \cite{Bryant}, the more refined global differential geometric and topological aspects of such Lorentzian manifolds have been addressed in the literature only recently \cite{BaumLeistnerLischewski,BaumMuller,LeistnerLischewski,LeistnerSchliebner,Lischewski:2015cya}, see also \cite{BarLarzLeistner,Bazaikin,Candela:2002rr,GlobkeLeistner,Schliebner} for related global problems in Lorentzian geometry. The global differential geometric and topological study of globally hyperbolic Lorentzian manifolds of special holonomy, of which manifolds admitting a parallel irreducible spinor constitute a particular class, was indeed proposed in \cite{LeistnerLischewski} as a long term research program in the study of Lorentzian manifolds of special geometric type. In the spirit of such proposal, the main goal of this article is to investigate the differential geometry and topology of connected, oriented and time-oriented globally hyperbolic Lorentzian four-manifolds $(M,g)$ carrying a real parallel spinor field $\varepsilon \in \Gamma(\mathrm{S}_g)$, understood as a section of a bundle of irreducible real Clifford modules over the bundle of Clifford algebras of $(M,g)$. In order to do this, we will exploit the theory of \emph{parabolic pairs}, recently developed in \cite{Cortes:2019xmk}, which provides an equivalent description of a parallel spinor as a pair of certain one-forms satisfying a specific system of first order partial differential equations. The theory of parabolic pairs is a particular case of a general framework developed in Op. Cit. to study irreducible real spinors satisfying a \emph{generalized} Killing spinor equation. 


The equivalent description of a parallel spinor as a parabolic pair allows us to give a novel formulation and characterization of the evolution problem and the corresponding constraint equations on a given Cauchy surface $\Sigma \hookrightarrow M$. In particular, we show that specifying allowed initial data on $\Sigma$ is equivalent to specifying a \emph{parallel} Cauchy pair. The latter is defined as a pair $(\fre,\Theta)$ where $\fre$ is a coframe on $\Sigma$ and $\Theta$ is a symmetric tensor of $(2,0)$ type satisfying a specific exterior differential system. If such pair satisfies also the constraint equations corresponding to the four-dimensional Ricci-flatness problem, we say that $(\fre,\Theta)$ is \emph{constrained Ricci-flat}. On the other hand, if $\Theta$ is a Codazzi tensor, we say that $(\fre,\Theta)$ is Codazzi. By the resolution of the initial value problem of a parallel spinor \cite{LeistnerLischewski,Lischewski:2015cya}, every parallel Cauchy pair $(\fre,\Theta)$ on $\Sigma$ admits a Lorentzian development equipped with a parallel spinor, a fact that we can exploit to study the existence of parallel spinors through the study of Cauchy pairs. Our first result in this direction is Theorem \ref{thm:universalcover} which refines, in the specific case of four Lorentzian dimensions, Theorem 4 in \cite{LeistnerLischewski}, of which we provide an alternative proof using the framework of parallel Cauchy pairs, see Remark \ref{remark:universalcover}. We recall that, on the other hand, the results of \cite{LeistnerLischewski} hold in every dimension.

Every parallel Cauchy pair $(\fre,\Theta)$ defines a canonical transversely orientable codimension one foliation $\cF_{\fre}\subset \Sigma$. Our next result is Theorem \ref{thm:CauchyPairsClosed}, which characterizes all Cauchy pairs and associated foliations $\cF_{\fre}$ on compact Cauchy surfaces $\Sigma$. Since the constraint equations of a parallel spinor correspond to a certain type of \emph{imaginary} generalized Killing spinor equations \cite{BaumMuller}, the previous theorems can be understood as classification results about three-manifolds admitting imaginary generalized Killing spinor equations. 

Parallel Cauchy pairs $(\fre,\Theta)$ admit a natural notion of \emph{left-invariance} when defined on three-dimensional Lie groups. We exploit this fact to provide a classification of all left-invariant Cauchy pairs on connected and simply connected Lie groups, specifying when they are in addition constrained Ricci-flat or Codazzi. This classification is presented in Theorem \ref{thm:allcauchygroups}, which shows that there exists a plethora of allowed left invariant initial data for the problem of a parallel spinor on a Lorentzian four-manifold, some of which satisfy also the constraint equations of Ricci-flatness or the Codazzi condition on $\Theta$, although not necessarily.

The evolution problem of a parallel Cauchy pair yields a complicated system of \emph{flow} equations, which we show to admit a simple geometric interpretation for \emph{comoving} globally hyperbolic Lorentzian manifolds in Theorem \ref{thm:comovingparallelflow}. The latter are defined as globally hyperbolic manifolds which admit a standard presentation in which $g(\partial_t,\partial_t) = - 1$, see Definitions \ref{def:comovingparallelspinorflow} and \ref{def:comovinggloballyhyper}. We exploit Theorem \ref{thm:comovingparallelflow} to construct large families of Lorentzian four-manifolds admitting parallel spinors, which are obtained as explicit solutions of the evolution equations of a Cauchy pair. We believe that the formulation of the evolution problem given in the previous theorem might be useful in order to obtain a simplified proof of the well-posedness of the initial value problem of a parallel spinor \cite{LeistnerLischewski,Lischewski:2015cya}. 


\subsection{Outline of the paper}


In Section \ref{sec:Spinors4d} we introduce the theory of parabolic pairs and parallel Cauchy pairs on globally hyperbolic Lorentzian four-manifolds, considering in addition the constrained Ricci-flat and Codazzi conditions. In Section \ref{sec:topologyCauchyPairs} we characterize parallel Cauchy pairs on simply connected Cauchy surfaces and we characterize all parallel Cauchy pairs and associated foliations in the compact case. In Section \ref{sec:leftinvariant} we classify all left-invariant parallel Cauchy pairs on simply connected Lie groups. Finally, in Section \ref{sec:examplesflows} we study a particular parallel spinor flow which we characterize geometrically and solve explicitly in particular cases. 


\subsection{Conventions}


We work in mostly plus signature, that is, Lorentzian metrics are always assumed to be of signature $(-,+,+,+)$, whence time-like vector fields have negative norm. Given a Lorentzian four-manifold $(M,g)$, every fiber of its bundle of Clifford algebras $\Cl(M,g)$ is isomorphic to the standard real Clifford algebra $\Cl(3,1)$ defined with the $+$ convention. That is, if $(e_0,e_1,e_2.e_3)$ is the standard orthonormal basis of four-dimensional Minkowski space $\mathbb{R}^{3,1}$, with $e_0$ time-like, then the following relations hold in $\Cl(3,1)$:
\begin{equation*}
e_0^2 = - 1\, , \quad e_1^2 = 1, \quad e_2^2 = 1, \quad e_3^2 = 1\, .
\end{equation*}

\noindent
Note that this convention is opposite to that of \cite{LawsonMichelsohn}.


\section{Parallel real spinors on Lorentzian four-manifolds}
\label{sec:Spinors4d}


In this section we develop the theory of parallel spinors on four-dimensional Lorentzian manifolds, assuming as the starting point of our investigation one of the main results of \cite{Cortes:2019xmk}, which characterizes parallel spinors in terms of a certain type of distribution satisfying a prescribed system of partial differential equations.


\subsection{General theory}
\label{subsec:GeneralSpinors4d}


Let $(M,g)$ be a four-dimensional \emph{space-time}, that is, a connected, oriented and time oriented Lorentzian four-manifold equipped with a Lorentzian metric $g$. We assume that $(M,g)$ is equipped with a bundle of irreducible real spinors $\mathrm{S}_g$. This is by definition a bundle of irreducible real Clifford modules over the bundle of Clifford algebras of $(M,g)$. Existence of such $\mathrm{S}_g$ is in general obstructed. The obstruction was shown in \cite{LazaroiuShahbazi,Lazaroiu:2017zpl} to be equivalent to the existence of a spin structure $Q_g$, in which case $\mS_g$ can be considered to be a vector bundle associated to $Q_g$ through the tautological representation induced by the natural embedding $\mathrm{Spin}_{+}(3,1)\subset \Cl(3,1)$, where $\mathrm{Spin}_{+}(3,1)$ denotes the connected component of the identity of the spin group in signature $(3,1) = -++\,+$ and $\Cl(3,1)$ denotes the real Clifford algebra in signature $(3,1)$. 

\begin{remark}
The \emph{tautological representation} of $\mathrm{Spin}_{+}(3,1) \subset \Cl(3,1)$ is the representation obtained by restriction of the unique irreducible real Clifford representation $\gamma\colon \Cl(3,1) \to \End(\mathbb{R}^4)$ of $\Cl(3,1)$. This representation is real of real type (the commutant of the image of $\gamma$ in $\End(\mathbb{R}^4)$ is trivial) and $\gamma$ is in fact an isomorphism of unital and associative algebras. In particular $\mathbb{R}^4$ admits a skew-symmetric non-degenerate bilinear pairing which is invariant under $\mathrm{Spin}_{+}(3,1)$ transformations \cite{ACDP,AC} (note that this bilinear \emph{cannot} be chosen to be symmetric). 
\end{remark}

\noindent
We will assume, without loss of generality, that $(M,g)$ is spin and equipped with a fixed spin structure $Q_g$. Then, the Levi-Civita connection $\nabla^g$ on $(M,g)$ induces canonically a connection on $\mS_g$, the \emph{spinorial} Levi-Civita connection, which we denote for simplicity by the same symbol. 

\begin{definition}
A \emph{spinor field} $\varepsilon$ on $(M,g,\mS_g)$ is a smooth section $\varepsilon\in \Gamma(\mS_g)$ of $\mS_g$. A spinor field $\varepsilon$ is said to be \emph{parallel} if $\nabla^g\varepsilon = 0$.
\end{definition}

\noindent
For every light-like one-form $u\in \Omega^1(M)$ we define an equivalence relation $\sim_u$ on the vector space of one-forms as follows. Given $l_1 , l_2 \in \Omega^1(M)$ the equivalence relation $\sim_u$ declares $l_1 \sim_u l_2$ to be equivalent if and only if $l_1 = l_2 + f u$ for a function $f\in C^{\infty}(M)$. We denote by:
\begin{equation*}
\Omega^1_{u}(M) \eqdef \frac{\Omega^1(M)}{\sim_u}\, ,
\end{equation*}

\noindent
the $C^\infty(M)$-module of equivalence classes defined by $\sim_u$.

\begin{definition}
A {\bf parabolic pair} $(u,[l])$ on $(M,g)$ consists of a nowhere vanishing null one-form $u\in \Omega^1(M)$ and an equivalence class of one-forms:
\begin{equation*}
[l]\in \Omega^1_u(M)\, ,
\end{equation*}

\noindent
such that the following equations hold:
\begin{equation*}
g(l,u) =  0\, , \qquad g(l,l) = 1\, ,
\end{equation*}

\noindent
for some, and hence for all, representatives $l\in [l]$.
\end{definition}

\noindent
The starting point of our analysis is the following result, which follows from \cite[Theorems 4.26 and 4.32]{Cortes:2019xmk} and gives the characterization of parallel spinors on $(M,g)$ that will be most convenient for our purposes.

\begin{proposition}
\label{prop:parallelspinorgeneral}
A space-time four-manifold $(M,g)$ admits a parallel spinor field $\varepsilon\in \Gamma(\mS_g)$ for some bundle of irreducible spinors $\mS_g$ over $(M,g)$ if and only if there exists a parabolic pair $(u,[l])$ on $(M,g)$ satisfying:
\begin{equation}
\label{eq:parallelspinorgeneral}
\nabla^g u = 0\, , \qquad \nabla^g l = \kappa\otimes u\, ,
\end{equation}

\noindent
for some representative (and hence for all) $l\in [l]$ and a one-form $\kappa \in \Omega^1(M)$.  
\end{proposition}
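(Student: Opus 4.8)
The plan is to reduce the claim to the dictionary between nowhere-vanishing irreducible real spinors and their associated \emph{polyforms}, as developed in \cite[Theorems 4.26 and 4.32]{Cortes:2019xmk}, and then to transport the parallelism condition across this dictionary. The central object is the endomorphism $E_\varepsilon = \varepsilon \otimes \varepsilon^\ast \in \Gamma(\End(\mS_g))$, where $\varepsilon^\ast$ is the dual of $\varepsilon$ computed with the $\mathrm{Spin}_{+}(3,1)$-invariant skew-symmetric pairing $\langle \cdot , \cdot \rangle$ of the preceding Remark. Under the algebra isomorphism $\Cl(3,1) \cong \End(\mathbb{R}^4)$, the fiberwise $E_\varepsilon$ is identified with a polyform $\alpha_\varepsilon \in \Omega^\bullet(M)$, and since $\langle \cdot , \cdot \rangle$ is skew we have $\langle \varepsilon, \varepsilon \rangle = 0$, so $E_\varepsilon$ is a rank-one \emph{nilpotent} endomorphism: this is the algebraic source of the \emph{parabolic}, rather than orthogonal, nature of the invariants.

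First I would record the purely algebraic classification of \cite[Theorem 4.26]{Cortes:2019xmk}: the surviving components of $\alpha_\varepsilon$ are governed by the symmetry of $\langle \cdot , \cdot \rangle$ with respect to Clifford multiplication, and they are determined by a null one-form $u$ (the degree-one, or Dirac-current, component, whose nullity is forced algebraically) together with a decomposable component of the form $u \wedge l$, from which $l$ is recovered only modulo multiples of $u$. The constraints $g(l,u) = 0$ and $g(l,l) = 1$ then read off the defining relations of a parabolic pair, so that $\varepsilon$, up to sign since the representation is of real type with trivial commutant, is equivalent to the pair $(u,[l])$.

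Next I would transfer differentiability. Because $\nabla^g$ is metric and Clifford multiplication is parallel, the spinorial Levi-Civita connection commutes with the square map, so $\nabla^g \varepsilon = 0$ yields $\nabla^g \alpha_\varepsilon = 0$, equivalently the simultaneous parallelism of $u$ and $u \wedge l$. The first gives $\nabla^g u = 0$. For the second, using $\nabla^g u = 0$ we obtain $u \wedge \nabla^g l = 0$, which means exactly that $\nabla^g_X l$ is proportional to $u$ for every vector field $X$, that is $\nabla^g l = \kappa \otimes u$ for some $\kappa \in \Omega^1(M)$. This condition is manifestly well defined on the class $[l]$: replacing $l$ by $l + fu$ changes $\nabla^g l$ by $\dd f \otimes u$, which is reabsorbed into $\kappa$. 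Conversely, given a parabolic pair satisfying the system, the same computation gives $\nabla^g_X(u \wedge l) = u \wedge \nabla^g_X l = \kappa(X)\, u \wedge u = 0$, so that $\alpha_\varepsilon$ is parallel, and \cite[Theorem 4.32]{Cortes:2019xmk} reconstructs from it a spinor $\varepsilon$ realizing $\nabla^g \varepsilon = 0$.

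The hard part will be the reconstruction step and its interaction with the sign ambiguity: one must verify that parallelism of the polyform lifts to genuine parallelism of $\varepsilon$, rather than merely to parallelism up to a pointwise automorphism of $\mS_g$. This is precisely where the real type of the tautological representation, that is the trivial commutant of $\gamma$, is indispensable, since it collapses the reconstruction ambiguity to a single global sign, which is then automatically locally constant under a metric connection and hence constant on the connected manifold $M$.
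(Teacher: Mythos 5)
Your proposal is correct and takes essentially the same route as the paper: the paper offers no independent proof of this proposition, deriving it directly from \cite[Theorems 4.26 and 4.32]{Cortes:2019xmk}, which is exactly the dictionary you invoke, with your transfer-of-parallelism computation ($\nabla^g u = 0$ and $u \wedge \nabla^g_X l = 0 \Leftrightarrow \nabla^g l = \kappa \otimes u$) matching what the paper leaves implicit in that citation. The details you supply — the nilpotent square $\varepsilon\otimes\varepsilon^{\ast}$, the polyform $u + u\wedge l$, and the sign ambiguity of the reconstruction — are consistent with the remark the paper places immediately after the statement and with its later discussion of the square spinor map.
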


\begin{remark}
More precisely, Reference \cite{Cortes:2019xmk} proves that a nowhere vanishing spinor $\varepsilon \in \Gamma(\mathrm{S}_g)$ on $(M,g)$ defines a unique distribution of co-oriented parabolic two-planes in $M$, which in turn determines uniquely both $u$ and the equivalence class of one-forms $[l]$. Conversely, any such distribution determines a nowhere vanishing spinor on $(M,g)$, unique up to a global sign, with respect to a spin structure on $(M,g)$. Moreover, \cite[Theorem 4.26]{Cortes:2019xmk} establishes a correspondence between a certain type of first-order partial differential equations for $\varepsilon$ and their equivalent as systems of partial differential equations for $(u,[l])$, of which Equations \eqref{eq:parallelspinorgeneral} constitute the simplest case. The reader is referred to \cite{Cortes:2019xmk} for further details. 
\end{remark}

\begin{remark}
Given a parabolic pair $(u,[l])$, constructing its associated spinor field $\varepsilon \in \Gamma(\mS_g)$ can be difficult, since it requires computing the preimage of the polyform $u + u\wedge l$ through the \emph{square spinor map} \cite[\textsection IV]{LawsonMichelsohn}. This is however not problematic for our purposes, since we are not interested in the parallel spinor $\varepsilon\in \Gamma(\mS_g)$ \emph{per se} but only in the geometric and topological consequences of its existence. In this context, the main point of equations \eqref{eq:parallelspinorgeneral} and the general formalism presented in \cite{Cortes:2019xmk} is to provide a framework to study spinorial differential equations without having to consider the spinorial geometry of the underlying pseudo-Riemannian manifold $(M,g)$. This point of view is motivated by the study of supersymmetric solutions supergravity, where $\varepsilon$ corresponds to the \emph{supersymmetry parameter}, an auxiliary object that \emph{a priori} bears no physical meaning and is only used to define mathematically the notion of \emph{supersymmetric solution}. 
\end{remark}

\noindent
We will say that a parabolic pair $(u,[l])$ is \emph{parallel}  if it corresponds to a parallel spinor field, that is, if it satisfies Equations \eqref{eq:parallelspinorgeneral} for a representative $l\in [l]$. The dual $u^{\sharp}\in \mathfrak{X}(M)$ of $u$ is a parallel vector field on $M$ which is usually referred to as the \emph{Dirac current} of $\varepsilon$ in the literature. The fact that the Dirac current of $\varepsilon$ is always null is specific (although not exclusive) of the type of irreducible real representation $\gamma\colon \Cl(3,1)\to \End(\mathbb{R}^4)$ that we have used to construct the spinor bundle $\mS_g$. Indeed, it can be seen (see for instance \cite[Proposition 3.22]{Cortes:2019xmk}) that the pseudo-norm of the Dirac current $u^{\sharp}$ is given by the pseudo-norm of $\varepsilon$ computed with respect to the \emph{admissible bilinear pairing} $\cB$ used to construct $u^{\sharp}$. Admissible bilinear pairings were classified in \cite{ACDP,AC}, from which it follows that in our case there exist two admissible pairings, both of them skew-symmetric. Therefore, $\cB(\varepsilon,\varepsilon) = 0$ automatically and $u^{\sharp}$ is always null. It should be noted that the spinorial polyforms associated to the same spinor field $\varepsilon$ through the two different admissible bilinear pairings are related by Hodge duality.

Proposition \ref{prop:parallelspinorgeneral} immediately implies that four-dimensional space-times admitting a parallel spinor field whose Dirac current is complete are particular instances of \emph{Brinkmann manifolds}, which are precisely defined as space-times equipped with a complete and parallel null vector field \cite{Brinkmann}. Other well-known properties of space-times admitting a parallel spinor field, such as the special form of their Ricci tensor, are also immediate consequences of Proposition \ref{prop:parallelspinorgeneral}, which provides an adequate global and coordinate-independent framework to study the geometry and topology of four-dimensional space-times admitting parallel spinors. In particular, such framework seems to be specially well-adapted to prove \emph{splitting theorems} in the spirit of \cite{SilvaFlores}, where the global geometry of Brinkmann space-times was investigated.  

Recall that if a pair $(u,l)$, with $l \in [l]$, satisfies equations \eqref{eq:parallelspinorgeneral} with respect to a given $\kappa\in \Omega^1(M)$ then any other representative $l^{\prime} = l + f u$ satisfies again equation \eqref{eq:parallelspinorgeneral} with respect to the same null one-form $u$ and a possibly different one-form $\kappa^{\prime}$ given by:
\begin{equation*}
\kappa^{\prime} = \kappa + \dd f\, .
\end{equation*}

\noindent
Rather than investigating the global geometry and topology of general space-times admitting parallel spinors, exploiting for instance the refined screen bundle construction that can be developed in the presence of a parabolic pair, we restrict the \emph{causality} of $(M,g)$ and we assume in the following that $(M,g)$ is globally hyperbolic as proposed in \cite{BaumLeistnerLischewski,LeistnerLischewski}.
 

\subsection{Globally hyperbolic $(M,g)$}
\label{subsec:GloballyHyperbolic}


\noindent
Let $(M,g)$ be a globally hyperbolic four-dimensional space-time. A celebrated theorem of Bernal and S\'anchez \cite{Bernal:2003jb,Bernal:2004gm} states that in this case $(M,g)$ has the following isometry type:
\begin{equation}
\label{eq:globahyp}
(M,g) = (\mathbb{R}\times \Sigma, -\lambda^2_t \dd t\otimes \dd t + h_t)\, ,
\end{equation}

\noindent
where $t$ is the canonical coordinate on $\mathbb{R}$, $\left\{\lambda_t\right\}_{t\in\mathbb{R}}$ is a smooth family of nowhere vanishing functions on $\Sigma$ and $\left\{ h_t\right\}_{t\in\mathbb{R}}$ is a family of complete Riemannian metrics on $\Sigma$. From now on we consider the identification \eqref{eq:globahyp} to be fixed. We set:
\begin{equation*}
\Sigma_t \eqdef \left\{ t\right\}\times \Sigma \hookrightarrow M\, , \qquad \Sigma \eqdef \left\{ 0\right\}\times \Sigma \hookrightarrow M\, ,
\end{equation*}

\noindent
and define:
\begin{equation*}
\mathfrak{t}_t = \lambda_t\, \dd t\, ,
\end{equation*}

\noindent
to be the outward-pointing unit time-like one-form orthogonal to $\Sigma_t$ for every $t\in \mathbb{R}$. We will consider $\Sigma\hookrightarrow M$, endowed with the induced Riemannian metric:
\begin{equation*}
h \eqdef h_0\vert_{T\Sigma\times T\Sigma}\, ,
\end{equation*}

\noindent
to be the Cauchy hypersurface of $(M,g)$. The \emph{shape operator} or scalar second fundamental form $\Theta_t$ of the embedded manifold $\Sigma_t\hookrightarrow M$ is defined in the usual way as follows:
\begin{equation*}
\Theta_t  \eqdef \nabla^g \mathfrak{t}_t\vert_{T\Sigma_t\times T\Sigma_t}\, ,
\end{equation*}

\noindent
This definition  can be seen to be equivalent to:
\begin{equation*}
\Theta_t = - \frac{1}{2\lambda_t} \partial_t h_t \in \Gamma(T^{\ast}\Sigma_t\odot T^{\ast}\Sigma_t)\, .
\end{equation*}

\noindent
Moreover, it can be seen that:
\begin{equation*}
\nabla^g \alpha \vert_{T\Sigma_t\times TM} = \nabla^{h_t} \alpha + \Theta_t(\alpha)\otimes \mathfrak{t}_t\, , \qquad \forall\,\,\alpha\in \Omega^1(\Sigma_t)\, ,
\end{equation*}

\noindent
where $\nabla^{h_t}$ denotes the Levi-Civita connection on $(\Sigma_t,h_t)$ and $\Theta_t(\alpha) := \Theta_t(\alpha^{\sharp_{h_t}})$ is by definition the evaluation of $\Theta_t$ on the metric dual of $\alpha$. Given a parabolic pair $(u,[l])$, we write:
\begin{equation*}
u = u^0_t\, \mathfrak{t}_t + u^{\perp}_t\, , \qquad l = l^0_t\, \mathfrak{t}_t + l^{\perp}_t\in [l]\, ,
\end{equation*}

\noindent
where the superscript $\perp$ denotes orthogonal projection to $T^{\ast}\Sigma_t$ and where we have defined:
\begin{equation*}
u^0_t = - g(u,\mathfrak{t}_t)\, , \qquad l^0_t = - g(l,\mathfrak{t}_t)\, ,
\end{equation*}
 
\noindent
Using the previous orthogonal splitting of $u$ and $l$ we can obtain an equivalent characterization of parallel spinors on a globally hyperbolic space-time in terms of \emph{tensor flow equations} on $\Sigma$.

\begin{lemma}\cite[Lemma 3.1]{BaumLeistnerLischewski}
\label{lemma:projectionnablau}
Let $u\in \Omega^1(M)$ be a null one-form on the globally hyperbolic manifold \eqref{eq:globahyp}. Then, $\nabla^g u  = 0$ if and only if:
\begin{equation*}
(\nabla^g_{v_1} u)(v_2) = 0\, ,
\end{equation*}

\noindent
for every $v_1\in \mathfrak{X}(M)$ and every $v_2\in \mathfrak{X}(\Sigma_t)$.
\end{lemma}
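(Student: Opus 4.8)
The plan is to prove the two implications separately: the forward one is immediate, while the converse rests on the null condition $g(u,u) = 0$ together with the metric compatibility of $\nabla^g$. If $\nabla^g u = 0$ then of course $(\nabla^g_{v_1}u)(v_2) = 0$ for all $v_1,v_2$, so only the converse requires an argument. The essential difficulty there is that the hypothesis controls $(\nabla^g_{v_1}u)(v_2)$ only for $v_2$ tangent to the slices $\Sigma_t$, whereas the conclusion $\nabla^g u = 0$ additionally requires the component of $\nabla^g_{v_1}u$ along the timelike normal $\mathfrak{t}_t^{\sharp}$ to vanish. Recovering this last component is exactly where the nullity of $u$ will be used.

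First I would differentiate the identity $g(u,u) = 0$. Since $\nabla^g g = 0$, for every $v_1\in\mathfrak{X}(M)$ we get $0 = v_1\big(g(u,u)\big) = 2\,(\nabla^g_{v_1}u)(u^{\sharp})$, so that $u^{\sharp}$ lies in the kernel of the covector $\nabla^g_{v_1}u$. Next I would use the orthogonal splitting $TM\vert_{\Sigma_t} = T\Sigma_t\oplus \mathbb{R}\,\mathfrak{t}_t^{\sharp}$ to write $u^{\sharp} = u^0_t\,\mathfrak{t}_t^{\sharp} + (u^{\perp}_t)^{\sharp}$ with $(u^{\perp}_t)^{\sharp}$ tangent to $\Sigma_t$. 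Substituting this decomposition into the previous relation and invoking the hypothesis $(\nabla^g_{v_1}u)\big((u^{\perp}_t)^{\sharp}\big) = 0$, which is legitimate since $(u^{\perp}_t)^{\sharp}\in\mathfrak{X}(\Sigma_t)$ and the expression is tensorial in its vector argument, yields $u^0_t\,(\nabla^g_{v_1}u)(\mathfrak{t}_t^{\sharp}) = 0$ for every $v_1$.

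It then remains to cancel the scalar factor $u^0_t$, and here I would use the null condition a second time. Writing $0 = g(u,u) = -(u^0_t)^2 + g\big((u^{\perp}_t)^{\sharp},(u^{\perp}_t)^{\sharp}\big)$ and using that $h_t$ is positive definite, one sees that $u^0_t = 0$ holds at a point if and only if $u^{\perp}_t$, and hence $u^{\sharp}$, vanishes there. Consequently, on the open set where $u\neq 0$ we have $u^0_t\neq 0$, so $(\nabla^g_{v_1}u)(\mathfrak{t}_t^{\sharp}) = 0$; combined with the hypothesis and the orthogonal splitting of $TM$, this gives $(\nabla^g_{v_1}u)(v_2) = 0$ for all $v_2\in\mathfrak{X}(M)$, that is, $\nabla^g u = 0$ on this set.

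I expect the only genuinely delicate point to be the behaviour of $\nabla^g u$ on the zero locus of $u$, if $u$ happens to vanish somewhere: on the interior of $\{u = 0\}$ one has $\nabla^g u = 0$ trivially, and since the topological boundary of the closed set $\{u = 0\}$ has empty interior, the open set on which $\nabla^g u = 0$ is dense, so continuity of the smooth section $\nabla^g u$ propagates the vanishing to all of $M$. In the setting relevant to us this subtlety does not even arise, since $u$ is nowhere vanishing as part of a parabolic pair, and the argument of the third paragraph already delivers the claim.
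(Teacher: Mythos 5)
Your proof is correct and follows essentially the same route as the paper's: differentiating the null condition $g(u,u)=0$ to get $g(\nabla^g u,u)=0$, splitting $u = u^0_t\,\mathfrak{t}_t + u^{\perp}_t$, using the hypothesis to kill the spatial term, and then cancelling the factor $u^0_t$, which is nonzero wherever $u\neq 0$ precisely because $u$ is null and $h_t$ is Riemannian. The only difference is that you also treat the zero locus of $u$ via a density argument, a point the paper's one-line proof silently skips (and which is vacuous in the intended application, where $u$ is nowhere vanishing).
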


\begin{proof}
We compute:
\begin{equation*}
0 = g(\nabla^g u , u) = u^0_t\, g(\nabla^g u , \mathfrak{t}_t) +  g(\nabla^g u , u^{\perp}_t) = u^0_t\, g(\nabla^g u , \mathfrak{t}_t)\, ,
\end{equation*}

\noindent
where we have used that the spatial projection of $\nabla^g u$ is zero by assumption.
\end{proof}

\begin{lemma} 
\label{lemma:globhyperspinor}
A globally hyperbolic four-manifold $(M,g)  = (\mathbb{R}\times \Sigma, -\lambda^2_t \dd t\otimes \dd t + h_t)$ admits a parabolic pair, and hence a parallel spinor field, if and only if there exists a family of orthogonal one-forms $\left\{ u^{\perp}_t, l^{\perp}_t \right\}_{t\in \mathbb{R}}$ on $\Sigma$ satisfying the following equations:
\begin{eqnarray}
\label{eq:globhyperspinorI}
& \partial_t u^{\perp}_t + \lambda_t \Theta_t(u^{\perp}_t) = u^0_t\dd \lambda_t\, , \qquad  u^0_t \partial_t l^{\perp}_t +\lambda_t  u^0_t \Theta_t(l^{\perp}_t) + \dd \lambda_t (l^{\perp}_t) u^{\perp}_t = 0\, ,   \\
\label{eq:globhyperspinorII}
& \nabla^{h_t} u^{\perp}_t + u^0_t \Theta_t = 0\, , \qquad u^0_t \nabla^{h_t} l^{\perp}_t = \Theta_t(l^{\perp}_t)\otimes u^{\perp}_t\, , 
\end{eqnarray}

\noindent
as well as:
\begin{equation}
\label{eq:restrictionul}
(u^0_t)^2 = \vert u^{\perp}_t \vert^2_{h_t}\, , \qquad \vert l^{\perp}_t\vert^2_{h_t} = 1\, ,
\end{equation}

\noindent
In particular, $\partial_t u^0_t = \dd \lambda_t(u^{\perp}_t)$ and $\dd u^0_t + \Theta(u^{\perp}_t) = 0$. If equations \eqref{eq:globhyperspinorI} and \eqref{eq:globhyperspinorII} are satisfied, the corresponding parabolic pair $(u,[l])$ is given by:
\begin{equation*}
u = u^0_t \mathfrak{t}_t +  u_t^{\perp}\, ,  \qquad  [l] = [ l_t^{\perp}]\, .
\end{equation*}

\noindent
where $\vert u_t^{\perp}\vert^2_{h_t} = h_t(u^{\perp}_t , u^{\perp}_t)$ and $\vert l_t^{\perp}\vert^2_{h_t} = h_t(l^{\perp}_t , l^{\perp}_t)$.
\end{lemma}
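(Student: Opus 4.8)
The plan is to expand, slice by slice, the two conditions that Proposition~\ref{prop:parallelspinorgeneral} attaches to a parallel spinor, namely $\nabla^g u = 0$ and $\nabla^g l = \kappa\otimes u$, together with the algebraic conditions $g(u,u)=0$, $g(l,u)=0$ and $g(l,l)=1$ defining a parabolic pair, in the orthogonal splitting $u = u^0_t\mathfrak{t}_t + u^{\perp}_t$, $l = l^0_t\mathfrak{t}_t + l^{\perp}_t$. First I would fix a convenient representative of $[l]$: since $u$ is nowhere vanishing and null, the relation $(u^0_t)^2 = \vert u^{\perp}_t\vert^2_{h_t}$ forces $u^0_t$ to be nowhere zero, so the residual freedom $l\mapsto l + f u$ can be used to impose $l^0_t = 0$ identically. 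With this normalization, expanding $g(u,u)=0$, $g(l,l)=1$ and $g(l,u)=0$ in the splitting $g = -\mathfrak{t}_t\otimes\mathfrak{t}_t + h_t$ (with $g(\mathfrak{t}_t,\mathfrak{t}_t) = -1$) gives exactly~\eqref{eq:restrictionul} together with the orthogonality $h_t(u^{\perp}_t,l^{\perp}_t)=0$, which is the content of the phrase \emph{family of orthogonal one-forms}.

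For the equation $\nabla^g u = 0$ I would invoke Lemma~\ref{lemma:projectionnablau}, which reduces its vanishing to the vanishing of those components whose \emph{second} argument is tangent to $\Sigma_t$; this is precisely the device that spares us from checking the normal components. Splitting the \emph{first} argument into a spatial direction and $\partial_t$ then produces the two equations for $u$. For a spatial first argument I would feed each summand of $u = u^0_t\mathfrak{t}_t + u^{\perp}_t$ into the decomposition formula $\nabla^g\alpha\vert_{T\Sigma_t\times TM} = \nabla^{h_t}\alpha + \Theta_t(\alpha)\otimes\mathfrak{t}_t$; the term $u^0_t\mathfrak{t}_t$ contributes $u^0_t\Theta_t$ through $\nabla^g\mathfrak{t}_t\vert_{T\Sigma_t\times T\Sigma_t}=\Theta_t$, and one lands on the first equation of~\eqref{eq:globhyperspinorII}. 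For the first argument equal to $\partial_t$ I would compute $\nabla^g_{\partial_t}u$ directly; this is where the Christoffel symbols of the warped metric $g = -\lambda_t^2\,\dd t\otimes\dd t + h_t$ enter, converting the covariant $t$-derivative into the coordinate derivative $\partial_t u^{\perp}_t$ plus the corrections $\lambda_t\Theta_t(u^{\perp}_t)$ and $-u^0_t\dd\lambda_t$, yielding the first equation of~\eqref{eq:globhyperspinorI}.

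The one-form $l$ is treated the same way, but now the right-hand side $\kappa\otimes u$ must be tracked, and the key point is that $\kappa$ is \emph{determined} by the equation once the representative is fixed. Contracting $\nabla^g_v l = \kappa(v)\,u$ with $\mathfrak{t}_t^\sharp$, using $u(\mathfrak{t}_t^\sharp) = -u^0_t$ together with $(\nabla^g_v l)(\mathfrak{t}_t^\sharp) = -\Theta_t(l^{\perp}_t)(v)$ for spatial $v$ (which follows from $l^0_t = 0$ and from $\nabla^g_v\mathfrak{t}_t^\sharp$ being tangent to $\Sigma_t$ and $h_t$-dual to $\Theta_t(v,\cdot)$), identifies the spatial part of $\kappa$ as $(u^0_t)^{-1}\Theta_t(l^{\perp}_t)$. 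Substituting this into the spatial-spatial projection $\nabla^{h_t}l^{\perp}_t = \kappa^{\perp}_t\otimes u^{\perp}_t$ produces the second equation of~\eqref{eq:globhyperspinorII}, the factor $u^0_t$ originating exactly in this inversion; the $\partial_t$-projection, carrying the analogously determined value of $\kappa(\partial_t)$, yields the second equation of~\eqref{eq:globhyperspinorI}. The two \emph{in particular} identities are then read off the normal-second-argument components of $\nabla^g u = 0$: differentiating $u^0_t = -g(u,\mathfrak{t}_t)$ along a spatial $v$ and using $\nabla^g u = 0$ gives $\dd u^0_t + \Theta_t(u^{\perp}_t) = 0$, while differentiating the constraint $(u^0_t)^2 = \vert u^{\perp}_t\vert^2_{h_t}$ in $t$ and inserting the first equation of~\eqref{eq:globhyperspinorI} gives $\partial_t u^0_t = \dd\lambda_t(u^{\perp}_t)$.

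The converse runs the same computations backwards: given families $\{u^{\perp}_t,l^{\perp}_t\}$ solving \eqref{eq:globhyperspinorI}--\eqref{eq:restrictionul}, I would set $u^0_t \defeq \vert u^{\perp}_t\vert_{h_t}$ (determined up to a sign, fixed by the time-orientation), $u \defeq u^0_t\mathfrak{t}_t + u^{\perp}_t$ and $l \defeq l^{\perp}_t$, read off the parabolic-pair constraints from~\eqref{eq:restrictionul} and the orthogonality, confirm $\nabla^g u = 0$ through Lemma~\ref{lemma:projectionnablau} (only the spatial-second-argument components, which are exactly the two flow equations for $u$, require checking), and verify $\nabla^g l = \kappa\otimes u$ with $\kappa$ reconstructed as above, its normal-second-argument component being automatically consistent because $g(l,l)$ and $g(l,u)$ are constant along $M$. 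I expect the genuine obstacle to be the time-spatial computation $\nabla^g_{\partial_t}(\cdot)\vert_{T\Sigma_t}$: unlike the spatial directions it is \emph{not} covered by the quoted decomposition formula and requires handling the full $t$-dependence of $h_t$ through the Christoffel symbols mixing $\partial_t$ with the slice --- exactly the step that manufactures the lapse-gradient term $\dd\lambda_t$ and the $\lambda_t\Theta_t$ corrections in~\eqref{eq:globhyperspinorI}.
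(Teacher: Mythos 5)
Your proposal is correct and follows essentially the same route as the paper's own proof: fix the purely spatial representative of $[l]$ (justified by $u^0_t\neq 0$), invoke Lemma \ref{lemma:projectionnablau} to discard the normal-second-argument components of $\nabla^g u$, project the equations $\nabla^g u =0$ and $\nabla^g l = \kappa\otimes u$ onto spatial and time directions, solve for $\kappa(\partial_t)$ and $\kappa^{\perp}_t$, and obtain the converse by running the construction backwards. The only cosmetic difference is that you derive $\dd u^0_t + \Theta_t(u^{\perp}_t)=0$ by differentiating $u^0_t = -g(u,\mathfrak{t}_t)$ along spatial directions, whereas the paper extracts both \emph{in particular} identities from the time and exterior derivatives of the constraint $(u^0_t)^2 = \vert u^{\perp}_t\vert^2_{h_t}$ --- an equivalent manipulation.
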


\begin{proof}
Let $(u,[l])$ be a parabolic pair satisfying equations \eqref{eq:parallelspinorgeneral}. Write $u = u^0_t\, \mathfrak{t}_t + u^{\perp}_t$. We can find a representative $l\in [l]$ such that:
\begin{equation*}
l =  l^{\perp}_t\in \Omega^1(\Sigma_t)\, , \qquad t\in \mathbb{R}\, ,
\end{equation*}

\noindent
that is, with $l$ purely spatial. Using this representative together with Lemma \ref{lemma:projectionnablau}, it follows that equations \eqref{eq:parallelspinorgeneral} are equivalent to:
\begin{equation*}
\nabla^g_{\partial_t} u\vert_{T\Sigma_t} = 0\, , \quad \nabla^g_{v_t} u\vert_{T\Sigma_t} = 0\, , \quad \nabla^g_{\partial_t} l^{\perp}_t = \kappa(\partial_t)\, u\, , \quad \nabla^g_{v_t} l^{\perp}_t = \kappa(v_t)\, u\, , \quad \forall \, \, v_t \in T\Sigma_t\, .
\end{equation*}

\noindent
Denote by $\kappa^{\perp}_t$ the spatial projection of $\kappa \in \Omega^1(M)$. We compute:
\begin{eqnarray*}
&\nabla^g_{\partial_t} u\vert_{T\Sigma_t} = \partial_t u^{\perp}_t + \lambda_t \Theta_t(u^{\perp}_t) - u^0_t\dd \lambda_t\, , \quad \nabla^g u\vert_{T\Sigma_t\times T\Sigma_t} = \nabla^{h_t} u^{\perp}_t + u^0_t \Theta_t\, , \\
&\nabla^g_{\partial_t} l^{\perp}_t =   \partial_t l^{\perp}_t - \dd \lambda_t (l^{\perp}_t) \mathfrak{t}_t +\lambda_t \Theta_t(l^{\perp}_t) = \kappa(\partial_t) (u^0_t \mathfrak{t}_t + u^{\perp}_t)\, ,\\
&\nabla^g l^{\perp}_t \vert_{T\Sigma_t\times TM} = \nabla^{h_t} l^{\perp}_t +\Theta_t(l^{\perp}_t)\otimes \mathfrak{t}_t  = \kappa^{\perp}_t\otimes  (u^0_t \mathfrak{t}_t + u^{\perp}_t)\, .
\end{eqnarray*}

\noindent
Isolating $\kappa$ in the previous equations we obtain:
\begin{equation*}
\kappa(\partial_t) = - \frac{1}{u^0_t} \dd \lambda_t(l^{\perp}_t)\, , \qquad \kappa^{\perp}_t = \frac{1}{u^0_t} \Theta_t(l^{\perp}_t)\, ,
\end{equation*}

\noindent
Plugging these equations back into the expressions for the covariant derivatives of $l^{\perp}_t$ we obtain all equations in \eqref{eq:globhyperspinorI} and \eqref{eq:globhyperspinorII}. The fact that these equations imply $\partial_t u^0_t = \dd \lambda_t(u^{\perp}_t)$ and $\dd u^0_t + \Theta(u^{\perp}_t) = 0$ follows now by respectively manipulating the time and exterior derivatives of $(u^0_t)^2 = \vert u^{\perp}_t \vert^2_{h_t}$. The converse follows directly by construction and hence we conclude.
\end{proof}

\noindent
Summarizing, equations \eqref{eq:globhyperspinorI}, \eqref{eq:globhyperspinorII} and \eqref{eq:restrictionul} contain the necessary and sufficient conditions for a four manifold $M$ to admit a parallel spinor field with respect to a globally hyperbolic metric in the form of flow equations on $\Sigma$, to which we will refer as the \emph{parallel spinor flow equations}. 

\begin{definition}
A \emph{parallel spinor flow} on a direct product manifold $M=\mathbb{R}\times \Sigma$, where $\Sigma$ is an oriented three-manifold, is a tuple:
\begin{equation*}
	(\left\{ \lambda_t \right\}_{t\in \mathbb{R}} , \left\{ h_t \right\}_{t\in \mathbb{R}},\left\{ u^0_t\right\}_{t\in \mathbb{R}},\left\{ u^{\perp}_t\right\}_{t\in \mathbb{R}} ,\left\{ l^{\perp}_t\right\}_{t\in \mathbb{R}})\, ,
\end{equation*}

\noindent
satisfying equations \eqref{eq:globhyperspinorI}, \eqref{eq:globhyperspinorII} and \eqref{eq:restrictionul}.
\end{definition}

\begin{remark}
In the previous discussion we have used informally the notion of \emph{family of tensors parametrized by} $\mathbb{R}$. This notion can be given a rigorous meaning as follows. A family of, say, one-forms $\left\{ \alpha_t\right\}_{t\in\mathbb{R}}$ on $\Sigma$ is by definition a smooth section $\alpha \colon \mathbb{R}\times \Sigma \to \mathrm{p}^{\ast}(T^{\ast}\Sigma)$ of the pull-back of $T^{\ast}\Sigma$ by the canonical projection $\mathrm{p}\colon \mathbb{R}\times \Sigma \to \Sigma$. Families of other types of tensors are defined similarly.
\end{remark}


\subsection{The constraint equations}


Using Lemma \ref{lemma:globhyperspinor} together with the resolution of the initial value problem for a parallel null spinor presented in \cite{BaumLeistnerLischewski,LeistnerLischewski,Lischewski:2015cya}, see also \cite{Ammann:2019xzb,BaumLeistnerBook}, we obtain the following characterization of parallel spinors on globally hyperbolic Lorentzian four-manifolds.

\begin{proposition}
\label{prop:spinoronrflat}
A globally hyperbolic four-manifold $(M,g)$ with Cauchy surface $\Sigma \hookrightarrow M$ and second fundamental form $\Theta\in \Gamma(T^{\ast}\Sigma\odot T^{\ast}\Sigma)$ admits a parallel spinor $\varepsilon\in\Gamma(\mathrm{S}_g)$ if and only if $\Sigma$ admits a pair of unit length orthonormal one-forms $(e_u,e_l)$ on $\Sigma$ satisfying the following equations:
\begin{equation}
\label{eq:spinoronSigma}
\nabla^h e_u + \Theta = \Theta(e_u)\otimes e_u \, , \quad   \nabla^h e_l= \Theta(e_l)\otimes e_u\, , \quad [\Theta(e_u)] = 0 \in H^{1}(\Sigma,\mathbb{R})\, ,
\end{equation}

\noindent
where $h$ is the complete Riemannian metric induced by $g$ on $\Sigma$. Furthermore, $(M,g)$ is Ricci-flat only if $h$ satisfies in addition the following equations:
\begin{equation}
\label{eq:RicciflatonSigma}
\mathrm{R}_h = \vert\Theta\vert^2_h - \mathrm{Tr}_h(\Theta)^2\, ,\qquad  \dd \mathrm{Tr}_h(\Theta) = \mathrm{div}_h(\Theta)\, ,
\end{equation}

\noindent
on $\Sigma$.
\end{proposition}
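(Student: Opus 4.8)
The plan is to split the proof into the characterization \eqref{eq:spinoronSigma} and the Ricci-flat refinement \eqref{eq:RicciflatonSigma}. The first I would obtain by restricting the parallel spinor flow equations of Lemma \ref{lemma:globhyperspinor} to the initial slice $\Sigma=\Sigma_0$ and invoking the resolution of the initial value problem for a parallel null spinor; the second I would read off from the Gauss and Codazzi equations of the spacelike embedding $\Sigma\hookrightarrow M$.

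For necessity in \eqref{eq:spinoronSigma} I would start from a parallel spinor on $(M,g)$, equivalently (Lemma \ref{lemma:globhyperspinor}) a parabolic pair whose spatial data $(u^0_t,u^{\perp}_t,l^{\perp}_t)$ solves \eqref{eq:globhyperspinorI}, \eqref{eq:globhyperspinorII} and \eqref{eq:restrictionul}, and evaluate everything at $t=0$, writing $h=h_0$, $\Theta=\Theta_0$, $u^0=u^0_0$, and so on. Since $u$ is nowhere vanishing and null, \eqref{eq:restrictionul} forces $(u^0)^2=\vert u^{\perp}\vert^2_h$, whence $u^0$ is nowhere zero and
\begin{equation*}
e_u \defeq \frac{1}{u^0}\, u^{\perp}\, , \qquad e_l \defeq l^{\perp}
\end{equation*}
are unit one-forms; orthogonality $h(e_u,e_l)=0$ follows from $g(u,l)=0$ once $l=l^{\perp}$ is chosen purely spatial. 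Substituting $u^{\perp}=u^0 e_u$ and $l^{\perp}=e_l$ into \eqref{eq:globhyperspinorII}, applying the Leibniz rule for $\nabla^h$, and eliminating $\dd u^0$ through the identity $\dd u^0=-\Theta(u^{\perp})=-u^0\,\Theta(e_u)$ recorded in Lemma \ref{lemma:globhyperspinor}, yields precisely $\nabla^h e_u+\Theta=\Theta(e_u)\otimes e_u$ and $\nabla^h e_l=\Theta(e_l)\otimes e_u$. The same identity, rewritten as $\Theta(e_u)=-\,\dd\log\vert u^0\vert$, exhibits $\Theta(e_u)$ as an exact one-form, giving $[\Theta(e_u)]=0\in H^1(\Sigma,\RR)$.

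For sufficiency I would run this computation backwards: given $(e_u,e_l)$ solving \eqref{eq:spinoronSigma}, the vanishing of $[\Theta(e_u)]$ allows me to write $\Theta(e_u)=-\,\dd\varphi$ and set $u^0=e^{\varphi}$, $u^{\perp}=u^0 e_u$, $l^{\perp}=e_l$; a direct check shows this tuple satisfies the non-evolutionary constraints \eqref{eq:globhyperspinorII} and \eqref{eq:restrictionul} on $\Sigma$, so it constitutes allowed initial data for the parallel spinor flow. The resolution of the Cauchy problem for a parallel null spinor in \cite{BaumLeistnerLischewski,LeistnerLischewski,Lischewski:2015cya} then evolves this data to a globally hyperbolic development carrying a parallel spinor and inducing $(\Sigma,h,\Theta)$, which by Lemma \ref{lemma:globhyperspinor} is the desired parallel spinor. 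The main obstacle, and the step I would defer entirely to the cited works, is precisely this converse: it is here that one needs the propagation of the constraints off $\Sigma$ (equivalently, the curvature integrability annihilating the spinor) together with the identification of the development with the given $(M,g)$, neither of which follows from the pointwise algebra on $\Sigma$ alone.

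Finally, for the Ricci-flat refinement --- an implication only --- I would contract the Gauss equation of the spacelike embedding over a local $h$-orthonormal frame of $T\Sigma$ to obtain
\begin{equation*}
\mathrm{R}_h = \mathrm{R}_g + 2\,\mathrm{Ric}_g(\mathfrak{n},\mathfrak{n}) + \vert\Theta\vert^2_h - \mathrm{Tr}_h(\Theta)^2\, ,
\end{equation*}
with $\mathfrak{n}=\mathfrak{t}^{\sharp}$ the unit timelike normal, and trace the Codazzi equation to express $\mathrm{div}_h(\Theta)-\dd\,\mathrm{Tr}_h(\Theta)$ in terms of $\mathrm{Ric}_g(\mathfrak{n},\cdot)\vert_{T\Sigma}$. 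Imposing $\mathrm{Ric}_g=0$ (hence $\mathrm{R}_g=0$) makes both right-hand sides collapse to exactly \eqref{eq:RicciflatonSigma}. The only point requiring care is to track the sign contributed by the timelike normal $g(\mathfrak{n},\mathfrak{n})=-1$ and the convention $\Theta=\nabla^g\mathfrak{t}\vert_{T\Sigma\times T\Sigma}$ fixed above; but since $\Theta$ enters the Gauss identity quadratically and the Codazzi identity on both sides, the stated signs in \eqref{eq:RicciflatonSigma} are insensitive to the orientation of $\mathfrak{n}$.
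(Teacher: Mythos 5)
Your proposal is correct and, for the main equivalence, follows essentially the same route as the paper: restrict the flow equations of Lemma \ref{lemma:globhyperspinor} to $t=0$, set $e_u = u^{\perp}/u^0$ and $e_l = l^{\perp}$, eliminate $\dd u^0$ via the identity $\dd u^0 + \Theta(u^{\perp}) = 0$ (the paper re-derives this by differentiating $(u^0)^2 = \vert u^{\perp}\vert^2_h$, but it is the same step), and for the converse exponentiate a primitive of $-\Theta(e_u)$ to build the initial data $u^0 = e^{\mathfrak{f}}$, $u^{\perp} = e^{\mathfrak{f}}e_u$, $l^{\perp} = e_l$ and invoke the well-posedness results of \cite{BaumLeistnerLischewski,LeistnerLischewski,Lischewski:2015cya}. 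Your explicit flagging of the remaining subtlety --- that the cited results produce \emph{some} Lorentzian development carrying a parallel spinor, and that identifying this development with the given $(M,g)$ is deferred to those references --- is a precision the paper's own proof passes over in silence, so it is a merit rather than a gap. The one place you genuinely diverge is the Ricci-flat implication: the paper disposes of it by citing the resolution of the vacuum Einstein initial value problem \cite{YvonneGeroch,Yvonne}, whereas you derive the Hamiltonian and momentum constraints directly from the twice-traced Gauss equation and the traced Codazzi equation of the spacelike embedding. This is precisely the classical content behind the citation; your contracted Gauss identity and the observation that both constraints are insensitive to the sign of the unit normal are correct in the paper's conventions ($g(\mathfrak{n},\mathfrak{n}) = -1$, $\Theta = \nabla^g\mathfrak{t}\vert_{T\Sigma\times T\Sigma}$), and this route has the small advantage of making the ``only if'' part of the statement self-contained rather than deferred to the literature.
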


\begin{proof}
The constraint equations for the evolution problem posed by equations \eqref{eq:parallelspinorgeneral} on the globally hyperbolic four-manifold $(M,g) = (\mathbb{R}\times \Sigma , -\lambda^2_t \dd t\otimes \dd t + h_t)$ are obtained by restriction of equations \eqref{eq:globhyperspinorII} together with equations \eqref{eq:restrictionul} to the Cauchy surface $\Sigma \hookrightarrow M$. The restriction is given by:
\begin{equation}
\label{eq:cauchyul}
(u^0)^2 = \vert u^{\perp} \vert^2_{h}\, , \qquad \vert l^{\perp}\vert^2_{h} = 1\, , \qquad \nabla^h u^{\perp} + u^0 \Theta = 0\, , \qquad u^0\,\nabla^{h}l^{\perp} = \Theta(l^{\perp})\otimes u^{\perp}\, ,
\end{equation}

\noindent
where we have set:
\begin{equation*}
u^0 \eqdef u^0_0\, , \qquad u^{\perp}\eqdef u^{\perp}_0\, , \qquad l^{\perp} \eqdef l^{\perp}_0\, , \qquad \Theta \eqdef \Theta_0\, .
\end{equation*}

\noindent
Defining now:
\begin{equation*}
e_u \eqdef \frac{u^{\perp}}{u^0}\, ,\qquad e_l \eqdef l^{\perp}\, , 
\end{equation*}

\noindent
the third and fourth equations in \eqref{eq:cauchyul} are equivalently to:
\begin{equation*}
\label{eq:cauchyul2}
\nabla^h e_u + \dd \log(u^0)\otimes e_u + \Theta = 0\, , \qquad  \nabla^{h}l^{\perp} = \Theta(l^{\perp})\otimes e_u\, .
\end{equation*}

\noindent
On the other hand, taking the exterior derivative of the first equation in \eqref{eq:cauchyul} we obtain:
\begin{equation*}
\dd \log(u^0) + \Theta(e_u) = 0\, ,
\end{equation*}

\noindent
which implies the third equation in \eqref{eq:spinoronSigma}. Combining this equation with equations \eqref{eq:cauchyul2} we obtain the first two equations in \eqref{eq:spinoronSigma}. Clearly we have $\vert e_u\vert_h^2 = \vert e_l\vert_h^2 = 1$ by construction. Conversely, assume that a pair of orthonormal one-forms $(e_u,e_l)$ satisfies equations \eqref{eq:spinoronSigma} for a given Riemannian metric $h$ and tensor $\Theta$. Write:
\begin{equation}
\label{eq:exactthetaeu}
d\mathfrak{f} = - \Theta(e_u)\, ,
\end{equation}

\noindent
for a function $\mathfrak{f}\in C^{\infty}(\Sigma)$, which exists since $[\Theta(e_u)] = 0\in H^1(\Sigma,\mathbb{R})$. Then, the triple:
\begin{equation*}
u^0 = e^{\mathfrak{f}}\, , \qquad u^{\perp} = e^{\mathfrak{f}} e_u\, , \qquad l^{\perp} = e_l\, ,
\end{equation*}

\noindent
is by construction a solution equations \eqref{eq:cauchyul}. Equation \eqref{eq:exactthetaeu} determines $u^0$ modulo constant rescalings, in agreement with the fact that if $(u,[l])$ is a parallel parabolic pair then so is $(c\,u,[l])$ for every $c\in \mathbb{R}^{\ast}$. Conversely, since the initial value problem of a parallel null spinor is well-posed by the results of \cite{BaumLeistnerLischewski,LeistnerLischewski,Lischewski:2015cya}, and a parallel spinor is equivalent to a parallel parabolic pair (see Proposition \ref{prop:parallelspinorgeneral}), every solution to \eqref{eq:spinoronSigma} admits a Lorentzian development carrying a parallel spinor and containing as Cauchy surface the submanifold $(\Sigma,h)$ with associated second fundamental form $\Theta$. The statement regarding the Ricci-flat condition follows from the celebrated resolution of the initial value problem of a Ricci-flat Lorentzian four-manifold, see \cite{YvonneGeroch,Yvonne}.
\end{proof}

\begin{remark}
The constraint equations corresponding to a parallel spinor on a globally hyperbolic Lorentzian manifold are well known to correspond to the \emph{imaginary} generalized Killing spinor equation with respect to the shape operator of the Cauchy hypersurface \cite{BarGauduchonMoroianu,BaumMuller,LeistnerLischewski}. Such type of characterization also applies to our problem, however we do not need to consider it thanks to the description of parallel spinors as parabolic pairs provided in Proposition \ref{prop:parallelspinorgeneral}.  
\end{remark}

\noindent
In Reference \cite{BaumMuller} the authors study imaginary \emph{Codazzi spinors}, which correspond to the constraint equations of a parallel spinor on a globally hyperbolic Lorentzian manifold of constant curvature. More recently, Reference \cite{LeistnerLischewski} determines the local isometry type of the Cauchy surface of any Lorentzian manifold carrying a parallel spinor, showing that, in the four-dimensional case, corresponds to a certain warped product involving a family of two-dimensional flat metrics. Therefore, the results of this article can be considered as a continuation of those in Op. Cit. in the specific case of four Lorentzian dimensions. The system of equations \eqref{eq:RicciflatonSigma} corresponds with the celebrated \emph{constraint equations} of the initial value problem for Ricci-flat globally hyperbolic Lorentzian four manifolds and consequently has been intensively and extensively studied in the literature. The first equation in \eqref{eq:RicciflatonSigma} is usually called the \emph{Hamiltonian constraint} whereas the second equation in \eqref{eq:RicciflatonSigma} is usually called the \emph{momentum constraint}. 

\begin{remark}
Equations \eqref{eq:spinoronSigma} contain a \emph{cohomological} condition, namely $[\Theta(e_u)] = 0$ which is automatically satisfied if $H^1(\Sigma,\mathbb{R}) = 0$. However, it may restrict the discrete quotients to which a given solution descends, since an exact one-form on $\Sigma$ may descend to a closed non-exact one-form on certain quotients of $\Sigma$. 
\end{remark}


\subsection{Parallel Cauchy pairs on $\Sigma$}


The variables of equations \eqref{eq:spinoronSigma} corresponding to the restriction of a parallel spinor field to the Cauchy surface consist of a pair of orthonormal one-forms $(e_u,e_l)$ on $(\Sigma,h)$. However, in order to study the geometry and topology of Cauchy surfaces on Lorentzian four-manifolds equipped with a parallel spinor it is convienent to consider also the Riemannian metric $h$ and the symmetric $(2,0)$ tensor $\Theta$ as variables of \eqref{eq:spinoronSigma}. We will refer to a symmetric tensor $\Theta \in \Gamma(\mathrm{S}^2 T^{\ast}\Sigma)$ on $\Sigma$ simply as a \emph{shape operator}. Following standard usage in the literature, if the shape operator of a given solution $(h,\Theta,e_u,e_l)$ satisfies:
\begin{equation*}
\nabla^h\Theta \in \Gamma(\mathrm{S}^3 T^{\ast}\Sigma)\, ,
\end{equation*}

\noindent
we will say that $\Theta$ is a \emph{Codazzi tensor} on $\Sigma$. More explicitly, a shape operator $\Theta$ is a Codazzi tensor if and only if:
\begin{equation*}
(\nabla^h_{v_1}\Theta)(v_2 , v_3) = (\nabla^h_{v_2}\Theta)(v_1 , v_3)\, ,
\end{equation*}

\noindent
for every $v_1 , v_2 , v_3 \in \mathfrak{X}(\Sigma)$. Denote by $\mathrm{F}(\Sigma)$ the principal bundle of oriented coframes of $\Sigma$. In order to proceed further we will first rewrite equations \eqref{eq:spinoronSigma} in a more transparent geometric form.

\begin{lemma}
There is a canonical one to one correspondence between tuples $(h,\Theta,e_u,e_l)$ as described above and pairs $( \mathfrak{e}, \Theta)$, where $\mathfrak{e}\colon \Sigma \to \mathrm{F}(\Sigma)$ is a section of $\mathrm{F}(\Sigma)$ and $\Theta$ is a shape operator.
\end{lemma}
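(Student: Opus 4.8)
The plan is to set up an explicit bijection and verify it respects all the structure. Recall the data of a tuple $(h,\Theta,e_u,e_l)$: a Riemannian metric $h$ on $\Sigma$, a symmetric $(2,0)$-tensor $\Theta$, and a pair of $h$-orthonormal one-forms $(e_u,e_l)$, that is $|e_u|^2_h = |e_l|^2_h = 1$ and $h(e_u,e_l)=0$. On the other side, the data of a pair $(\mathfrak{e},\Theta)$ consists of a section $\mathfrak{e}\colon \Sigma \to \mathrm{F}(\Sigma)$ of the oriented coframe bundle, which is precisely a global oriented coframe $\mathfrak{e} = (e_u,e_l,e_n)$ of three pointwise linearly independent one-forms, together with the same shape operator $\Theta$. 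Since $\Sigma$ is oriented and three-dimensional, the forward map and its inverse are what I would construct.

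First I would define the map from pairs to tuples. Given $(\mathfrak{e},\Theta)$ with $\mathfrak{e}=(e_1,e_2,e_3)$, declare $h \eqdef \sum_{i} e_i \otimes e_i$ to be the unique metric making $\mathfrak{e}$ orthonormal, and keep $\Theta$ unchanged. Setting $e_u \eqdef e_1$ and $e_l \eqdef e_2$, one reads off immediately that $(e_u,e_l)$ are $h$-orthonormal, so the output is a valid tuple. Conversely, given a tuple $(h,\Theta,e_u,e_l)$, the orthonormal pair $(e_u,e_l)$ spans a rank-two subbundle of $T^{\ast}\Sigma$, and I would complete it to an oriented $h$-orthonormal coframe by setting $e_n \eqdef \ast_h(e_u\wedge e_l)$, where $\ast_h$ is the Hodge star of $(\Sigma,h)$ with respect to the fixed orientation. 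This $e_n$ is canonically determined: it is the unique unit one-form $h$-orthogonal to both $e_u$ and $e_l$ such that $(e_u,e_l,e_n)$ is positively oriented. The section $\mathfrak{e} \eqdef (e_u,e_l,e_n)$ of $\mathrm{F}(\Sigma)$ together with $\Theta$ is the associated pair.

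Next I would check these two assignments are mutually inverse. Starting from $(h,\Theta,e_u,e_l)$, passing to $(\mathfrak{e},\Theta)$ and back recovers $h$ because $e_u,e_l,e_n$ are $h$-orthonormal by construction, so $\sum_i e_i\otimes e_i = h$; the one-forms $e_u,e_l$ and the tensor $\Theta$ are literally preserved. In the other direction, starting from $(\mathfrak{e},\Theta)=((e_1,e_2,e_3),\Theta)$, the metric $h=\sum_i e_i\otimes e_i$ makes $(e_1,e_2,e_3)$ orthonormal and positively oriented, so the completion $e_n=\ast_h(e_1\wedge e_2)$ of $(e_1,e_2)$ agrees with $e_3$; hence $\mathfrak{e}$ is recovered exactly. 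The only point requiring a genuine argument is this uniqueness of the completing covector $e_n$, which is where orientability of $\Sigma$ is essential: without a global orientation there is a sign ambiguity in the Hodge dual, and the correspondence would fail to single out a canonical coframe.

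The main obstacle, such as it is, is purely bookkeeping: one must confirm that the Hodge-star construction of $e_n$ is smooth and globally well-defined, which follows because $e_u\wedge e_l$ is a nowhere-vanishing smooth two-form (the pair being pointwise orthonormal) and $\ast_h$ is a smooth bundle map once $h$ and the orientation are fixed. I would remark that the word \emph{canonical} in the statement is justified precisely because no auxiliary choices enter beyond the fixed orientation of $\Sigma$: the map in both directions is natural. I expect no analytic difficulty whatsoever; the content is entirely the observation that an oriented coframe is equivalent to a metric together with an oriented orthonormal pair-plus-completion, with the completion pinned down by $\ast_h$.
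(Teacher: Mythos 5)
Your proof is correct and takes essentially the same approach as the paper: complete $(e_u,e_l)$ to an oriented coframe via $e_n \eqdef \ast_h(e_u\wedge e_l)$ in one direction, and recover the metric as $h = e_u\otimes e_u + e_l\otimes e_l + e_n\otimes e_n$ in the other. The only difference is cosmetic: you verify both compositions are the identity, while the paper spells out just one and leaves the other implicit.
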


\begin{proof}
Given $(h,\Theta,e_u,e_l)$, set:
\begin{equation*}
\fre = (e_u,e_l,e_n \eqdef \ast_h (e_u\wedge e_l))\, ,
\end{equation*}

\noindent
which is clearly a section of $\mathrm{F}(\Sigma)$. Conversely, given pair $(\fre,\Theta)$, write:
\begin{equation*}
\fre = (e_u, e_l ,e_n)\, ,
\end{equation*}

\noindent
and map $(\fre,\Theta)$ to the tuple $(h,\Theta,e_u,e_l)$, where:
\begin{equation*}
h = e_u\otimes e_u + e_l\otimes e_l + e_n\otimes e_n\, .
\end{equation*}

\noindent
Such tuple is mapped back again to $(\mathfrak{e} = (e_u, e_l ,e_n),\Theta)$ by the previous correspondence and hence we obtain the desired one to one map.
\end{proof}

\noindent
Therefore, in the following we will consider coframes and shape operators on $\Sigma$ as variables of equations \eqref{eq:spinoronSigma}.  

\begin{proposition}
\label{prop:exder}
Equations \eqref{eq:spinoronSigma} are equivalent to the following system of first-order partial differential equations: 
\begin{equation}
\label{eq:exder}
\dd \fre=\Theta(\fre) \wedge e_u \, , \qquad  [\Theta(e_u)]=0\, .
\end{equation}

\noindent
for pairs $(\fre = (e_u,e_l,e_n),\Theta)$.
\end{proposition}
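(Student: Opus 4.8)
The plan is to use the fact that for an orthonormal coframe the Levi-Civita connection is \emph{uniquely} determined by the exterior derivatives of the coframe, so that translating between the covariant formulation \eqref{eq:spinoronSigma} and the exterior formulation \eqref{eq:exder} reduces to an alternation in one direction and to an appeal to uniqueness of the connection in the other. I work throughout in the orthonormal coframe $\fre=(e_u,e_l,e_n)$, and I record two elementary algebraic identities valid for any symmetric $\Theta$ in such a coframe: the expansion $\Theta=\Theta(e_u)\otimes e_u+\Theta(e_l)\otimes e_l+\Theta(e_n)\otimes e_n$, and, by alternating this identity and using that $\Theta$ is symmetric (so that $\mathrm{Alt}(\Theta)=0$), the cancellation relation $\Theta(e_u)\wedge e_u+\Theta(e_l)\wedge e_l+\Theta(e_n)\wedge e_n=0$. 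The cohomological condition $[\Theta(e_u)]=0\in H^1(\Sigma,\mathbb{R})$ is common to both systems and needs no argument.

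For the implication \eqref{eq:spinoronSigma}$\Rightarrow$\eqref{eq:exder} I would first note that, since $\nabla^h$ is torsion-free, $\dd\alpha=\mathrm{Alt}(\nabla^h\alpha)$ for every one-form $\alpha$, so that symmetric tensors alternate to zero and $\beta\otimes\gamma$ alternates to $\beta\wedge\gamma$. Alternating the first two equations of \eqref{eq:spinoronSigma} gives $\dd e_u=\Theta(e_u)\wedge e_u$ and $\dd e_l=\Theta(e_l)\wedge e_u$ at once. For the remaining component I would compute $\nabla^h e_n$ from the first two equations via metric compatibility: introducing the connection one-forms $\omega_{ab}$ of $\fre$ (antisymmetric, with $\nabla^h e_a=-\sum_b\omega_{ab}\otimes e_b$), the $e_u$-equation, rewritten via the expansion identity as $\nabla^h e_u=-\Theta(e_l)\otimes e_l-\Theta(e_n)\otimes e_n$, forces $\omega_{ul}=\Theta(e_l)$ and $\omega_{un}=\Theta(e_n)$, while the $e_l$-equation forces in addition $\omega_{ln}=0$; antisymmetry then yields $\nabla^h e_n=\Theta(e_n)\otimes e_u$, whose alternation is the third equation $\dd e_n=\Theta(e_n)\wedge e_u$.

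For the converse I would reverse this logic. Given a solution of \eqref{eq:exder}, I posit the candidate connection one-forms $\omega_{ul}=\Theta(e_l)$, $\omega_{un}=\Theta(e_n)$, $\omega_{ln}=0$, extended by antisymmetry, and check directly — using the cancellation identity above in the $e_u$ case — that they satisfy the torsion-free structure equations $\dd e_a=-\sum_b\omega_{ab}\wedge e_b$ precisely when \eqref{eq:exder} holds. By the uniqueness part of the fundamental theorem of pseudo-Riemannian geometry these are then the Levi-Civita connection forms of $h$, and reading off $\nabla^h e_u=-\Theta(e_l)\otimes e_l-\Theta(e_n)\otimes e_n$ and $\nabla^h e_l=\omega_{ul}\otimes e_u=\Theta(e_l)\otimes e_u$ and applying the expansion identity recovers the first two equations of \eqref{eq:spinoronSigma} verbatim.

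The only substantive point — and the step I would treat most carefully — is this converse direction. The system \eqref{eq:exder} retains only the antisymmetric part of $\nabla^h\fre$, so it looks a priori weaker than \eqref{eq:spinoronSigma}; what saves the equivalence is that the coframe is orthonormal, so the symmetric part of the covariant derivative is not free but is forced by metric compatibility and vanishing torsion. I would therefore lean on uniqueness of the connection rather than attempt to integrate the symmetric part directly, and I would verify with care the sign in the cancellation $\Theta(e_u)\wedge e_u+\Theta(e_l)\wedge e_l+\Theta(e_n)\wedge e_n=0$, on which the consistency of the $e_u$ structure equation depends.
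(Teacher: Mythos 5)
Your proof is correct and takes essentially the same route as the paper: skew-symmetrization of the covariant equations in one direction, and the first Cartan structure equations for the orthonormal coframe together with uniqueness of the Levi-Civita connection in the other (the paper states this converse as a one-line appeal to the structure equations, which you simply spell out). The only real difference is how the third equation is obtained in the forward direction: the paper derives $\nabla^{h}e_n=\Theta(e_n)\otimes e_u$ from naturality of the Hodge star applied to $e_n=\ast_{h}(e_u\wedge e_l)$, whereas you get it from antisymmetry of the connection one-forms --- two equivalent expressions of metric compatibility.
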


\begin{remark}
More explicitly, equation $\dd \fre=\Theta(\fre) \wedge e_u$ corresponds to the following conditions:
\begin{equation*}
\dd e_u = \Theta(e_u) \wedge e_u\, , \qquad \dd e_l = \Theta(e_l) \wedge e_u\, , \qquad \dd e_n=\Theta(e_n) \wedge e_u\, ,
\end{equation*}

\noindent
where $\fre = (e_u,e_l,e_n)$. It should be noted that generalized Killing spinors on Riemannian three-manifolds, which differ from their \emph{imaginary} version which is obtained in the Lorentzian framework considered in this article, can also be studied in terms of a global coframe satisfying a given exterior differential system, see \cite{MoroianuSemmelmann} for more details.
\end{remark}

\begin{proof}
Suppose that $(\fre , \Theta)$ is a solution of equations \eqref{eq:spinoronSigma} where $h = e_u\otimes e_u + e_l\otimes e_l + e_n\otimes e_n$ and:
\begin{equation*}
\fre = (e_u,e_l,e_n = \ast_h (e_u\wedge e_l))\, .
\end{equation*}

\noindent
A direct computation shows that:
\begin{equation*}
\nabla^h e_n = \nabla^h  \ast_h (e_u\wedge e_l) =    \ast_h (\nabla^h e_u\wedge e_l) + \ast_h ( e_u\wedge \nabla^h e_n) = \ast_h (\nabla^g e_u\wedge e_l) = \Theta(e_n) \otimes e_u \, .
\end{equation*}

\noindent
The skew-symmetrization of the previous equation together with the  skew-symmetrization of the first two equations in \eqref{eq:spinoronSigma} yields, together with the cohomological condition, equations \eqref{eq:exder}. The converse follows easily by interpreting the first equation in \eqref{eq:exder} as the first Cartan structure equations for the coframe $\fre$, considered as orthonormal with respect to the metric $h = e_u\otimes e_u + e_l\otimes e_l + e_n\otimes e_n$. 
\end{proof}

\begin{remark}
Leaving aside the cohomological condition, Equations \eqref{eq:spinoronSigma} form a set of (a priori) nine independent equations. This is exactly the same number of (a priori) independent equations occurring in \eqref{eq:exder}, reflecting thus the equivalence between Equations \eqref{eq:spinoronSigma} and \eqref{eq:exder}. 
\end{remark}

\noindent
We will refer to equations \eqref{eq:exder} as the \emph{parallel Cauchy differential system}, which yields the constraint equations of a parallel spinor field on a globally hyperbolic four-dimensional space-time and will be the main object of study in this article.  

\begin{definition}
A \emph{Cauchy pair} $(\fre,\Theta)$ consists of a coframe $\fre$ and a symmetric $(2,0)$ tensor $\Theta$. A \emph{parallel Cauchy coframe} with respect to $\Theta$ is a coframe $\fre$ on $\Sigma$ such that $(\fre,\Theta)$ satisifes the parallel Cauchy differential system \eqref{eq:exder}. A \emph{parallel Cauchy pair} $(\fre,\Theta)$ is a Cauchy pair satisfying the parallel Cauchy differential system \eqref{eq:exder}.
\end{definition}

\noindent
The Riemannian metric associated to a parallel Cauchy coframe $\fre = (e_u,e_l,e_n)$, with respect to which equations \eqref{eq:spinoronSigma} are satisfied is defined as follows:
\begin{equation*}
h_{\fre} \eqdef e_u\otimes e_u + e_l\otimes e_l + e_n\otimes e_n\, .
\end{equation*}

\begin{remark}
As explained above, a Cauchy pair $(\fre,\Theta)$ defines a Riemannian metric $h_{\fre}$. Therefore, it is natural to impose the constraint equations of the four-dimensional Ricci-flatness problem on a pair $(h_{\fre},\Theta)$ associated to a parallel Cauchy pair $(\fre,\Theta)$. Such data $(\fre,\Theta,h_{\fre})$ would satisfy both the constraint equations of the parallel spinor and Ricci-flat problems. However, and to the best knowledge of the authors, this is not enough to guarantee that $(\Sigma,h_{\fre})$ admits a Lorentzian development which at the same time is Ricci flat and admits a parallel spinor. The reason is that the evolution of the initial data prescribed by the parallel spinor and Ricci flat problems need not be isomorphic. 
\end{remark}

\noindent
We provide now an example of a parallel Cauchy pair on a non-flat Riemannian three-manifold.

\begin{example}
\label{ep:tau3mu}
Take $\Sigma = \tau_{3,\mu}$ to be the simply-connected non-unimodular Lie group $\tau_{3,\mu}$ where $-1 < \mu \leq 1$, $\mu \neq 0$, is a constant, see \cite[Chapter 7]{Gorbatsevich} for its precise definition. On $\tau_{3,\mu}$ there exists a left-invariant co-frame $(e^1,e^2,e^3)$ satisfying:
\begin{equation*}
\dd e^1 = 0\, , \qquad \dd e^2 = \mu \, e^2\wedge e^1\, , \qquad \dd e^3 = e^3 \wedge e^1\, .
\end{equation*}
	
\noindent
Set:
\begin{equation*}
\fre = (e_u, e_l, e_n) := (e^1,e^2,e^3) \, , \quad h_{\fre} = e_u\otimes e_u + e_l\otimes e_l + e_n\otimes e_n\, , \quad \Theta := h + (\mu - 1)\, e_l\otimes e_l\, .
\end{equation*}
	
\noindent
A direct computation shows that $(\fre,\Theta)$ defines a parallel Cauchy pair on $\tau_{3,\mu}$, that is, $(\fre,\Theta)$ is a solution of equations \eqref{eq:exder}, or, equivalently, equations \eqref{eq:spinoronSigma}. Note that since $\dd e_u = 0$ and $\tau_{3,\mu}$ is simply connected, the one-form $e_u = \Theta(e_u)$ is automatically exact. In particular, we have:
\begin{equation*}
\nabla^h e_u = -\mu\, e_l\otimes e_l - e_n\otimes e_n\, , \qquad \nabla^h e_l = \mu \, e_l\otimes e_u\, , \qquad \nabla^h e_n = e_n\otimes e_u\, ,
\end{equation*}
	
\noindent
conditions which are equivalent to equations \eqref{eq:spinoronSigma}. More explicitly, write $e_u = \dd \mathfrak{f}$ for a real function $\mathfrak{f}\in C^{\infty}(\Sigma)$. Then $(\hat{e}_l = e^{\mu\mathfrak{f}} e_l , \hat{e}_n = e^{\mathfrak{f}} e_n)$ defines a pair of closed nowhere vanishing one-forms. In particular, $\hat{\fre} = (e_u,\hat{e}_l , \hat{e}_n)$ is a closed global coframe on $\Sigma$. Set:
\begin{equation*}
h_{\hat{\fre}} \eqdef e_u\otimes e_u + \hat{e}_l\otimes \hat{e}_l + \hat{e}_n\otimes \hat{e}_n\, ,
\end{equation*}
	
\noindent
to be the Riemannian metric defined by $\hat{\fre} \eqdef (e_u, \hat{e}_l, \hat{e}_n)$. Since $\dd\hat{\fre} = 0$, the metric $h_{\hat{\fre}}$ is flat and therefore:
\begin{equation*}
h_{\fre} = e_u\otimes e_u + e^{-2\mu\mathfrak{f}} \hat{e}_l\otimes \hat{e}_l + e^{-2\mathfrak{f}} \hat{e}_n\otimes \hat{e}_n\, ,
\end{equation*}
	
\noindent
is a warped product of flat metrics. Even more, since $\hat{\fre} = (e_u,\hat{e}_l , \hat{e}_n)$ is a closed coframe there exist local coordinates $(z,x,y)$ (global, if $\hat{\fre}$ is complete) such that:
\begin{equation*}
e_u = \dd \mathfrak{f} = \dd z\, , \qquad \hat{e}_l = \dd x\, , \qquad \hat{e}_n = \dd y\, .
\end{equation*}
	
\noindent
Therefore, the metric can be written as follows:
\begin{equation*}
h_{\fre} = \dd z\otimes \dd z + e^{- 2 \mu z} \dd x\otimes \dd x + e^{- 2 z}\dd y\otimes \dd y\, .
\end{equation*}
	
\noindent
The scalar curvature of $h_{\fre}$ can be computed to be:
\begin{equation*}
\mathrm{R}_h = -2 (1 + \mu + \mu^2)\, .
\end{equation*}
	
\noindent
which, together with the fact that $\vert\Theta\vert^2_h = 2+\mu^2$ and $\mathrm{Tr}_h(\Theta)^2 = (2+\mu)^2$ shows that the Hamiltonian constraint is satisfied if and only if\footnote{Recall that $-1 < \mu \leq 1$ and $\mu\neq 0$.}:
\begin{equation*}
\mu  = 1  \, .
\end{equation*}
	
\noindent
Since the momentum constraint is clearly satisfied if and only if $\mu =1$, we conclude that if $\mu\neq 1$ we obtain a solution to the constraint equations \eqref{eq:globhyperspinorI} and \eqref{eq:globhyperspinorII} whose Lorentzian development yields a non Ricci-flat Lorentzian four manifold. On the other hand, if $\mu =1$, the Riemannian three-manifold $(\Sigma,h_{\fre})$ admits Lorentzian developments (not necessarily equal) which are either Ricci flat, admit a parallel spinor or both. In all these cases (with $\mu=1$), $(\Sigma,h_{\fre})\hookrightarrow (M,g)$ is a totally umbilical submanifold of $(M,g)$.  
\end{example}

\noindent
A Cauchy pair is said to be \emph{complete} if $(\Sigma,h_{\fre})$ is a complete Riemannian three-manifold. When necessary, the dual of a Cauchy coframe $\fre$ will be denoted by $\fre^{\sharp} = (e^{\sharp}_u,e^{\sharp}_l,e^{\sharp}_n)$. Denote by:
\begin{equation*}
\Conf(\Sigma) \eqdef \Gamma(\mathrm{S}^2 T^{\ast}\Sigma) \times \Gamma(\mathrm{F}(\Sigma))\, ,
\end{equation*}

\noindent
the configuration space of the Cauchy differential system that is, its space of variables. Likewise, denote by:
\begin{equation*}
\Sol(\Sigma)\subset \Conf(\Sigma)\, , 
\end{equation*}

\noindent
the subspace of solutions of the Cauchy differential system. We have a canonical map:
\begin{equation*}
\Sol(\Sigma) \to \Met_c(\Sigma)\, , \qquad (\fre,\Theta) \mapsto h_{\fre}\, ,
\end{equation*}

\noindent
where $\Met_c(\Sigma)$ denotes the space of complete Riemannian metrics on $\Sigma$. The image of the previous map, which we denote by $\Met^s_c(\Sigma)$, is by definition the space of complete Riemannian metrics on $\Sigma$ that admit a solution to the Cauchy differential system for a shape operator $\Theta \in  \Gamma(\mathrm{S}^2 T^{\ast} \Sigma)$. The group of orientation preserving diffeomorphisms $\Diff(\Sigma)$ has a natural left action on $\Conf(\Sigma)$ given by push-forward:
\begin{equation*}
\mathbb{A}\colon \Diff(\Sigma)\times \Conf(\Sigma) \to \Conf(\Sigma)\, , \quad (\fru,(\fre,\Theta))\mapsto (\fru_{\ast}\fre,\fru_{\ast}\Theta)\, .
\end{equation*}

\noindent
For every $\fru\in \Diff(\Sigma)$, define:
\begin{equation*}
\mathbb{A}_{\fru}\colon \Conf(\Sigma) \to \Conf(\Sigma)\, , \quad (\fre,\Theta)\mapsto (\fru_{\ast}\fre,\fru_{\ast}\Theta)\, .
\end{equation*}

\begin{lemma}
\label{lemma:equivariancia}
Let $\fru \in \Diff(\Sigma)$. Then, $(\fre ,\Theta) \in \Sol(\Sigma)$ if and only if $\mathbb{A}_{\fru}(\fre ,\Theta) \in \Sol(\Sigma)$.
\end{lemma}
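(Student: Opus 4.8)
The plan is to show that the parallel Cauchy differential system \eqref{eq:exder} is \emph{natural} under push-forward by orientation-preserving diffeomorphisms, so that $\mathbb{A}_{\fru}$ carries solutions to solutions; the biconditional then follows formally. Since $\mathbb{A}_{\fru}$ is invertible with inverse $\mathbb{A}_{\fru^{-1}}$ (because push-forward satisfies $(\fru_1)_{\ast}(\fru_2)_{\ast} = (\fru_1\circ \fru_2)_{\ast}$, whence $\mathbb{A}_{\fru^{-1}}\circ \mathbb{A}_{\fru} = \mathbb{A}_{\mathrm{id}} = \mathrm{id}$), and since $\fru^{-1}\in \Diff(\Sigma)$ is again orientation preserving, it suffices to prove the forward implication for every $\fru$: if $(\fre,\Theta)\in \Sol(\Sigma)$ then $\mathbb{A}_{\fru}(\fre,\Theta) = (\fru_{\ast}\fre,\fru_{\ast}\Theta)\in \Sol(\Sigma)$. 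Applying this to $\fru^{-1}$ and to the pair $\mathbb{A}_{\fru}(\fre,\Theta)$ then yields the converse.

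First I would record the naturality of the three structures entering \eqref{eq:exder}. As $\fru$ is a diffeomorphism, push-forward $\fru_{\ast} = (\fru^{-1})^{\ast}$ commutes with the exterior derivative, $\fru_{\ast}\dd = \dd\, \fru_{\ast}$, and is an algebra homomorphism for the wedge product, $\fru_{\ast}(\alpha\wedge\beta) = \fru_{\ast}\alpha \wedge \fru_{\ast}\beta$. Because the associated metric is built tensorially from the coframe, $h_{\fre} = e_u\otimes e_u + e_l\otimes e_l + e_n\otimes e_n$, push-forward commutes with this construction too, giving $h_{\fru_{\ast}\fre} = \fru_{\ast}h_{\fre}$; orientation preservation guarantees that $\fru_{\ast}\fre$ is again a section of $\mathrm{F}(\Sigma)$. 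The one point requiring care is the metric-dependent operation $\Theta(\alpha) = \Theta(\alpha^{\sharp_{h}},\cdot)$: I would check that raising an index is natural once the metric is transported, namely $(\fru_{\ast}\alpha)^{\sharp_{h_{\fru_{\ast}\fre}}} = \fru_{\ast}(\alpha^{\sharp_{h_{\fre}}})$, whence $(\fru_{\ast}\Theta)(\fru_{\ast}\alpha) = \fru_{\ast}(\Theta(\alpha))$ with the left-hand operation computed using $h_{\fru_{\ast}\fre}$. This identity is the crux of the argument and the step I expect to require the most care.

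With these naturality statements in hand, the first equation of \eqref{eq:exder} transforms transparently: applying $\fru_{\ast}$ to $\dd\fre = \Theta(\fre)\wedge e_u$ gives
\[
\dd(\fru_{\ast}\fre) = \fru_{\ast}(\dd\fre) = \fru_{\ast}\big(\Theta(\fre)\wedge e_u\big) = (\fru_{\ast}\Theta)(\fru_{\ast}\fre)\wedge \fru_{\ast}e_u,
\]
which is exactly the first equation of \eqref{eq:exder} for the pair $(\fru_{\ast}\fre,\fru_{\ast}\Theta)$, with $\fru_{\ast}e_u$ the first component of $\fru_{\ast}\fre$. For the cohomological condition I would invoke that a diffeomorphism induces an isomorphism on de Rham cohomology, so $[\Theta(e_u)] = 0\in H^1(\Sigma,\mathbb{R})$ forces $[(\fru_{\ast}\Theta)(\fru_{\ast}e_u)] = [\fru_{\ast}(\Theta(e_u))] = \fru_{\ast}[\Theta(e_u)] = 0$. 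Both equations of \eqref{eq:exder} thus hold for $\mathbb{A}_{\fru}(\fre,\Theta)$, establishing the forward implication and, by the reduction above, the lemma.
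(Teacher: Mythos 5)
Your proof is correct and takes essentially the same route as the paper's: both rest on the naturality of $\dd$, $\wedge$, metric dualization and de Rham cohomology under push-forward by $\fru$, the paper simply compressing your forward-implication-plus-inverse reduction into a single chain of equivalences. Your explicit verification of $(\fru_{\ast}\Theta)(\fru_{\ast}\alpha)=\fru_{\ast}(\Theta(\alpha))$ and of the cohomological condition merely fills in details the paper dispatches with \emph{``and similarly for the remaining equations.''}
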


\begin{proof}
We compute:
\begin{equation*}
\dd e_u=\Theta(e_u) \wedge e_u  \Leftrightarrow \fru_{\ast}\dd e_u= \fru_{\ast}(\Theta(e_u) \wedge e_u) \Leftrightarrow \dd \fru_{\ast}e_u= (\fru_{\ast}\Theta)(\fru_{\ast}e_u) \wedge \fru_{\ast}e_u\, ,
\end{equation*}
	
\noindent
and similarly for the remaining equations of the Cauchy differential system \eqref{eq:exder}.
\end{proof}

\noindent
Therefore, the orientation-preserving diffeomorphism group of $\Sigma$ has a well-defined action on the space of parallel Cauchy pairs and we can consider the quotient:
\begin{equation*}
\mathfrak{M}(\Sigma) \eqdef \Sol(\Sigma)/\Diff(\Sigma)\, ,
\end{equation*}

\noindent
defined by the action $\mathbb{A}$. We call $\mathfrak{M}(\Sigma)$ the \emph{moduli space} of parallel Cauchy pairs on $\Sigma$, which we plan to investigate in a separate publication. In the following we will consider two parallel Cauchy pairs to be isomorphic if they are related by an orientation preserving diffeomorphism of $\Sigma$ as prescribed by the action $\mathbb{A}$. 


\subsection{The momentum and Hamiltonian constraints}
\label{subsec:ConstraintRicciflat}

 
In this section we consider the interplay between the Cauchy differential system and the constraint equations on $\Sigma$ induced from imposing Ricci-flatness in four dimensions. Recall that, in contrast with the situation occurring in Riemannian geometry, not every Lorentzian space-time admitting a parallel spinor is necessarily Ricci-flat, see for instance \cite{Bryant} for more details and explicit examples.

\begin{definition}
A parallel Cauchy pair $(\fre,\Theta)$ is \emph{constrained Ricci-flat} if it satisfies Equations \eqref{eq:RicciflatonSigma}, that is, if $h_{\fre}$ satisfies the constraint equations corresponding to the initial value problem posed by the Ricci-flatness condition in four dimensions.
\end{definition}

\begin{lemma}
\label{lemma:eumc}
Let $(\fre , \Theta)$ be a parallel Cauchy pair and write $\fre = (e_u,e_l,e_n)$. Then: 
\begin{equation*}
\mathrm{div}_h (\Theta)\wedge e_u = \dd \mathrm{Tr}(\Theta) \wedge e_u\, .
\end{equation*}
\end{lemma}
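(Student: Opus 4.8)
The plan is to show that the one-form $M \eqdef \mathrm{div}_h(\Theta) - \dd\,\mathrm{Tr}(\Theta)$ is everywhere proportional to $e_u$, which is equivalent to the claimed identity $\mathrm{div}_h(\Theta)\wedge e_u = \dd\,\mathrm{Tr}(\Theta)\wedge e_u$. Let $(E_u,E_l,E_n)$ be the frame dual to $\fre = (e_u,e_l,e_n)$, write $\Theta_{ab} = \Theta(E_a,E_b)$, and set $\theta_u \eqdef \Theta(e_u) = \sum_c \Theta_{uc}\,e_c$. Since $M\wedge e_u = M(E_l)\,e_l\wedge e_u + M(E_n)\,e_n\wedge e_u$, it suffices to prove $M(E_l)=M(E_n)=0$; I will treat $M(E_l)$, the case of $M(E_n)$ being identical after exchanging the roles of $l$ and $n$.

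First I would record the elementary identity, valid for any symmetric $\Theta$ and any $X\in\mathfrak{X}(\Sigma)$, $M(X) = \sum_a \big[(\nabla^h_{E_a}\Theta)(E_a,X) - (\nabla^h_X\Theta)(E_a,E_a)\big]$, which follows from $\mathrm{Tr}(\nabla^h_X\Theta) = X(\mathrm{Tr}(\Theta))$ and the definition of the divergence. To evaluate the right-hand side I would feed in the covariant derivatives of the coframe, which are already available: equations \eqref{eq:spinoronSigma} give $\nabla^h e_u = \Theta(e_u)\otimes e_u - \Theta$ and $\nabla^h e_l = \Theta(e_l)\otimes e_u$, while the computation in the proof of Proposition \ref{prop:exder} gives $\nabla^h e_n = \Theta(e_n)\otimes e_u$. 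Equivalently, and more conveniently for the next step, the equation $\dd e_c = \Theta(e_c)\wedge e_u$ determines the Lie brackets of the dual frame, namely $[E_l,E_n]=0$, $[E_u,E_l] = \sum_c \Theta_{cl}\,E_c$ and $[E_u,E_n] = \sum_c \Theta_{cn}\,E_c$.

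Next I would substitute these data into $M(E_l)$ and simplify. Using the symmetry of $\Theta$, the undifferentiated terms organize into $-\Theta_{ul}\,\mathrm{Tr}(\Theta)$; then, invoking the first-order integrability condition obtained from $\dd(\dd e_n)=0$ (which reads $E_l\Theta_{nn}-E_n\Theta_{nl} = \Theta_{nl}\Theta_{un}-\Theta_{nn}\Theta_{ul}$) to eliminate the transverse derivative $E_n\Theta_{nl}$, the whole expression collapses to $M(E_l) = E_u(\Theta_{ul}) - E_l(\Theta_{uu}) - \theta_u([E_u,E_l])$, which is precisely $\dd\theta_u(E_u,E_l)$. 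Finally, the cohomological condition $[\Theta(e_u)]=0\in H^1(\Sigma,\RR)$ forces $\theta_u=\Theta(e_u)$ to be exact, hence closed, so $\dd\theta_u=0$ and therefore $M(E_l)=0$; the same computation with $l$ and $n$ interchanged (now using $\dd(\dd e_l)=0$) yields $M(E_n)=0$, and we conclude.

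The step I expect to be the main obstacle, and the conceptual heart of the statement, is the last one. After using only $\dd\fre = \Theta(\fre)\wedge e_u$ and its integrability, the surviving part of $M(E_l)$ retains the transverse derivative $E_u(\Theta_{ul})$ together with the component $\Theta_{uu}=\Theta(e_u,e_u)$ and its derivative $E_l(\Theta_{uu})$, none of which is controlled by that portion of the Cauchy system (indeed $\Theta_{uu}$ does not even enter $\dd\fre$). One must recognize that this leftover combination is exactly $\dd\theta_u(E_u,E_l)$ and that it is annihilated by the cohomological condition, which is the precise ingredient tying the $e_l$- and $e_n$-components of the momentum constraint to the exactness of $\Theta(e_u)$. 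By contrast the $e_u$-component of $M$ is left entirely unconstrained, reflecting that the full momentum constraint in \eqref{eq:RicciflatonSigma} is a genuine additional Ricci-flatness requirement rather than an automatic consequence of being a parallel Cauchy pair.
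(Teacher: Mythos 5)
Your proof is correct, and its computational core coincides with the paper's own argument: both expand $\mathrm{div}_h(\Theta)$ and $\dd\,\mathrm{Tr}(\Theta)$ in the parallel coframe using $\nabla^{h_{\fre}}_{e_b}e_a=-\delta_{au}\Theta(e_b)+\Theta(e_a,e_b)e_u$ (equivalently, your bracket relations $[e^{\sharp}_u,e^{\sharp}_l]=\Theta(e_l)^{\sharp}$, $[e^{\sharp}_u,e^{\sharp}_n]=\Theta(e_n)^{\sharp}$, $[e^{\sharp}_l,e^{\sharp}_n]=0$), and both then use the integrability identity $e_l(\tnn)-e_n(\tln)=\tln\tun-\tnn\tul$ coming from $\dd^2 e_n=0$. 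Where you genuinely go beyond the paper's write-up is in the final step. The paper asserts that this identity alone already yields $\dd\,\mathrm{Tr}(\Theta)(e_l)=\mathrm{div}_h(\Theta)(e_l)$, but after the substitution one is left with
\begin{equation*}
\dd\,\mathrm{Tr}(\Theta)(e_l)-\mathrm{div}_h(\Theta)(e_l)
= e_l(\tuu)-e_u(\tul)+\tln\tun+\tuu\tul+\tll\tul
= -\,\dd\big(\Theta(e_u)\big)(e^{\sharp}_u,e^{\sharp}_l)\,,
\end{equation*}
which is exactly the residual you isolate. The exterior system $\dd\fre=\Theta(\fre)\wedge e_u$ by itself only controls the transverse component $\dd\big(\Theta(e_u)\big)(e^{\sharp}_l,e^{\sharp}_n)$ (via $\dd^2 e_u=0$); the components $\dd\big(\Theta(e_u)\big)(e^{\sharp}_u,e^{\sharp}_l)$ and $\dd\big(\Theta(e_u)\big)(e^{\sharp}_u,e^{\sharp}_n)$ are not constrained by it, since they involve $\tuu$ and $e_u$-derivatives that never enter $\dd\fre$. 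So this term does not vanish identically: one must invoke the cohomological condition $[\Theta(e_u)]=0$, which forces $\Theta(e_u)$ to be exact and hence closed. Your proof does this explicitly, whereas the paper's proof never appeals to the cohomological condition even though it is needed at precisely this point; your write-up is in effect the completed version of the paper's argument, and your closing observation --- that the $e_u$-component of $\mathrm{div}_h(\Theta)-\dd\,\mathrm{Tr}(\Theta)$ remains unconstrained, so the full momentum constraint is genuinely extra data --- is consistent with Proposition \ref{prop:hamiltonianmomentum} and the discussion in Section \ref{subsec:ConstraintRicciflat}.
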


\begin{proof}
The statement is equivalent to:
\begin{equation*}
\mathrm{div}_{h_{\fre}} (\Theta)(e_l) = \dd \mathrm{Tr}(\Theta)(e_l)\, , \qquad  \mathrm{div}_{h_{\fre}} (\Theta)(e_n) = \dd \mathrm{Tr}_{h_{\fre}}(\Theta)(e_n)\, .
\end{equation*}

\noindent
Note that we indistinctly denote with the same symbol one-forms and their duals by the metric wherever no possible confusion may arise. Now we write:
\begin{equation*}
\Theta = \Theta_{ab}\, e_a\otimes e_b\, , \qquad \Theta_{ab}\in \mathrm{C}^\infty(\Sigma)\, , \qquad a, b = u,l,n\, ,
\end{equation*}

\noindent
where $\Theta_{ab}\in C^{\infty}(\Sigma)$ are smooth functions. Also, recall that by the definition of parallel Cauchy coframe, $\fre = (e_u,e_l,e_n)$ satisfies:
\begin{equation*}
\nabla^{h_{\fre}}_{e_b}e_a = - \delta_{au} \Theta(e_b) + \Theta(e_a,e_b) e_u\, .
\end{equation*}

\noindent
 Using the previous equation, we compute:
\begin{equation*}
\mathrm{div}_h (\Theta)(e_l) = \sum_a (\nabla^{h_{\fre}}_{e_a}\Theta)(e_a,e_l)  = \sum_a e_a (\Theta_{al}) - \sum_a \Theta_{ul} \Theta_{aa}\, ,
\end{equation*}
 
\noindent
as well as:
\begin{equation*}
\dd\text{Tr}(\Theta)(e_l) = \sum_a e_l(\Theta_{aa}) \, .
\end{equation*}

\noindent
Hence:
\begin{equation*}
\dd\text{Tr}_{h_{\fre}}(\Theta)(e_l) -\mathrm{div}_h (\Theta)(e_l) =-e_u(\Theta_{ul}) - e_n(\Theta_{ln}) + e_l(\Theta_{uu}) + e_l(\Theta_{nn}) +  \tul \mathrm{Tr}_{h_{\fre}}(\Theta)\, .
\end{equation*}

\noindent
Using now that $\dd^2 e_n=0$ we obtain $e_l(\tnn)-e_n(\tln)=\tln\tun-\tnn\tul$, which in turn implies $\dd\mathrm{Tr}(\Theta)(e_l) = \mathrm{div}_h (\Theta)(e_l)$. Similarly $\mathrm{div}_h (\Theta)(e_n) = \dd \mathrm{Tr}(\Theta)(e_n)$ and hence we conclude.
\end{proof}

\noindent
For further reference, we obtain the Ricci tensor and scalar curvature of the Riemannian metric $h_{\fre}$ associated to a parallel Cauchy pair $(\fre,\Theta)$.  

\begin{proposition}
Let  $(\fre,\Theta)$ be a parallel Cauchy pair. The Ricci curvature of $h_{\fre}$ is given by: 
\begin{equation}
\label{eq:ricci}
\mathrm{Ric}^{\fre}=\Theta \circ \Theta -\mathrm{Tr}_{\fre}(\Theta)\Theta + (\dd \mathrm{Tr}_{\fre}(\Theta) - \mathrm{div}_{\fre} (\Theta) ) \otimes e_u + \nabla^{\fre}_{e_u}\Theta - (\nabla^{\fre} \Theta)(e_u)\, ,
\end{equation}

\noindent
whereas the scalar curvature of $h_{\fre}$ reads:
\begin{equation}
\label{eq:scalarcurvature}
\mathrm{R}^{\fre}=\vert \Theta \vert^2_{\fre}-\mathrm{Tr}_{\fre}(\Theta)^2-2(\mathrm{div}_{\fre} (\Theta)(e^{\sharp}_u) -  \dd \mathrm{Tr}_{\fre}(\Theta)(e^{\sharp}_u) )\, ,
\end{equation}
where $\dd_{\nabla^h}$ denotes the exterior covariant derivative associated to $\nabla^h$.
\end{proposition}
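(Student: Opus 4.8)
The plan is to compute the curvature of $h_{\fre}$ directly in the global $h_{\fre}$-orthonormal coframe $\fre=(e_u,e_l,e_n)$, using the explicit Levi-Civita connection that a parallel Cauchy coframe carries, and then to obtain \eqref{eq:scalarcurvature} simply by tracing \eqref{eq:ricci}.

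First I would record the connection already isolated in the proof of Lemma~\ref{lemma:eumc}, namely $\nabla^{\fre}_{e_b}e_a=-\delta_{au}\,\Theta(e_b)+\Theta(e_a,e_b)\,e_u$; dually this reads $\nabla^{\fre}_{e^{\sharp}_b}e^{\sharp}_a=\Theta(e_a,e_b)\,e^{\sharp}_u$ for $a\in\{l,n\}$ and $\nabla^{\fre}_{e^{\sharp}_b}e^{\sharp}_u=\Theta(e_u,e_b)\,e^{\sharp}_u-\Theta(e_b)^{\sharp}$, so that the only nonzero connection one-forms are $\omega^a_u=-\Theta(e_a)$ and $\omega^u_a=\Theta(e_a)$ for $a\in\{l,n\}$. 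I would also read off the structure functions of the frame directly from the Cauchy differential system $\dd e_a=\Theta(e_a)\wedge e_u$: since $\dd e_a(e^{\sharp}_b,e^{\sharp}_c)=-e_a([e^{\sharp}_b,e^{\sharp}_c])$, one gets $[e^{\sharp}_b,e^{\sharp}_c]=\delta_{bu}\,\Theta(e_c)^{\sharp}-\delta_{cu}\,\Theta(e_b)^{\sharp}$, which exhibits the asymmetry between the $e^{\sharp}_u$-direction and the $\{e^{\sharp}_l,e^{\sharp}_n\}$-directions.

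Next I would substitute these data into $R(X,Y)Z=\nabla_X\nabla_YZ-\nabla_Y\nabla_XZ-\nabla_{[X,Y]}Z$ (equivalently, into the second Cartan equation $\Omega^a_b=\dd\omega^a_b+\omega^a_c\wedge\omega^c_b$) and contract to produce $\mathrm{Ric}^{\fre}$. The resulting raw expression involves the frame derivatives of the components $\Theta(e_a,e_b)$ together with terms quadratic in $\Theta$, and the task is to repackage it into coordinate-free form: the purely algebraic part of the answer is $\Theta\circ\Theta-\mathrm{Tr}_{\fre}(\Theta)\Theta$, where $\Theta\circ\Theta$ is the square of $\Theta$ viewed as an $h_{\fre}$-symmetric endomorphism, while the derivative terms, after the connection coefficients are restored, combine into $(\dd\mathrm{Tr}_{\fre}(\Theta)-\mathrm{div}_{\fre}(\Theta))\otimes e_u+\nabla^{\fre}_{e_u}\Theta-(\nabla^{\fre}\Theta)(e_u)$, with $(\nabla^{\fre}\Theta)(e_u)(X,Y)=(\nabla^{\fre}_X\Theta)(Y,e^{\sharp}_u)$; this is \eqref{eq:ricci}. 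Formula \eqref{eq:scalarcurvature} then follows at once by tracing \eqref{eq:ricci} with $h_{\fre}$: the algebraic part contributes $\vert\Theta\vert^2_{\fre}-\mathrm{Tr}_{\fre}(\Theta)^2$, one has $\mathrm{Tr}_{\fre}(\nabla^{\fre}_{e_u}\Theta)=\dd\mathrm{Tr}_{\fre}(\Theta)(e^{\sharp}_u)$ and $\mathrm{Tr}_{\fre}((\nabla^{\fre}\Theta)(e_u))=\mathrm{div}_{\fre}(\Theta)(e^{\sharp}_u)$, and together with the trace $(\dd\mathrm{Tr}_{\fre}(\Theta)-\mathrm{div}_{\fre}(\Theta))(e^{\sharp}_u)$ of the remaining term these produce the overall factor $-2$.

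The main difficulty is not conceptual but organisational: one must recognise the several differentiated components of $\Theta$ as the intrinsic objects $\mathrm{div}_{\fre}(\Theta)$, $\dd\mathrm{Tr}_{\fre}(\Theta)$ and $(\nabla^{\fre}\Theta)(e_u)$, and keep the $e^{\sharp}_u$-direction carefully separated from $\{e^{\sharp}_l,e^{\sharp}_n\}$ throughout, since the connection and the brackets treat them asymmetrically. It is worth emphasising that the right-hand side of \eqref{eq:ricci} is the raw output of this computation and is \emph{not} manifestly symmetric term by term; its symmetry, which must hold because it computes $\mathrm{Ric}^{\fre}$, is a genuine consequence of the integrability relations obtained by imposing $\dd^2 e_a=0$ on $\dd e_a=\Theta(e_a)\wedge e_u$ — the very relations that, in Lemma~\ref{lemma:eumc}, force $\dd\mathrm{Tr}_{\fre}(\Theta)-\mathrm{div}_{\fre}(\Theta)$ to be a multiple of $e_u$. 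Checking the equality of the off-diagonal components computed in the two orders $(e^{\sharp}_l,e^{\sharp}_u)$ and $(e^{\sharp}_u,e^{\sharp}_l)$ therefore provides a robust consistency test on the whole calculation.
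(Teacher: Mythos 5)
Your proposal is correct and follows essentially the same route as the paper: both proofs compute the curvature of $h_{\fre}$ directly in the frame $\fre$ from the connection $\nabla^{\fre}_{e_b}e_a=-\delta_{au}\,\Theta(e_b)+\Theta(e_a,e_b)\,e_u$ forced by the parallel Cauchy system (the paper packages this as contracted exterior covariant derivatives of the connection data, which is just your Cartan-structure-equation formulation), and then derive \eqref{eq:scalarcurvature} by tracing \eqref{eq:ricci}. The trace identities you invoke, $\mathrm{Tr}_{\fre}(\nabla^{\fre}_{e_u}\Theta)=\dd\mathrm{Tr}_{\fre}(\Theta)(e^{\sharp}_u)$ and $\mathrm{Tr}_{\fre}((\nabla^{\fre}\Theta)(e_u))=\mathrm{div}_{\fre}(\Theta)(e^{\sharp}_u)$, are exactly those used in the paper's own proof.
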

 
\begin{proof}
The result is proven through a direct computation using the fact that for a parallel Cauchy pair $(\fre,\Theta)$ we have:
\begin{equation*}
\nabla^{h_{\fre}} e^{\sharp}_a = \Theta(e_a^{\sharp})\otimes e^{\sharp}_u - \delta_{ua} \Theta^{\sharp} \, , \qquad a = u,l,n\, 
\end{equation*}

\noindent
as well as:
\begin{eqnarray*}
& \nabla^{h_{\fre}}(\Theta(e_a^{\sharp})) = \sum_b(\dd \Theta_{ab}\otimes e_b  + \Theta_{ab} \Theta_b\otimes e_u) - \Theta_{u a } \Theta\, ,
\end{eqnarray*}

\noindent
where we have written $\Theta = \Theta_{ab} e_a\otimes e_b$, $a, b = u , l , n$. In our conventions the Ricci curvature reads:
\begin{equation*}
\mathrm{Ric}^{\fre} = \sum_c e^{\sharp}_u \lrcorner \dd_{\nabla^{\fre}}(e_c\otimes \Theta_c)- \sum_a e^{\sharp}_a \lrcorner \dd_{\nabla^{\fre}}(\Theta_a\otimes e_u)\, ,
\end{equation*}

\noindent
where $\dd_{\nabla^{\fre}}$ denotes the exterior covariant derivative for one-forms taking values on one forms. Expanding the desired result for the Ricci tensor follows. Taking the trace of Equation \eqref{eq:ricci}, we obtain:
\begin{equation*}
\mathrm{R}^h=\vert \Theta \vert^2-\mathrm{Tr}(\Theta)^2 - \mathrm{div}_h (\Theta)(e^{\sharp}_u) + \dd \mathrm{Tr}(\Theta)(e^{\sharp}_u)+\mathrm{Tr}_{\fre}(\nabla_{e_u} \Theta -(\nabla \Theta) (e_u))\, .
\end{equation*}

\noindent
The last term can be written as follows:
\begin{eqnarray*}
\sum_a ((\nabla_{e^{\sharp}_u} \Theta) (e^{\sharp}_a,e^{\sharp}_a) -(\nabla_{e^{\sharp}_a} \Theta) (e^{\sharp}_u,e^{\sharp}_a))  = \dd\mathrm{Tr}_{\fre}(\Theta)(e^{\sharp}_u) -   \mathrm{div}_{\fre} (\Theta)(e^{\sharp}_u)\, ,
\end{eqnarray*}

\noindent
whence:
\begin{equation*}
\mathrm{R}^{\fre}=\vert \Theta \vert^2-\text{Tr}(\Theta)^2-2(\mathrm{div}_h (\Theta)(e^{\sharp}_u) - \dd\mathrm{Tr}_{\fre}(\Theta)(e^{\sharp}_u) )\, ,
\end{equation*}

\noindent
and we conclude.
\end{proof}

\begin{remark}
If $\Theta$ is Codazzi then Equation \eqref{eq:ricci} simplifies to:
\begin{equation}
\label{eq:ricciCodazzi}
\mathrm{Ric}^{\fre}=\Theta \circ \Theta -\mathrm{Tr}_{\fre}(\Theta)\Theta\, ,
\end{equation}
 
\noindent 
which matches \cite[Proposition 5]{BaumMuller} modulo an unimportant constant factor.
\end{remark}

\begin{proposition}
\label{prop:hamiltonianmomentum}
A Cauchy pair $(\fre,\Theta)$ satisfies the Hamiltonian constraint, that is, the first equation in \eqref{eq:RicciflatonSigma}, if and only if $(\fre,\Theta)$ satisfies the momentum constraint, that is, the second equation in \eqref{eq:RicciflatonSigma}.  
\end{proposition}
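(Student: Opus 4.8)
The plan is to reduce both constraints to a single scalar identity along the distinguished direction $e_u$, and then to use the two results already established in this subsection---the scalar curvature formula \eqref{eq:scalarcurvature} and Lemma \ref{lemma:eumc}---to show that each constraint is equivalent to that identity. Throughout I read $(\fre,\Theta)$ as a \emph{parallel} Cauchy pair, since this is precisely the hypothesis under which both \eqref{eq:scalarcurvature} and Lemma \ref{lemma:eumc} were derived.

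First I would rewrite the Hamiltonian constraint using \eqref{eq:scalarcurvature}. Since that formula reads
\begin{equation*}
\mathrm{R}^{\fre}=\vert \Theta \vert^2_{\fre}-\mathrm{Tr}_{\fre}(\Theta)^2-2\bigl(\mathrm{div}_{\fre} (\Theta)(e^{\sharp}_u) -  \dd \mathrm{Tr}_{\fre}(\Theta)(e^{\sharp}_u) \bigr)\, ,
\end{equation*}
the equation $\mathrm{R}^{\fre}=\vert\Theta\vert^2_{\fre}-\mathrm{Tr}_{\fre}(\Theta)^2$ holds if and only if the bracketed term vanishes, i.e. if and only if
\begin{equation*}
\mathrm{div}_{\fre}(\Theta)(e^{\sharp}_u)=\dd\mathrm{Tr}_{\fre}(\Theta)(e^{\sharp}_u)\, .
\end{equation*}
Thus the Hamiltonian constraint is \emph{exactly} the $e_u$-component of the momentum constraint $\dd\mathrm{Tr}_{\fre}(\Theta)=\mathrm{div}_{\fre}(\Theta)$, with no leftover curvature terms.

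Next I would show that the remaining two components of the momentum constraint are automatic. Set $\omega \eqdef \mathrm{div}_{\fre}(\Theta)-\dd\mathrm{Tr}_{\fre}(\Theta)\in\Omega^1(\Sigma)$. Lemma \ref{lemma:eumc} states precisely that $\omega\wedge e_u=0$, and since $e_u$ is a nowhere-vanishing one-form this forces $\omega$ to be pointwise proportional to $e_u$; equivalently $\omega(e^{\sharp}_l)=\omega(e^{\sharp}_n)=0$. Hence the $e_l$- and $e_n$-components of the momentum constraint hold for \emph{every} parallel Cauchy pair, so the full momentum constraint $\omega=0$ is equivalent to its single surviving component $\omega(e^{\sharp}_u)=0$.

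Combining the two steps closes the argument: the Hamiltonian constraint is equivalent to $\omega(e^{\sharp}_u)=0$ by the first step, and the momentum constraint is equivalent to $\omega(e^{\sharp}_u)=0$ by the second, whence the two are equivalent. I do not anticipate a genuine obstacle here, as the hard analytic content sits in the already-proven Lemma \ref{lemma:eumc} and in the Ricci/scalar-curvature computation feeding \eqref{eq:scalarcurvature}; the only subtlety worth stating carefully is that the momentum constraint is a one-form identity whose two transverse components are consumed by Lemma \ref{lemma:eumc}, leaving exactly the scalar equation that \eqref{eq:scalarcurvature} pairs with the Hamiltonian constraint.
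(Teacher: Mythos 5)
Your proposal is correct and follows essentially the same route as the paper: the paper's (one-line) proof also combines the scalar curvature formula \eqref{eq:scalarcurvature} with Lemma \ref{lemma:eumc}, exactly as you do. Your write-up simply makes explicit the two steps the paper leaves implicit, namely that \eqref{eq:scalarcurvature} identifies the Hamiltonian constraint with the $e_u$-component of the momentum constraint, while Lemma \ref{lemma:eumc} kills the $e_l$- and $e_n$-components automatically.
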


\begin{proof}
Follows from the explicit expression \eqref{eq:scalarcurvature} for the scalar curvature of $h_{\fre}$ upon use of Lemma \ref{lemma:eumc}.
\end{proof}

\begin{proposition}
\label{prop:alleqs}
A pair $(\fre,\Theta)\in \Conf(\Sigma)$ is a constrained Ricci-flat parallel Cauchy pair if and only if:
\begin{eqnarray}
\label{eq:sys1}
&\dd e_u=\Theta(e_u) \wedge e_u \, , \quad \dd e_l=\Theta(e_l) \wedge e_u  \, , \quad \dd e_n=\Theta(e_n) \wedge e_u \, ,\\
& [\Theta(e_u)]=0\in H^1(\Sigma,\mathbb{R})\, , \quad  \mathrm{R}^{h_{\fre}}=\vert \Theta \vert^2-\mathrm{Tr}(\Theta)^2\,.
\label{eq:sys2}
\end{eqnarray}

\noindent
where $h_{\fre}$ is the Riemannian metric associated to $(\fre,\Theta)$. In particular, every Cauchy pair $(\fre,\Theta)$ whose shape operator $\Theta$ is Codazzi is constrained Ricci-flat.
\end{proposition}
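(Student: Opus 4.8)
The plan is to treat this as a direct consequence of Proposition \ref{prop:hamiltonianmomentum} together with the preceding curvature computations, so that the only genuinely new content is the concluding Codazzi assertion; I expect no real obstacle, since the analytic work has already been done.

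First I would dispatch the equivalence by unwinding definitions. A \emph{constrained Ricci-flat parallel Cauchy pair} is, by definition, a parallel Cauchy pair satisfying \eqref{eq:RicciflatonSigma}. Being a parallel Cauchy pair means it solves the parallel Cauchy differential system \eqref{eq:exder}, which is exactly the three equations of \eqref{eq:sys1} together with the cohomological condition $[\Theta(e_u)]=0$; and the first (Hamiltonian) equation of \eqref{eq:RicciflatonSigma} is the remaining equation of \eqref{eq:sys2}. This gives the forward implication at once. For the converse, assume \eqref{eq:sys1} and \eqref{eq:sys2}. The three equations of \eqref{eq:sys1} together with $[\Theta(e_u)]=0$ say precisely that $(\fre,\Theta)$ is a parallel Cauchy pair, and the last equation of \eqref{eq:sys2} is the Hamiltonian constraint. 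Applying Proposition \ref{prop:hamiltonianmomentum} to this parallel Cauchy pair, the momentum constraint follows automatically, so both equations of \eqref{eq:RicciflatonSigma} hold and the pair is constrained Ricci-flat.

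It is worth recalling why Proposition \ref{prop:hamiltonianmomentum} removes the momentum constraint, as this is where the content resides. The scalar-curvature identity \eqref{eq:scalarcurvature} shows that the Hamiltonian constraint is equivalent to the vanishing of $\mathrm{div}_{\fre}(\Theta)(e^{\sharp}_u)-\dd\mathrm{Tr}(\Theta)(e^{\sharp}_u)$, that is, of the $e_u$-component of the one-form $\mathrm{div}_{\fre}(\Theta)-\dd\mathrm{Tr}(\Theta)$, while Lemma \ref{lemma:eumc} forces its $e_l$- and $e_n$-components to vanish for any parallel Cauchy pair; since $(e_u,e_l,e_n)$ is a coframe, the full momentum constraint $\mathrm{div}_{\fre}(\Theta)=\dd\mathrm{Tr}(\Theta)$ follows. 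The only point requiring the parallel Cauchy structure is this decomposition along the coframe, and it has already been carried out.

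Finally, for the Codazzi statement I would verify that a Codazzi parallel Cauchy pair satisfies \eqref{eq:sys1} and \eqref{eq:sys2}. The parallel Cauchy differential system is assumed, so it remains to produce the Hamiltonian constraint. When $\Theta$ is Codazzi the Ricci tensor collapses to \eqref{eq:ricciCodazzi}, $\mathrm{Ric}^{\fre}=\Theta \circ \Theta -\mathrm{Tr}_{\fre}(\Theta)\Theta$; tracing this and using $\mathrm{Tr}_{\fre}(\Theta\circ\Theta)=\vert\Theta\vert^2_{\fre}$ gives $\mathrm{R}^{\fre}=\vert\Theta\vert^2_{\fre}-\mathrm{Tr}_{\fre}(\Theta)^2$, which is exactly the Hamiltonian constraint. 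Hence \eqref{eq:sys1} and \eqref{eq:sys2} hold and the equivalence yields the claim. Alternatively one sees the momentum constraint directly, since tracing the Codazzi identity $(\nabla^h_{v_1}\Theta)(v_2,v_3)=(\nabla^h_{v_2}\Theta)(v_1,v_3)$ gives $\mathrm{div}_{\fre}(\Theta)=\dd\mathrm{Tr}(\Theta)$. The only care needed is to confirm that the pair is a parallel Cauchy pair, so that the Ricci formula \eqref{eq:ricci} and its Codazzi simplification \eqref{eq:ricciCodazzi} apply.
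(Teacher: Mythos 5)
Your proposal is correct and matches the paper's proof in essentials: the equivalence is definitional unwinding plus Proposition \ref{prop:hamiltonianmomentum}, and your ``alternative'' argument for the Codazzi claim --- tracing the Codazzi identity to obtain $\mathrm{div}_{h_{\fre}}(\Theta)=\dd\,\mathrm{Tr}(\Theta)$ --- is precisely what the paper does, computing at a point in an orthonormal frame with $\nabla^{h_{\fre}}e_a\vert_p=0$. Note only that your primary route through \eqref{eq:ricciCodazzi} is slightly circular in spirit: the simplification of \eqref{eq:ricci} to \eqref{eq:ricciCodazzi} is itself justified by the traced Codazzi identity, i.e.\ by the momentum constraint you are ultimately after, so the direct trace argument is the cleaner, self-contained version.
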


\begin{proof}
By Proposition \ref{prop:hamiltonianmomentum} we only need to prove that if $(\fre,\Theta)$ is a parallel Cauchy pair and $\Theta$ is a Codazzi shape operator then the momentum constraint is automatically satisfied. Fix a point $p\in \Sigma$ and an orthonormal (with respect to $h_{\fre}$) frame $\left\{e_a\right\}$, $a = 1,2,3$, such that $\nabla^{h_{\fre}} e_a\vert_p = 0$. We compute at $p\in \Sigma$:
\begin{eqnarray*}
& \dd \mathrm{Tr}(\Theta) \vert_p = \sum_a \dd (\Theta(e_a,e_a))\vert_p = \sum_a (\nabla^{h_{\fre}}\Theta)(e_a,e_a)\vert_p + 2 \sum_a \Theta(\nabla^{h_{\fre}} e_a,e_a)\vert_p \\
& = \sum_a (\nabla^{h_{\fre}}_{e_a}\Theta)(e_a)\vert_p = \mathrm{div}_{h_{\fre}}(\Theta)\vert_p\, ,
\end{eqnarray*}

\noindent
and hence we conclude.	
\end{proof}

\begin{remark}
We will refer to a parallel Cauchy pair $(\fre, \Theta)$ whose shape operator is Codazzi as a \emph{Codazzi parallel Cauchy pair}. 
\end{remark}

\noindent
Proposition \ref{prop:alleqs} summarizes necessary conditions that a pair $(\fre,\Theta)$ needs to satisfy in order for the Lorentzian development of $(\Sigma,h_{\fre})$ to be a Ricci-flat Lorentzian four-manifold admitting a parallel spinor field. These conditions are satisfied by all examples in \cite{BaumMuller}.


\section{The topology and geometry of Cauchy pairs}
\label{sec:topologyCauchyPairs}


In this section we investigate the diffeomorphism and isometry type of oriented three-manifolds $\Sigma$ admitting a complete Cauchy pair $(\fre,\Theta)\in \Sol(\Sigma)$. 


\subsection{General considerations}

 
\begin{lemma}
\label{lemma:complete}
Let $(\fre,\Theta)$ be a complete Cauchy pair on $\Sigma$. The frame $\fre^{\sharp} = (e^{\sharp}_u , e^{\sharp}_l, e^{\sharp}_n)$ dual of $\fre$ is complete, that is, each of its elements is a complete vector field on $\Sigma$.  
\end{lemma}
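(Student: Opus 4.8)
The plan is to reduce the statement to the standard fact that a vector field of bounded norm on a complete Riemannian manifold is complete. First I would observe that, by the very definition of the metric $h_{\fre} = e_u\otimes e_u + e_l \otimes e_l + e_n \otimes e_n$ associated with the Cauchy pair, the coframe $\fre = (e_u, e_l, e_n)$ is orthonormal with respect to $h_{\fre}$. Consequently the dual frame $\fre^{\sharp} = (e^{\sharp}_u, e^{\sharp}_l, e^{\sharp}_n)$ is also $h_{\fre}$-orthonormal, so each of its elements has constant unit length, that is, $\vert e^{\sharp}_a\vert_{h_{\fre}} = 1$ for $a = u,l,n$.

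Next I would invoke the completeness of $(\Sigma, h_{\fre})$, which holds by hypothesis since the Cauchy pair is assumed complete, together with the elementary ODE criterion for completeness of vector fields. Concretely, fix $a\in \{u,l,n\}$, set $X = e^{\sharp}_a$, and let $\gamma\colon (\alpha,\beta)\to \Sigma$ be a maximal integral curve of $X$. The key estimate is that $\gamma$ has finite $h_{\fre}$-length over any bounded subinterval: since $\vert\dot\gamma(t)\vert_{h_{\fre}} = \vert X(\gamma(t))\vert_{h_{\fre}} = 1$, the length of $\gamma\vert_{[t_0,t_1]}$ equals exactly $t_1 - t_0$.

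Suppose, towards a contradiction, that $\beta < \infty$. For $t_0 < t_1 < \beta$ the distance satisfies $d_{h_{\fre}}(\gamma(t_0),\gamma(t_1)) \leq \vert t_1 - t_0\vert$, so $\gamma(t)$ is Cauchy as $t\to\beta$; by completeness of $(\Sigma,h_{\fre})$ it converges to some $p\in\Sigma$. Applying the local existence and uniqueness theorem for the flow of $X$ at $p$ then allows me to extend $\gamma$ beyond $\beta$, contradicting maximality. The symmetric argument rules out $\alpha > -\infty$, so $\gamma$ is defined on all of $\mathbb{R}$ and $X$ is complete. Since the three vector fields $e^{\sharp}_u, e^{\sharp}_l, e^{\sharp}_n$ are treated identically, this shows that $\fre^{\sharp}$ is a complete frame.

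The statement is genuinely elementary once the orthonormality is recorded, and I do not expect any real obstacle. The only point requiring a little care is the passage from finite length to convergence of the integral curve, which is precisely where the completeness of $(\Sigma, h_{\fre})$ as a \emph{Riemannian} manifold is used, via the Hopf--Rinow-type Cauchy argument above; boundedness of $\vert X\vert_{h_{\fre}}$, here even constancy, is what guarantees the finite-length estimate that feeds into it.
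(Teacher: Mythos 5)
Your proof is correct and follows essentially the same route as the paper: the paper simply observes that each element of $\fre$ has unit norm with respect to the complete metric $h_{\fre}$ and cites the standard fact (do Carmo, Exercise 11, p.~154) that a vector field of bounded norm on a complete Riemannian manifold is complete. You have merely written out the proof of that cited fact (finite-length estimate, Cauchy convergence, flow-box extension), which is a faithful expansion of the same argument.
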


\begin{proof}
Follows from the fact that $h_{\fre}$ is by assumption a complete Riemannian metric on $\Sigma$ respect to which each of the elements of $\fre$ has unit norm, see \cite[Page 154, Exercise 11]{Carmo} \footnote{Note however that there is a typo in Exercise 11, the correct condition being, using the notation of the exercise, $\vert X(p)\vert < c$ rather than $\vert X(p)\vert > c$.}. 
\end{proof}

\begin{lemma}
\label{lemma:flatfol}
Let $\fre = (e_u,e_l,e_n)$ be a complete Cauchy coframe. The distribution $\ker(e_u)\subset T\Sigma$ is integrable and defines a codimension one transversely orientable foliation in $(\Sigma,h_{\fre})$ whose leaves are complete and flat Riemann surfaces with respect to the metric induced by $h_{\fre}$.
\end{lemma}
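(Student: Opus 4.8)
The plan is to establish the three assertions of Lemma \ref{lemma:flatfol} in order: integrability of $\ker(e_u)$, transverse orientability of the resulting foliation, and completeness and flatness of the leaves. The starting point is the first Cartan-type equation for the parallel Cauchy coframe, namely $\dd e_u = \Theta(e_u)\wedge e_u$ from the differential system \eqref{eq:exder}. First I would observe that this relation immediately gives $e_u\wedge \dd e_u = e_u \wedge \Theta(e_u)\wedge e_u = 0$, so by the Frobenius theorem the distribution $\ker(e_u)$ is integrable; the wedge against $e_u$ of the right-hand side vanishes because it contains a repeated factor of $e_u$. Transverse orientability is then automatic, since the foliation is the kernel of a globally defined nowhere-vanishing one-form $e_u$, which provides a global co-orientation of the leaves.

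The core of the argument concerns the induced geometry on a leaf $L$. On $L$ we have $e_u\vert_L = 0$, so the metric induced by $h_{\fre}$ is $h_L = e_l\otimes e_l + e_n\otimes e_n$, and $(e_l\vert_L, e_n\vert_L)$ is an orthonormal coframe of $L$. The next step is to compute the exterior derivatives of $e_l$ and $e_n$ restricted to $L$. From \eqref{eq:exder} we have $\dd e_l = \Theta(e_l)\wedge e_u$ and $\dd e_n = \Theta(e_n)\wedge e_u$, and pulling these back to $L$ annihilates every term containing the factor $e_u$, so $\dd(e_l\vert_L) = 0$ and $\dd(e_n\vert_L) = 0$. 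Thus $L$ carries a closed orthonormal coframe, which by the first Cartan structure equations forces the Levi-Civita connection of $h_L$ to have vanishing connection one-forms in this coframe, hence vanishing curvature; the leaves are therefore flat. This is essentially the same computation already exploited in Example \ref{ep:tau3mu}, where a closed coframe was shown to yield a flat metric.

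The step I expect to require the most care is completeness of the leaves, since completeness of $(\Sigma,h_{\fre})$ does not in general pass to an arbitrary immersed submanifold. My plan is to use Lemma \ref{lemma:complete}, which guarantees that the dual vector fields $e^{\sharp}_l$ and $e^{\sharp}_n$ are complete on all of $\Sigma$. Because $\dd(e_l\vert_L) = \dd(e_n\vert_L) = 0$ and the two fields are orthonormal and tangent to $L$, one checks that $[e^{\sharp}_l, e^{\sharp}_n]$ is tangent to $L$ and in fact vanishes along $L$ (the structure functions of a closed orthonormal coframe are zero), so these complete commuting fields span $TL$ and generate a locally free, and after passing to the developing map isometric, action of a translation group on $L$. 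Integrating the flows of $e^{\sharp}_l$ and $e^{\sharp}_n$, whose completeness on $\Sigma$ keeps their trajectories within $L$, exhibits $(L, h_L)$ as a complete flat Riemann surface. The only subtlety is verifying that the flow lines genuinely remain inside the leaf rather than merely inside $\Sigma$; this follows because each field lies in $\ker(e_u)$ pointwise and $\ker(e_u)$ is the tangent distribution to the foliation, so the leaf is preserved by both flows. With integrability, co-orientation, flatness and completeness all in hand, the lemma follows.
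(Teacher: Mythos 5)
Your proposal is correct, and for the first two assertions (integrability via Cartan's criterion and flatness via the vanishing of the pulled-back $e_u$-terms, giving a closed orthonormal coframe on each leaf) it coincides with the paper's proof, merely spelling out the ``direct computation'' that the paper leaves implicit. Where you genuinely diverge is the completeness of the leaves: the paper disposes of this by citing a general result (\cite[Proposition 1.26]{Schliebner}, which shows that leaves of a foliation of a complete Riemannian manifold are complete in the induced metric), whereas you give a self-contained argument using Lemma \ref{lemma:complete} together with the commutativity of $e^{\sharp}_l$ and $e^{\sharp}_n$ --- in effect anticipating Lemma \ref{lemma:actionR2} --- to produce, for each leaf $L$ and $p\in L$, the orbit map $(t_1,t_2)\mapsto \Phi^{t_1}_{e_l}\circ\Phi^{t_2}_{e_n}(p)$, which is defined on all of $\mathbb{R}^2$, takes values in $L$ (tangency to $\ker(e_u)$ keeps the flows inside the leaf), and is a surjective local isometry from Euclidean $\mathbb{R}^2$ onto $(L,h_L)$ since the two fields are orthonormal and their flows commute. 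The paper's route is shorter and works for arbitrary foliations of complete manifolds; yours stays entirely inside the toolkit of the paper and yields more, namely that each leaf is a Riemannian homogeneous quotient of flat $\mathbb{R}^2$, which directly foreshadows item (1) of Proposition \ref{prop:generalformuniv}. The one step you should make explicit is the final inference from this picture to completeness: either invoke that a local isometry from a complete Riemannian manifold is a Riemannian covering (so $L$, being covered by $\mathbb{R}^2$, is complete), or that a Riemannian homogeneous space --- here $L$ admits a transitive action by the flows of the parallel, hence Killing, fields $e^{\sharp}_l\vert_L, e^{\sharp}_n\vert_L$ --- is complete; as written, the phrase ``exhibits $(L,h_L)$ as a complete flat Riemann surface'' compresses precisely the point at issue.
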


\begin{proof}
The first equation in the Cauchy differential system \eqref{eq:exder} immediately implies:
\begin{equation*}
e_u \wedge \dd e_u=0\, ,
\end{equation*}
	
\noindent
and thus Cartan's criterion implies in turn that $\ker(e_u)\subset T\Sigma$ defines an integrable transversely orientable codimension one distribution, whose associated foliation we denote by $\cF_{\fre}$. Let $p \in \Sigma$ and denote by $\cF_{\fre , p} \subset \Sigma$ the maximal leaf of $\cF_{\fre}$ passing through $p$. The cotangent space of $\cF_{\fre , p}$ is spanned over $C^{\infty}(\cF_{\fre , p})$ by the restriction of $e_l$ and $e_n$:
\begin{equation*}
T^{\ast}\cF_{\fre , p} = \mathrm{Span}_{C^{\infty}(\cF_{\fre , p})}(e_l\vert_{T\cF_{\fre , p}},e_n\vert_{T\cF_{\fre , p}})\, .
\end{equation*}
	
\noindent
Furthermore:
\begin{equation*}
h_{\fre}\vert_{\cF_{\fre , p}} = e_l\vert_{T\cF_{\fre , p}}\otimes e_l\vert_{T\cF_{\fre , p}} + e_n\vert_{T\cF_{\fre , p}}\otimes e_n\vert_{T\cF_{\fre , p}}\, .
\end{equation*}

\noindent
A direct computation, using the fact that $\fre$ is a parallel Cauchy coframe, shows that $(e_l\vert_{T\cF_{\fre , p}},e_n\vert_{T\cF_{\fre , p}})$ is a flat coframe with respect to the Levi-Civita connection of the metric induced by $h_{\fre}$ whence $h_{\fre}\vert_{\cF_{\fre , p}}$ is flat. The fact that the leaves of $\cF_{\fre}$ equipped with the metric induced by $h_{\fre}$ are complete manifolds follows from completeness of $h_{\fre}$ and is proved explicitly in \cite[Proposition 1.26]{Schliebner}.
\end{proof}

\noindent
Since the leaves of $\cF_{\fre}$ are complete and flat they must be isometric to either the euclidean plane, the euclidean cylinder or a flat torus. As we will see momentarily, this poses strong constraints on the differentiable topology of $\Sigma$. Given a Cauchy pair $(\fre,\Theta)$, the cohomological condition occurring in the Cauchy differential system \eqref{eq:exder} guarantees that there exists a function $\mathfrak{f}\in C^{\infty}(\Sigma)$ such that:
\begin{equation*}
\Theta(e_u) = -\dd \mathfrak{f}\, .
\end{equation*}

\noindent
Therefore, by the first equation in \eqref{eq:exder}, the one-form $\hat{e}_u := e^{\mathfrak{f}}e_u\in \Omega^1(\Sigma)$ is closed and satisfies $\Ker(\hat{e}_u) = \Ker(e_u)$, implying that we can consider $\cF_{\fre}\subset \Sigma$ as a foliation defined by the kernel of the nowhere vanishing closed one-form $\hat{e}_u$, a type of foliation that has been extensively studied in the literature, see for example \cite{Conlon,Tondeur}. It can be easily seen that the metric $h_{\fre}$ will not be, in general, bundle-like with respect to $\cF_{\fre}$. On the other hand, given a Cauchy pair $(\fre,\Theta)$, the following \emph{modified} Riemannian metric:
\begin{equation*}
h_{\hat{\fre}} = \hat{e}_u\otimes \hat{e}_u + e_l\otimes e_l + e_n\otimes e_n\, , \qquad \hat{\fre} = (\hat{e}_u ,e_l ,e_n)\, ,
\end{equation*}

\noindent
is indeed bundle-like, that is, it satisfies the following condition:
\begin{equation*}
\cL_{v} h_{\hat{\fre}}\vert_{ T\cF_{\fre}^{\perp_{h_{\fre}}}} = 0\, , \qquad \forall\,\, v\in \Gamma(T\cF_{\fre})\, .
\end{equation*}

\noindent
In other words, $h_{\hat{\fre}}\vert_{ T\cF_{\fre}^{\perp_{h_{\fre}}}}$ is a holonomy invariant transversal metric.

\begin{remark}
By Lemma \ref{lemma:complete}, $e^{\sharp}_u \in \mathfrak{X}(\Sigma)$ is a complete vector field on $\Sigma$. However, the same statement may not hold for $\hat{e}_u^{\sharp}\in \mathfrak{X}(\Sigma)$, the metric dual of $\hat{e}_u$ with respect to $\hat{h}_{\fre}$.
\end{remark}

\begin{definition}
A Cauchy pair $(\fre,\Theta)$ is \emph{fully complete} if it is complete and in addition $\hat{e}_u^{\sharp}\in \mathfrak{X}(\Sigma)$ is complete. 
\end{definition}

\noindent
The notion of fully complete Cauchy pair is convenient to obtain global results about Cauchy pairs by using completeness of $\hat{e}_u$ to identify the leaves of $\cF_{\fre}\subset \Sigma$.  

\begin{proposition}
\label{prop:generalformuniv}
Let $(\fre,\Theta)$ be a fully complete Cauchy pair on $\Sigma$ with associated foliation $\cF_{\fre}\subset \Sigma$. The following holds:

\begin{enumerate}[leftmargin=*]
\item All leaves are diffeomorphic to a model leaf given by either the plane $\mathbb{R}^2$, the cylinder or the torus.
		
\item Either all leaves are closed or all leaves are dense in $\Sigma$.
		
\item The Riemannian universal cover of $(\Sigma,h_{\fre})$ is isometric to $(\mathbb{R}^3, \bar{h}_{\fre})$ with metric $\bar{h}_{\fre}$ given by:
\begin{equation*}
\bar{h}_{\fre} \eqdef e^{2\mathfrak{u}} \dd x\otimes \dd x + \mathfrak{h}_x\, ,
\end{equation*} 
		
\noindent
where $x$ is the first Cartesian coordinate of $\mathbb{R}^3$, $\mathfrak{u}\in C^{\infty}(\mathbb{R}^3)$ is a smooth function and, for every $x\in \mathbb{R}$, $h_x$ is a flat euclidean metric on $\left\{ x\right\}\times \mathbb{R}^2 \subset \mathbb{R}^3$. If $(\fre,\Theta)$ is not fully complete the previous characterization is only guaranteed to hold locally.
\end{enumerate}
\end{proposition}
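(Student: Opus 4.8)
The plan is to deduce all three statements from one structural fact: the closed, nowhere-vanishing one-form $\hat e_u = e^{\mathfrak f} e_u$ (where $\Theta(e_u) = -\dd\mathfrak f$) together with its complete transverse dual $\hat e_u^{\sharp}$ exhibits $\cF_{\fre}$ as a suspension-type foliation, and the transverse flow rigidifies the leaf structure. I would begin with statement (3). Pulling $\fre$, $\Theta$ and $\mathfrak f$ back along the Riemannian covering $\pi\colon\wt{\Sigma}\to\Sigma$ (denoting lifts by the same symbols), the differential system \eqref{eq:exder} and Lemmas \ref{lemma:complete}--\ref{lemma:flatfol} survive, $h_{\fre}$ stays complete, and $\hat e_u$ is a closed one-form on the simply connected $\wt{\Sigma}$, hence exact: $\hat e_u = \dd x$ for a submersion $x\colon\wt{\Sigma}\to\mathbb{R}$ whose level sets contain the leaves. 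The lift $X$ of $\hat e_u^{\sharp}$ is complete (integral curves lift and the downstairs flow is global by full completeness), and $\dd x(X) = \hat e_u(\hat e_u^{\sharp}) = 1$, so its flow $\phi_s$ satisfies $x\circ\phi_s = x + s$ and preserves $\cF_{\fre}$ (because $\hat e_u$ is closed and $\hat e_u(\hat e_u^{\sharp})\equiv 1$, one checks $\hat e_u([\,\hat e_u^{\sharp},v\,])=0$ for $v\in\ker\hat e_u$). Fixing a leaf $L_0\subset x^{-1}(0)$, the map $(s,p)\mapsto\phi_s(p)$ is injective; its image is open (it is the image of a submersion) and saturated, hence clopen, so by connectedness it is a diffeomorphism $\mathbb{R}\times L_0\to\wt{\Sigma}$. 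Thus $L_0$ is a simply connected complete flat surface, hence diffeomorphic (indeed isometric) to $\mathbb{R}^2$, and $\wt{\Sigma}\cong\mathbb{R}^3$.

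For the metric I would take global coordinates $(y,z)$ on $L_0\cong\mathbb{R}^2$ and transport them by $\phi_s$, obtaining coordinates $(x,y,z)$ on $\mathbb{R}^3$ in which $\partial_x = X = \hat e_u^{\sharp}$ and $\partial_y,\partial_z$ are tangent to the leaves $\{x=\text{const}\}$. Writing $\mathfrak u := -\mathfrak f$ and using $\hat e_u^{\sharp} = e^{-\mathfrak f}e_u^{\sharp}$, I would compute $h_{\fre}(\hat e_u^{\sharp},\hat e_u^{\sharp}) = e^{-2\mathfrak f} = e^{2\mathfrak u}$ and, crucially, $h_{\fre}(\hat e_u^{\sharp},v) = e^{-\mathfrak f}\,e_u(v) = 0$ for every leaf-tangent $v\in\ker e_u$, since $e_u^{\sharp}$ is $h_{\fre}$-orthogonal to $\ker e_u$. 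Hence there are no $\dd x\,\dd y$ or $\dd x\,\dd z$ cross terms and $h_{\fre} = e^{2\mathfrak u}\,\dd x\otimes\dd x + \mathfrak h_x$, where $\mathfrak h_x$ is the metric induced on $\{x\}\times\mathbb{R}^2$, which is flat by Lemma \ref{lemma:flatfol}. This is exactly the asserted form; the last sentence of (3) records that the product structure uses completeness of $\hat e_u^{\sharp}$, so without it only a local flow-box statement survives.

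For (1) I would run the same clopen/saturation argument on $\Sigma$ itself: the flow $\phi_s$ is complete (full completeness) and maps leaves to leaves, so the saturation $\mathrm{Sat}(L)=\bigcup_s\phi_s(L)$ of any leaf is open and saturated, its complement is a union of such open saturations, and connectedness of $\Sigma$ forces $\mathrm{Sat}(L)=\Sigma$. Thus every leaf equals some $\phi_s(L)$ and is diffeomorphic to the fixed model leaf $L$, which, being complete, flat and orientable, is the plane, the cylinder or the torus. For (2) I would introduce the period homomorphism $\rho\colon\pi_1(\Sigma)\to\mathbb{R}$, $\rho([\gamma])=\int_\gamma\hat e_u$, with image $P\subset\mathbb{R}$; since $x\circ\gamma = x + \rho(\gamma)$ on $\wt{\Sigma}$, the function $x$ descends to $\bar x\colon\Sigma\to\mathbb{R}/P$ whose fibres are unions of leaves. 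A subgroup of $\mathbb{R}$ is either discrete or dense: when $P$ is discrete, $\mathbb{R}/P$ is $\mathbb{R}$ or $S^1$ and the complete transverse flow makes $\bar x$ a locally trivial fibration, so every leaf is closed; when $P$ is dense, $\phi_s$ shifts the $\bar x$-value by the dense set $P$, forcing every leaf to be dense. This is the dichotomy of (2).

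The step I expect to be most delicate is (2). The classical closed-or-dense dichotomy for a foliation defined by a closed one-form is normally established on \emph{compact} manifolds (à la Tischler), whereas here compactness is replaced by the weaker hypothesis of full completeness. The real work is therefore to show that completeness of the transverse flow $\phi_s$ suffices to promote the submersion $\bar x$ to an honest fibration in the discrete-period case, and to propagate density across all of $\Sigma$ in the dense-period case, including the bookkeeping that the connected components of the fibres of $\bar x$ are exactly the leaves. By contrast, the orthogonal splitting in (3) is essentially forced once one verifies $\hat e_u^{\sharp}\perp_{h_{\fre}}\ker e_u$, and (1) is a soft connectedness argument.
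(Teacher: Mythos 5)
Your proposal is correct, and its engine is the same as the paper's: the foliation $\cF_{\fre}$ is defined by the closed nowhere-vanishing one-form $\hat e_u = e^{\mathfrak f}e_u$, and full completeness supplies a complete transverse field $\hat e_u^{\sharp}$ whose flow preserves $\cF_{\fre}$ and maps leaves to leaves diffeomorphically; this is precisely how the paper proves item (1). The genuine difference is that you prove items (2) and (3) directly, whereas the paper outsources them to the literature: (2) is quoted from \cite[Proposition 5.1]{ConlonII}, and the diffeomorphism $\bar\Sigma\cong\mathbb{R}\times\bar{\cF}_{\fre}$ behind (3) is quoted from \cite[Proposition 8]{LeistnerSchliebner}. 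Your replacements are sound: on the universal cover $\hat e_u=\dd x$ is exact, the flow of the lifted $\hat e_u^{\sharp}$ gives the injective local diffeomorphism $(s,p)\mapsto\phi_s(p)$, and the saturation/clopen argument plus connectedness makes it onto; this yields the product structure, forces $L_0$ to be a simply connected complete flat surface (hence $\mathbb{R}^2$), and gives (1) as a by-product --- note that you run the logic in the opposite order from the paper, which proves (1) first and feeds it into (3) to conclude $\bar{\cF}_{\fre}=\mathbb{R}^2$. For (2), the period homomorphism $\rho(\gamma)=\int_\gamma\hat e_u$ together with the discrete-versus-dense dichotomy for subgroups of $\mathbb{R}$ is the standard mechanism, and it works here because full completeness provides the global flow; the compactness (Tischler-type) hypothesis you worry about is not needed. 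Two small touch-ups: in the dense case the crisp statement is that the return-time set $\{ s\in\mathbb{R} : \phi_s(L)=L\}$ coincides with the period group $P$, so for $q=\phi_s(p)$ with $p\in L$ one approximates $s$ by $s_n\in P$ and gets $\phi_{s_n}(p)\in L$ converging to $q$; and in the discrete case you do not need local triviality of $\bar x$, only that leaves are the connected components of its closed fibres. What your route buys is a self-contained proof (including the explicit verification of the vanishing cross terms $h_{\fre}(\hat e_u^{\sharp},v)=0$, which the paper leaves implicit); what the paper's buys is brevity, its citations covering general statements of which this proposition is a special case.
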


\begin{remark}
Item (3) in the previous proposition recovers, in the specific case of four Lorentzian dimensions, items (1) and (2) in \cite[Theorem 4]{LeistnerLischewski}. Indeed, the function $e^{2\mathfrak{u}}$ can be shown to determine the norm of the one-form $u^{\perp}$ occurring in the original formulation of the constraint equations \eqref{eq:cauchyul}. The norm of $u^{\perp}$ is denoted by $u^2$ in Op. Cit. 
\end{remark}

\begin{proof}
\emph{Bar} over a symbol will denote lift to the universal cover of $\Sigma$, denoted by $\bar{\Sigma}$. We prove the proposition point by point:

\begin{enumerate}[leftmargin=*]
	\item Since $\cF_{\fre}$ is defined by a closed nowhere-vanishing one-form the fact that all its leaves must be diffeomorphic is classical, see \cite{MilnorII,Conlon}. To motivate it, recall that the Lie derivative of $\hat{e}_u$ along $\hat{e}^{\sharp}_u$ is zero (note that this is not true in general for $e_u$ and $e^{\sharp}_u$). Hence, the flow $(\psi_t)_{t\in \mathbb{R}}$ defined by the complete vector field $\hat{e}^{\sharp}_u$ preserves the leaves of $\cF_{\fre}$, that is, maps leaves to leaves diffeomorphically. Furthermore, for every $p,q\in \Sigma$ there exists a $t_0\in \mathbb{R}$ such that:
	\begin{equation*}
	\psi_{t_0}\vert_{\cF_{\fre,p}}\colon \cF_{\fre,p} \to \cF_{\fre,q}\, .
	\end{equation*}
	
	\noindent
	Hence, all leaves of $\cF_{\fre}$ are diffeomorphic and by Lemma \ref{lemma:flatfol} they must be all diffeomorphic to either the plane, the cylinder or the torus.
	
	\item Follows from \cite[Proposition 5.1]{ConlonII}.
	
	\item  The fact that the universal cover $\bar{\Sigma}$ is diffeomorphic to $\mathbb{R}\times \bar{\cF}_{\fre}$, where $\bar{\cF}_{\fre}$ denotes the universal cover of the typical leaf of $\cF_{\fre}$ is proven in detail in \cite[Proposition 8]{LeistnerSchliebner}. Furthermore, the foliation $\cF_{\fre}\subset \Sigma$ lifts to the foliation whose leaves are given by $\left\{ x\right\}\times \bar{\cF}_{\fre}\subset \mathbb{R}\times \bar{\cF}_{\fre}$ for $x\in \mathbb{R}$. Since the typical leaf of $\cF_{\fre}$ is either the plane, the cylinder or the torus, then $\bar{\cF}_{\fre} = \mathbb{R}^2$ and therefore $\bar{\Sigma} = \mathbb{R}^3$. The lift $\bar{e}_u$ of $e_u$ to $\bar{\Sigma}$ is orthogonal to $T^{\ast}\bar{\cF}_{\fre}\subset T^{\ast}\bar{\Sigma}$, whence:
	\begin{equation*}
	\bar{e}_u = e^{\fru} \dd x\, , \qquad \fru \in C^{\infty}(\mathbb{R}^3)\, ,
	\end{equation*}
	
	\noindent
	where $\mathfrak{u}$ is a function on $\mathbb{R}^3$ satisfying $\bar{\Theta}(\bar{e}_u) = - \dd\fru$. Since the distribution $T\bar{\cF}_{\fre}\subset T\bar{\Sigma}$ is defined by the kernel of $\bar{e}_u$ we conclude that the lift of $h_{\fre}$ to $\bar{\Sigma}$ can be written as follows:
	\begin{equation*}
	\bar{h}_{\fre} \eqdef e^{2\mathfrak{u}} \dd x\otimes \dd x + \mathfrak{h}_x\, ,
	\end{equation*}
	
	\noindent
	for a family $\left\{ \mathfrak{h}_x \right\}_{x\in \mathbb{R}}$ of two-dimensional metrics on $\mathbb{R}^2$, which must be flat by Lemma \ref{lemma:flatfol}.
\end{enumerate}
\end{proof}

\noindent
The leaves of the foliation $\cF_{\fre}$ are all mutually diffeomorphic but a priori may not be mutually isometric since (the dual of) $\hat{e}_u$ which generates the flow that allows to identify different leaves of $\cF_{\fre}$ may not be an isometry of $h_{\fre}$. We will refer to the type of any leaf of $\cF_{\fre}$ as the \emph{typical leaf} of $\cF_{\fre}$, considered as a Riemann surface with the induced orientation. If the typical leaf of $\cF_{\fre}$ is compact we obtain the following result.

\begin{proposition}
Let $(\fre,\Theta)$ be a fully complete Cauchy pair on $\Sigma$ with associated foliation $\cF_{\fre}\subset \Sigma$. If the typical leaf of $\cF_{\fre}$ is a flat torus then either $\Sigma = \mathbb{R}\times T^2$ or $\Sigma$ admits the structure of a fiber bundle $\pi_{\fre} \colon \Sigma \to S^1$ inducing $\cF_{\fre}$.
\end{proposition}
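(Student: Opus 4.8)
The plan is to run the entire argument through the complete flow $(\psi_t)_{t\in\mathbb{R}}$ of the vector field $\hat{e}_u^{\sharp}$ already employed in the proof of Proposition \ref{prop:generalformuniv}, recording the properties we shall use: it is globally defined by the fully complete hypothesis, it preserves $\hat{e}_u$ and hence maps leaves of $\cF_{\fre}$ to leaves diffeomorphically, it is transverse to every leaf because $\hat{e}_u(\hat{e}_u^{\sharp}) = 1 \neq 0$, and it acts transitively on the set of leaves. Since the typical leaf of $\cF_{\fre}$ is a flat torus, Proposition \ref{prop:generalformuniv}(1) shows that \emph{every} leaf is diffeomorphic to $T^2$, and in particular is compact and therefore a closed subset of $\Sigma$; this compactness is the feature that will make the whole scheme rigid.

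First I would fix a base leaf $L_0 = \cF_{\fre,p}$ and consider the smooth map
\[
F\colon \mathbb{R}\times L_0 \to \Sigma\, , \qquad F(t,x) = \psi_t(x)\, .
\]
Its differential sends $\partial_t$ to the transverse vector $\hat{e}_u^{\sharp}$ and carries $T_x L_0$ isomorphically onto $T_{\psi_t(x)}(\psi_t L_0)$; since $\hat{e}_u^{\sharp}$ is transverse to the latter subspace, $dF$ is everywhere an isomorphism, so $F$ is a local diffeomorphism, and it is surjective by transitivity of the flow on leaves.

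Next I would study the \emph{return group}
\[
\Lambda \eqdef \left\{ t\in\mathbb{R} : \psi_t(L_0) = L_0 \right\}\, ,
\]
a subgroup of $\mathbb{R}$. Using that $L_0$ is closed and that two leaves are either equal or disjoint, one verifies that $\Lambda$ is a \emph{closed} subgroup, while transversality of $\hat{e}_u^{\sharp}$ forbids $\Lambda = \mathbb{R}$; hence $\Lambda$ is either $\{0\}$ or $c\mathbb{Z}$ for some $c>0$. If $\Lambda = \{0\}$ the leaves $\psi_t(L_0)$ are pairwise distinct, so $F$ is injective (within-leaf injectivity being immediate since each $\psi_t\vert_{L_0}$ is a diffeomorphism); together with surjectivity and the local-diffeomorphism property this makes $F$ a diffeomorphism $\mathbb{R}\times T^2 \xra{\sim} \Sigma$, the first alternative. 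If $\Lambda = c\mathbb{Z}$, writing $\phi = \psi_c\vert_{L_0}$ for the return diffeomorphism of $L_0$, the identity $F(t+c,x) = F(t,\phi(x))$ shows that $F$ descends to the mapping torus of $\phi$; a short injectivity check, again via the equal-or-disjoint property of leaves, identifies this mapping torus with $\Sigma$, and its canonical projection onto $\mathbb{R}/c\mathbb{Z}\cong S^1$ furnishes the desired bundle $\pi_{\fre}\colon\Sigma\to S^1$ with fiber $T^2$ whose fibers are exactly the leaves of $\cF_{\fre}$.

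The main obstacle is not any single computation but the careful promotion of the surjective local diffeomorphism $F$ to a genuine fiber bundle. The decisive leverage is the completeness of $\hat{e}_u^{\sharp}$ granted by the fully complete hypothesis, which provides a globally defined flow, together with the compactness of the leaves, which is precisely what forces $\Lambda$ to be closed and hence discrete, ruling out any intermediate dense behaviour; once this discreteness is established the two cases $\Lambda=\{0\}$ and $\Lambda=c\mathbb{Z}$ match verbatim the two conclusions of the statement.
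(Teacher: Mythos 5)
Your proof is correct, but it takes a genuinely different route from the paper's. The paper disposes of this proposition in one line: it invokes \cite[Corollary 8.6]{Sharpe} to produce a fibration inducing $\cF_{\fre}$ over a one-dimensional base, and then uses that every locally trivial fibration over $\mathbb{R}$ is trivial, the dichotomy $\mathbb{R}$ versus $S^1$ for the base yielding the two alternatives. You instead prove the needed special case of that fibration theorem by hand: the globally defined flow $(\psi_t)_{t\in\mathbb{R}}$ of $\hat{e}_u^{\sharp}$ (this is exactly where full completeness enters, as in Proposition \ref{prop:generalformuniv}), the surjective local diffeomorphism $F(t,x)=\psi_t(x)$, and the return group $\Lambda\subset\mathbb{R}$, whose closedness you correctly trace back to compactness of the torus leaves (a compact leaf is closed in $\Sigma$, so a limit of returns is a return by the equal-or-disjoint property) --- precisely the feature that fails for plane or cylinder leaves, where the analogous group can be dense. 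Your dichotomy $\Lambda=\{0\}$ versus $\Lambda=c\mathbb{Z}$ then reproduces the two alternatives, with the added precision that in the second case $\Sigma$ is exhibited as the mapping torus of the return diffeomorphism $\psi_c\vert_{L_0}$, so the fibers of $\pi_{\fre}$ are exactly the leaves of $\cF_{\fre}$. What the paper's route buys is brevity and a direct link to standard foliation theory; what yours buys is self-containedness (no external fibration theorem, whose hypotheses in the paper's later invocation include compactness of $\Sigma$, which is not assumed here), an explicit description of the bundle, and a transparent role for each hypothesis. All the individual steps --- transitivity of the flow on leaves, closedness and properness of $\Lambda$, injectivity of $F$ and of its descent to the mapping torus --- check out.
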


\begin{proof}
Follows directly from \cite[Corollary 8.6]{Sharpe} by using the fact that every locally trivial fibration over $\mathbb{R}$ is trivial as well as the fact that if the leaves of $\cF_{\fre}$ are compact then they must be diffeomorphic to the torus.
\end{proof}

\begin{lemma}
\label{lemma:actionR2}
Let $(\fre,\Theta)$ be a complete Cauchy pair on $\Sigma$ with associated foliation $\cF_{\fre}\subset \Sigma$. Then, $\Sigma$ admits a canonical locally free action of $\mathbb{R}^2$ whose orbits are the leaves of $\cF_{\fre}$. 
\end{lemma}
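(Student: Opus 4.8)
The plan is to realize the $\mathbb{R}^2$-action as the joint flow of the two tangential vector fields $e^{\sharp}_l$ and $e^{\sharp}_n$. First I would observe that, since $\fre=(e_u,e_l,e_n)$ is $h_{\fre}$-orthonormal, the dual vectors satisfy $e_u(e^{\sharp}_l)=e_u(e^{\sharp}_n)=0$ and hence span $\ker(e_u)=T\cF_{\fre}$ pointwise, so they are everywhere tangent to the leaves; by Lemma \ref{lemma:complete} both are complete vector fields on $\Sigma$. These are the natural infinitesimal generators of the sought action, and they are intrinsically attached to $(\fre,\Theta)$, which is what makes the resulting action \emph{canonical}.

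The key step is to show that $e^{\sharp}_l$ and $e^{\sharp}_n$ commute, i.e. $[e^{\sharp}_l,e^{\sharp}_n]=0$. I would verify this by evaluating the bracket against the coframe and using the identity $\dd\alpha(X,Y)=X(\alpha(Y))-Y(\alpha(X))-\alpha([X,Y])$ for each $\alpha\in\{e_u,e_l,e_n\}$. Since each $\alpha(e^{\sharp}_l)$ and $\alpha(e^{\sharp}_n)$ is a constant (either $0$ or $1$) by orthonormality, the first two terms vanish, leaving $\alpha([e^{\sharp}_l,e^{\sharp}_n])=-\dd\alpha(e^{\sharp}_l,e^{\sharp}_n)$. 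The parallel Cauchy differential system \eqref{eq:exder} writes each $\dd\alpha$ as $\Theta(\cdot)\wedge e_u$, and because $e_u$ annihilates both $e^{\sharp}_l$ and $e^{\sharp}_n$, every such $\dd\alpha(e^{\sharp}_l,e^{\sharp}_n)$ is zero. Hence all three coframe components of $[e^{\sharp}_l,e^{\sharp}_n]$ vanish, so the bracket itself is zero. This is the heart of the argument and the one place where the precise form of the Cauchy equations is used; I expect it to be the main obstacle, though it is a short computation rather than a deep one.

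Granting the commutation, two complete commuting vector fields integrate to a smooth action of the abelian group $\mathbb{R}^2$ on $\Sigma$, given by $(s,t)\cdot p=\phi_s\circ\psi_t(p)$, where $\phi$ and $\psi$ are the globally defined flows of $e^{\sharp}_l$ and $e^{\sharp}_n$; here completeness is exactly what upgrades the local flows to a genuine global action. Since $e^{\sharp}_l$ and $e^{\sharp}_n$ are everywhere linearly independent, the orbit map has full rank $2$ at every point, so all stabilizers are discrete and the action is locally free.

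Finally I would identify the orbits with the leaves of $\cF_{\fre}$. The orbit through any $p$ is a connected immersed $2$-submanifold whose tangent space is spanned by $e^{\sharp}_l,e^{\sharp}_n$, hence an integral submanifold of the involutive distribution $\ker(e_u)=T\cF_{\fre}$ (Lemma \ref{lemma:flatfol}); thus each orbit lies in a single leaf and, having full leaf-dimension, is open in that leaf. As distinct orbits are disjoint, each leaf is partitioned into disjoint open orbits, and connectedness of the leaf forces a single orbit to exhaust it. Therefore the orbits of the $\mathbb{R}^2$-action coincide with the leaves of $\cF_{\fre}$, as required. Apart from the commutator identity, the remaining steps are standard facts about flows of complete commuting vector fields and about integral manifolds of involutive distributions.
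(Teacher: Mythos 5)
Your proof is correct and takes essentially the same route as the paper: both realize the action through the flows of the complete vector fields $e_l^{\sharp}$ and $e_n^{\sharp}$, prove that they commute, and use completeness plus the fact that they span $T\cF_{\fre}$ to identify orbits with leaves. The only difference is cosmetic: you verify $[e_l^{\sharp},e_n^{\sharp}]=0$ directly from the exterior system \eqref{eq:exder} via Cartan's formula (which, as you note, only uses that $e_u$ annihilates both vectors), whereas the paper computes the bracket as $\nabla^{h_{\fre}}_{e_l^{\sharp}}e_n^{\sharp}-\nabla^{h_{\fre}}_{e_n^{\sharp}}e_l^{\sharp}$, relying on torsion-freeness and the symmetry of $\Theta$.
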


\begin{proof}
Consider a Cauchy pair $(\fre,\Theta)$ and define the map:
\begin{equation*}
\Psi \colon \mathbb{R}^2 \times \Sigma \to \Sigma\, , \qquad (t_1,t_2,p) \mapsto \Phi^{t_1}_{e_l}\circ \Phi^{t_2}_{e_n}(p)\, ,
\end{equation*}

\noindent
where $\Phi^{t_1}_{e_l}$ (respectively $\Phi^{t_2}_{e_n}$) denotes the flow generated by $e^{\sharp}_l$ (respectively $e^{\sharp}_n$) at the \emph{time} $t_1$ (respectively $t_2$). Using that $\fre$ is a solution of the Cauchy differential system, we obtain:
\begin{equation*}
[e_l^{\sharp} , e_n^{\sharp}] = \nabla^{h_{\fre}}_{e_l^{\sharp}} e_n^{\sharp} - \nabla^{h_{\fre}}_{e_n^{\sharp}} e_l^{\sharp} = 0\, ,
\end{equation*}

\noindent
hence $\Psi$ defines a smooth action of $\mathbb{R}^2$ on $\Sigma$, which, since both $e_l$ and $e_n$ are nowhere vanishing, is locally free. Furthermore, the fact that $e_l^{\sharp}$ and $e_n^{\sharp}$ are complete and span $T\cF_{\fre} \subset T\Sigma$ implies that the orbits of $\Phi$ correspond to the leaves of $\cF_{\fre}$.
\end{proof}

\noindent
Locally free actions of the group $\mathbb{R}^2$ on three-manifolds have been extensively studied extensively in the literature, see \cite{ArrautCraizer,ChateletRW,Rosenberg,RosenbergRW} and references therein, especially in relation with the problem of finding the number of nowhere vanishing and everywhere linearly independent commuting vector fields on a compact three-manifold.

\begin{proposition}
Let $(\fre,\Theta)$ be a fully complete Cauchy pair on $\Sigma$ such that the restriction of $\Theta$ to $T\cF_{\fre}\subset T\Sigma$ vanishes, that is, $\Theta\vert_{T\cF_{\fre}\times T\cF_{\fre}} = 0$. Then, $\Sigma$ is diffeomorphic to $T^{k}\times \mathbb{R}^{3-k}$ for some integer $k\in \left\{ 0,1,2,3\right\}$.
\end{proposition}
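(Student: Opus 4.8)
The plan is to exploit the hypothesis $\Theta\vert_{T\cF_{\fre}\times T\cF_{\fre}}=0$ to turn $\fre$, after the usual conformal rescaling of $e_u$, into a \emph{closed} coframe, and then to read off the topology of $\Sigma$ from the locally free $\RR^3$-action that such a coframe generates.

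First I would write $\Theta=\sum_{a,b}\Theta_{ab}\,e_a\otimes e_b$ with $a,b\in\{u,l,n\}$ and note that $T\cF_{\fre}=\ker(e_u)=\mathrm{Span}(e_l^{\sharp},e_n^{\sharp})$, so the hypothesis reads $\Theta_{ll}=\Theta_{ln}=\Theta_{nn}=0$. Hence $\Theta(e_l)=\Theta_{ul}\,e_u$ and $\Theta(e_n)=\Theta_{un}\,e_u$ are both proportional to $e_u$, and the last two equations of the parallel Cauchy differential system \eqref{eq:exder} collapse to $\dd e_l=\Theta_{ul}\,e_u\wedge e_u=0$ and $\dd e_n=\Theta_{un}\,e_u\wedge e_u=0$. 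Thus $e_l$ and $e_n$ are closed. On the other hand, the cohomological condition in \eqref{eq:exder} always yields $\mathfrak{f}\in C^{\infty}(\Sigma)$ with $\Theta(e_u)=-\dd\mathfrak{f}$, and, as recorded before Proposition \ref{prop:generalformuniv}, $\hat{e}_u=e^{\mathfrak{f}}e_u$ is then closed. Therefore $\hat{\fre}=(\hat{e}_u,e_l,e_n)$ is a global coframe of \emph{closed} one-forms on $\Sigma$.

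Next I would pass to the dual frame $(\hat{e}_u^{\sharp},e_l^{\sharp},e_n^{\sharp})$ of $\hat{\fre}$, where $\hat{e}_u^{\sharp}=e^{-\mathfrak{f}}e_u^{\sharp}$ is the $h_{\hat{\fre}}$-dual of $\hat{e}_u$, and where $e_l^{\sharp},e_n^{\sharp}$ coincide with the corresponding members of $\fre^{\sharp}$ since only the $e_u$-direction has been rescaled. Because a coframe of closed one-forms satisfies $\omega([X_i,X_j])=-\dd\omega(X_i,X_j)=0$ for all its members $\omega$ and dual fields $X_i$, the three fields $\hat{e}_u^{\sharp},e_l^{\sharp},e_n^{\sharp}$ pairwise commute; they are pointwise linearly independent because $\hat{\fre}$ is a coframe. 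Moreover $e_l^{\sharp}$ and $e_n^{\sharp}$ are complete by Lemma \ref{lemma:complete}, while $\hat{e}_u^{\sharp}$ is complete precisely by the \emph{full completeness} hypothesis. Hence the commuting complete flows assemble into a smooth action $\RR^3\times\Sigma\to\Sigma$, exactly as in the proof of Lemma \ref{lemma:actionR2} but now in all three directions.

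Finally, since the infinitesimal generators are everywhere linearly independent the action is locally free and every orbit is open; as $\Sigma$ is connected there is a single orbit, so the action is transitive. The stabilizer $\Gamma$ is then a discrete subgroup of the abelian group $\RR^3$, hence a lattice $\Gamma\cong\ZZ^{k}$ with $k\in\{0,1,2,3\}$, and $\Sigma\cong\RR^3/\Gamma\cong T^{k}\times\RR^{3-k}$, as claimed. The conceptual heart of the argument is the opening observation that $\Theta\vert_{T\cF_{\fre}}=0$ forces $e_l$ and $e_n$ to be closed; the remaining care is bookkeeping of completeness—matching the field made complete by the hypothesis, namely $\hat{e}_u^{\sharp}=e^{-\mathfrak{f}}e_u^{\sharp}$, with the correct member of the dual frame—together with the standard classification of transitive, locally free $\RR^3$-actions on a connected three-manifold, which is exactly where full completeness is indispensable.
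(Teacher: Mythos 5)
Your proof is correct and follows essentially the same route as the paper: the hypothesis makes $\hat{\fre}=(\hat{e}_u,e_l,e_n)$ a closed coframe, its dual frame is a commuting triple of complete vector fields generating a locally free $\mathbb{R}^3$-action that is transitive by connectedness, and the stabilizer is a lattice $\mathbb{Z}^k$, giving $\Sigma\cong\mathbb{R}^3/\mathbb{Z}^k\cong T^k\times\mathbb{R}^{3-k}$. The only differences are cosmetic: you spell out why $e_l$ and $e_n$ are closed and classify the discrete stabilizer directly, whereas the paper asserts the commutation and cites \cite[Chapter 4]{Conlon} for the stabilizer step.
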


\begin{proof}
Let $(\fre,\Theta)$ be a Cauchy pair such that  $\Theta\vert_{T\cF_{\fre}\times T\cF_{\fre}} = 0$. Then, $\hat{\fre}^{\sharp}$ is a global frame of commuting vector fields, which can be used to define a smooth action of $\mathbb{R}^3$ on $\Sigma$ exactly as it occurred in the proof of Lemma \ref{lemma:actionR2} to define an action of $\mathbb{R}^2$. Since $\hat{\fre}$ is assumed to be complete, this action is transitive. The final step of the proof consist in showing that the stabilizer of the action is of the form $\mathbb{Z}^k\times \left\{ 0\right\} \subset \mathbb{R}^k\times \mathbb{R}^{3-k}$ acting naturally on $\mathbb{R}^3$. This is explicitly proven in \cite[Chapter 4]{Conlon}.
\end{proof}


\subsection{Complete Cauchy pairs on the universal Riemannian cover}


Let $(\fre,\Theta)$ be a fully complete Cauchy pair on $\Sigma$. Proposition \ref{prop:generalformuniv} states that universal Riemannian cover of $(\Sigma,h_{\fre})$ is isometric to $\mathbb{R}^3$ when the latter is equipped with the metric:
\begin{equation}
\label{eq:metricR3}
\bar{h}_{\fre} \eqdef e^{2\mathfrak{u}} \dd x\otimes \dd x + \frh_x\, ,
\end{equation} 

\noindent
where $\frh_x$ is a flat metric on $\left\{ x\right\}\times \mathbb{R}^2 \subset \mathbb{R}^3$ for every $x\in \mathbb{R}$. The corresponding Cauchy coframe reads:
\begin{equation}
\label{eq:CauchyframeR3}
e_u = e^{\fru} \dd x \, , \qquad e_l = e_l(x) \, , \qquad e_n = e_n(x)\, ,
\end{equation}

\noindent
where $e_l$ and $e_n$ depend only on the coordinate $x$. A quick computation shows that the exterior derivative of this frame is given by:
\begin{equation*}
\dd e_u = \dd\fru \wedge e_u \, , \qquad \dd e_l =   e^{-\fru} e_u\wedge \cL_{x} e_l \, , \qquad \dd e_n =   e^{-\fru} e_u\wedge \cL_{x} e_n\, ,
\end{equation*}

\noindent
where the symbol $\cL_{x}$ denotes Lie derivative with respect to $\partial_x$. Plugging the previous equations into the Cauchy differential system \eqref{eq:exder} we obtain the following lemma.

\begin{lemma}
\label{lemma:coframeR3}
A pair $(\fre,\Theta)\in \Conf(\mathbb{R}^3)$, where $\fre$ is given by the coframe \eqref{eq:CauchyframeR3}, is a Cauchy pair if and only if the following equations are satisfied:
\begin{equation*}
(\dd \fru - \Theta(e_u))\wedge e_u = 0\, , \quad (\Theta(e_l) +  e^{-\fru} \cL_{x} e_l )\wedge e_u = 0\, , \quad (\Theta(e_n) +  e^{-\fru} \cL_{x} e_n )\wedge e_u = 0\, .
\end{equation*}
\end{lemma}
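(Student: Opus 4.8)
The plan is to substitute the three exterior-derivative formulas displayed immediately before the statement into the parallel Cauchy differential system \eqref{eq:exder} and rearrange. First I would dispose of the cohomological condition: since $\mathbb{R}^3$ is contractible we have $H^1(\mathbb{R}^3,\mathbb{R})=0$, so $[\Theta(e_u)]=0$ holds automatically and imposes nothing. It therefore suffices to reformulate the three component equations $\dd e_u=\Theta(e_u)\wedge e_u$, $\dd e_l=\Theta(e_l)\wedge e_u$ and $\dd e_n=\Theta(e_n)\wedge e_u$ for a coframe $\fre$ of the special form \eqref{eq:CauchyframeR3}.

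For the $e_u$ equation I would use $\dd e_u=\dd\fru\wedge e_u$, so that $\dd e_u=\Theta(e_u)\wedge e_u$ is equivalent to $\dd\fru\wedge e_u=\Theta(e_u)\wedge e_u$, i.e.\ to $(\dd\fru-\Theta(e_u))\wedge e_u=0$, which is the first asserted equation. The $e_l$ and $e_n$ equations are handled identically, the only subtlety being a sign. Substituting $\dd e_l=e^{-\fru}\,e_u\wedge\cL_x e_l$ into $\dd e_l=\Theta(e_l)\wedge e_u$ gives $e^{-\fru}\,e_u\wedge\cL_x e_l=\Theta(e_l)\wedge e_u$; since one-forms anticommute, $e_u\wedge\cL_x e_l=-\cL_x e_l\wedge e_u$, and moving everything to one side yields $(\Theta(e_l)+e^{-\fru}\cL_x e_l)\wedge e_u=0$. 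The verbatim computation with $e_n$ in place of $e_l$ gives the third equation. Since every step above is a chain of equivalences, both implications of the ``if and only if'' are established simultaneously.

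I do not expect a genuine obstacle here: the substantive input is the derivation of the exterior-derivative formulas for $e_u$, $e_l$, $e_n$, which rests on $e_l$ and $e_n$ depending only on $x$ (so that $\cL_x$ acts only on their coefficients) and is already recorded before the lemma. Everything else is bookkeeping, and the only places where care is needed are the anticommutation sign in the wedge product and the observation that contractibility of $\mathbb{R}^3$ renders the cohomological condition vacuous.
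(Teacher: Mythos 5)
Your substitution argument is exactly the paper's proof: the paper likewise just plugs the displayed exterior-derivative formulas for the coframe \eqref{eq:CauchyframeR3} into the system \eqref{eq:exder} and rearranges, and your sign bookkeeping ($e_u\wedge\cL_x e_l=-\cL_x e_l\wedge e_u$) is correct. The one genuine problem is your opening claim that contractibility of $\mathbb{R}^3$ makes the cohomological condition ``impose nothing.'' The condition $[\Theta(e_u)]=0\in H^1(\Sigma,\mathbb{R})$ presupposes that the one-form $\Theta(e_u)$ is \emph{closed}; what $H^1(\mathbb{R}^3,\mathbb{R})=0$ buys is only the implication ``closed $\Rightarrow$ exact.'' Closedness itself is a nontrivial differential constraint that does \emph{not} follow from the three wedge equations: differentiating $\dd e_u=\Theta(e_u)\wedge e_u$ yields only $\dd(\Theta(e_u))\wedge e_u=0$, not $\dd(\Theta(e_u))=0$. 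Concretely, the first wedge equation forces $\Theta(e_u)=\dd\fru+f_u\,e_u$ for some function $f_u\in C^{\infty}(\mathbb{R}^3)$, whence
\begin{equation*}
\dd\left(\Theta(e_u)\right)=\dd\left(f_u\,e^{\fru}\right)\wedge \dd x\, ,
\end{equation*}
which vanishes if and only if $f_u\,e^{\fru}$ depends on $x$ alone. This is precisely the condition the paper imposes \emph{separately} in the proof of Theorem \ref{thm:universalcover}, where it is what produces the arbitrary function $\mathfrak{F}(x)$ in \eqref{eq:ThetaR3}; the condition is deferred there, not rendered vacuous by contractibility.

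Consequently, the lemma---as the paper states and uses it---encodes only the exterior differential system $\dd\fre=\Theta(\fre)\wedge e_u$, and your chain of equivalences proves exactly that, by the same method as the paper. But your proposal, read literally (``both implications of the if and only if are established''), asserts that the three displayed equations are equivalent to the \emph{full} system \eqref{eq:exder}, wedge equations plus cohomological condition, on $\mathbb{R}^3$; that stronger statement is false, since the wedge equations alone allow $\Theta(e_u)$ to be non-closed. Strike the sentence claiming the cohomological condition is automatic (or replace it with the observation that on $\mathbb{R}^3$ it reduces to closedness of $\Theta(e_u)$, which is handled after the lemma in Theorem \ref{thm:universalcover}), and the rest of your argument stands as a complete proof identical to the paper's.
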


\noindent
The previous lemma is used in the following theorem to \emph{solve} the shape operator of a parallel Cauchy pair $(\fre,\Theta)$ defined on a connected and simply connected three-manifold $\Sigma$ in terms of the Cauchy coframe $\fre$.  

\begin{theorem}
\label{thm:universalcover}
A pair $(\fre,\Theta)\in \Conf(\Sigma)$ is a parallel and fully complete Cauchy pair on a connected and simply connected three-manifold $\Sigma$ if and only if there exist global coordinates $(x,y,z)$ identifying $\Sigma = \mathbb{R}^3$ such that $\fre$ satisfies:
\begin{equation}
\label{eq:mixedcondition}
\fre = (e^{\fru} \dd x , e_l(x) , e_n(x))\, , \quad (\cL_{x} e_l)(e^{\sharp}_n) =  (\cL_{x} e_n)(e^{\sharp}_l)\, ,
\end{equation}

\noindent
and in addition:
\begin{equation}
\label{eq:ThetaR3}
\Theta = (\mathfrak{F}(x)\, e^{-\fru} + \partial_x e^{-\fru} )\, e_u\otimes e_u + e_u\otimes \dd \fru + \dd\fru \otimes e_u - \frac{1}{2} e^{-\fru} \cL_x h_x\, ,
\end{equation}

\noindent
where $\mathfrak{F}\in C^{\infty}(\mathbb{R})$ is a function of $x$.  
\end{theorem}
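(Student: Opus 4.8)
The plan is to prove both implications by first reducing everything to a canonical global form on $\mathbb{R}^3$ and then matching $\Theta$ block by block in the orthonormal frame. For the forward direction I would start from a parallel and fully complete Cauchy pair $(\fre,\Theta)$ on the simply connected $\Sigma$ and invoke Proposition \ref{prop:generalformuniv}(3): since $\Sigma$ is its own universal cover, there are global Cartesian coordinates $(x,y,z)$ identifying $\Sigma=\mathbb{R}^3$, with $h_{\fre}=e^{2\fru}\dd x\otimes\dd x+\frh_x$ and each $\frh_x$ flat, and with a coframe of the form \eqref{eq:CauchyframeR3}, namely $\fre=(e^{\fru}\dd x,e_l(x),e_n(x))$. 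This already gives the first part of \eqref{eq:mixedcondition}. I would record the identities $e_u^{\sharp}=e^{-\fru}\partial_x$, the fact that $e_l,e_n$ and their duals are tangent/cotangent to the leaves $\{x\}\times\mathbb{R}^2$, and $\frh_x=e_l\otimes e_l+e_n\otimes e_n$ on each leaf, since these underlie every subsequent contraction.

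Next I would apply Lemma \ref{lemma:coframeR3} to rewrite the parallel Cauchy system as the three conditions $(\dd\fru-\Theta(e_u))\wedge e_u=0$, $(\Theta(e_l)+e^{-\fru}\cL_xe_l)\wedge e_u=0$, $(\Theta(e_n)+e^{-\fru}\cL_xe_n)\wedge e_u=0$, and then read off the components of the symmetric tensor $\Theta$. The first condition says $\Theta(e_u)-\dd\fru$ is proportional to $e_u$, hence equals $\mathfrak{F}e^{-\fru}e_u=\mathfrak{F}\dd x$ for a function $\mathfrak{F}$ on $\mathbb{R}^3$; contracting with $e_l^{\sharp},e_n^{\sharp}$ fixes the mixed components $\Theta(e_u^{\sharp},e_l^{\sharp})=\dd\fru(e_l^{\sharp})$, $\Theta(e_u^{\sharp},e_n^{\sharp})=\dd\fru(e_n^{\sharp})$, which is exactly the content of the term $e_u\otimes\dd\fru+\dd\fru\otimes e_u$. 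The second and third conditions force the leaf-tangential parts of $\Theta(e_l),\Theta(e_n)$ to be $-e^{-\fru}\cL_xe_l,-e^{-\fru}\cL_xe_n$, which I would repackage, using $\frh_x=e_l\otimes e_l+e_n\otimes e_n$ and $\cL_x\frh_x=\sum_a(\cL_xe_a)\otimes e_a+e_a\otimes(\cL_xe_a)$, as the slice block $-\tfrac12 e^{-\fru}\cL_x\frh_x$.

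The two genuinely non-formal points come next. First, the symmetry relation in \eqref{eq:mixedcondition}: the second condition gives $\Theta(e_l^{\sharp},e_n^{\sharp})=-e^{-\fru}(\cL_xe_l)(e_n^{\sharp})$ while the third gives $\Theta(e_n^{\sharp},e_l^{\sharp})=-e^{-\fru}(\cL_xe_n)(e_l^{\sharp})$, so the symmetry of the shape operator is precisely equivalent to $(\cL_xe_l)(e_n^{\sharp})=(\cL_xe_n)(e_l^{\sharp})$, completing \eqref{eq:mixedcondition}. Second, that $\mathfrak{F}$ depends only on $x$: by the cohomological condition in \eqref{eq:exder}, $\Theta(e_u)$ is exact, hence closed, and $\dd\fru$ is exact, so $\mathfrak{F}\dd x=\Theta(e_u)-\dd\fru$ is a closed one-form; since $\dd(\mathfrak{F}\dd x)=\dd\mathfrak{F}\wedge\dd x=0$ forces $\partial_y\mathfrak{F}=\partial_z\mathfrak{F}=0$, we conclude $\mathfrak{F}\in C^{\infty}(\mathbb{R})$. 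Assembling the $uu$, mixed and slice blocks, and using $\partial_xe^{-\fru}=-e^{-\fru}\partial_x\fru$ to absorb the diagonal $\dd\fru$-contribution into the coefficient $\mathfrak{F}e^{-\fru}+\partial_xe^{-\fru}$, reproduces exactly \eqref{eq:ThetaR3}.

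For the converse I would run the computation backwards: given $\fre$ of the form \eqref{eq:CauchyframeR3} satisfying the symmetry condition and $\Theta$ defined by \eqref{eq:ThetaR3} with $\mathfrak{F}\in C^{\infty}(\mathbb{R})$, a direct contraction gives $\Theta(e_u)=\dd\fru+\mathfrak{F}(x)\dd x=\dd\bigl(\fru+\int^{x}\mathfrak{F}\bigr)$, which is exact, so the cohomological condition holds; and the three conditions of Lemma \ref{lemma:coframeR3} hold because the $e_u$- and mixed components of $\Theta(e_u),\Theta(e_l),\Theta(e_n)$ die upon wedging with $e_u$, while the surviving leaf-tangential parts cancel precisely by the symmetry condition. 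I expect the main obstacle to be organizational rather than conceptual: keeping the orthonormal decomposition of the symmetric tensor $\Theta$ straight and checking that conditions two and three of Lemma \ref{lemma:coframeR3} assemble into the single tensor $-\tfrac12 e^{-\fru}\cL_x\frh_x$. The only delicate inputs are the symmetry of $\Theta$ (which manufactures the integrability-type relation in \eqref{eq:mixedcondition}) and the closedness of $\Theta(e_u)$ (which pins $\mathfrak{F}$ to a function of $x$ alone); completeness itself never enters the calculation, being used only through Proposition \ref{prop:generalformuniv} to secure the global coordinates in the forward direction.
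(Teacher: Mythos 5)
Your proof is correct and takes essentially the same approach as the paper's: you obtain the global coordinates and the form of $\fre$ from Proposition \ref{prop:generalformuniv}, pass to the component equations of Lemma \ref{lemma:coframeR3}, extract the mixed condition in \eqref{eq:mixedcondition} from the symmetry of $\Theta$, and pin $\mathfrak{F}$ to a function of $x$ alone via closedness (hence exactness) of $\Theta(e_u)$, exactly as the paper does. The only difference is presentational: you spell out the converse and the role of the symmetry condition in reassembling the leaf block $-\tfrac{1}{2}e^{-\fru}\cL_x \frh_x$ a bit more explicitly than the paper's one-line verification.
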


\begin{remark}
The second equation in \eqref{eq:mixedcondition} is non-trivial in general and hence restricts the type of coframes that can occur as part of a parallel Cauchy pair. 
\end{remark}

\begin{proof}
Let $(\fre,\Theta)$ be a Cauchy pair on a connected and simply connected three-manifold $\Sigma$. The fact that there exist global coordinates $(x,y,z)$ identifying $\Sigma$ with $\mathbb{R}^3$ respect to which $\fre$ is given by:
\begin{equation*}
\fre = (e^{\fru} \dd x , e_l(x) , e_n(x))\, ,
\end{equation*}

\noindent
follows directly from Proposition \ref{prop:generalformuniv}. On the other hand, Lemma \ref{lemma:coframeR3} implies:
\begin{equation*}
\Theta(e_u) =\dd\fru + f_u\, e_u\, , \quad \Theta(e_l) = f_l\, e_u - e^{-\fru} \cL_{x} e_l  \, , \quad \Theta(e_n) = f_n\, e_u - e^{-\fru} \cL_{x} e_n\, ,
\end{equation*} 

\noindent
for functions $f_u , f_l , f_n \in C^{\infty}(\mathbb{R}^3)$. Symmetry of $\Theta$ is equivalent to the following equations:
\begin{equation*}
f_l = \dd\fru(e^{\sharp}_l)\, , \qquad f_n = \dd\fru(e^{\sharp}_n)\, , \qquad   (\cL_{x} e_l)(e^{\sharp}_n) =  (\cL_{x} e_n)(e^{\sharp}_l
)\, .
\end{equation*}

\noindent
These conditions imply that $\Theta$ must be of the form:
\begin{equation}
\label{eq:ThetaR3proof}
\Theta = (f_u-\dd\fru(e^{\sharp}_u))\, e_u\otimes e_u + e_u\otimes \dd \fru + \dd\fru \otimes e_u - \frac{1}{2} e^{-\fru} \cL_x h_x\, ,
\end{equation}

\noindent
Furthermore, the fact that $\Theta(e_u)$ must be closed, whence exact, is equivalent to:
\begin{equation*}
\dd (f_u\,e_u) = \dd(f_u e^{\fru})\wedge \dd x = 0\, .
\end{equation*}

\noindent
Therefore, $f_u e^{\fru} = \mathfrak{F}(x)$ for a smooth function $\mathfrak{F}$ depending exclusively on the coordinate $x$. Plugging this expression back in \eqref{eq:ThetaR3proof} we obtain \eqref{eq:ThetaR3}. The converse follows by construction and can be verified explicitly by inserting \eqref{eq:ThetaR3} in the parallel Cauchy differential system \eqref{eq:exder}.
\end{proof}

\begin{remark}
\label{remark:universalcover}
Theorem \ref{thm:universalcover} recovers \cite[Theorem 4]{LeistnerLischewski} in the language of parallel Cauchy pairs and in the specific case of four Lorentzian dimensions, refining it and providing an alternative proof of the result. The refinement is contained in the \emph{extra} information provided by the Cauchy coframe $\fre$, which needs to satisfy equations \eqref{eq:mixedcondition}. On the other hand, equation \eqref{eq:ThetaR3} does not specify uniquely $\Theta$ but allows the freedom of choosing the arbitrary function $\cF(x)$. This arbitrary function seems to be absent in \cite[Theorem 4]{LeistnerLischewski}.
\end{remark}

\begin{example}
Using the notation and framework established by Theorem \ref{thm:universalcover}, assume that:
\begin{equation*}
\mathfrak{h}_x =  e^{2\frw(x)}( \dd y\otimes \dd y + \dd z\otimes \dd z)\, , 
\end{equation*}

\noindent
where $(x,y,z)$ are the Cartesian coordinates of $\mathbb{R}^3$ and $\frw(x)$ is a function on $\mathbb{R}^3$ depending only on the coordinate $x$. As defined above, $\mathfrak{h}_x$ is clearly a family of flat metrics on $\mathbb{R}^2$ parametrized by $x\in \mathbb{R}$. The corresponding parallel Cauchy coframe reads:
\begin{equation}
\fre = (e^{\fru} \dd x , e^{\frw(x)} \dd y  , e^{\frw(x)} \dd z )\, ,  
\end{equation}

\noindent
One easily checks that the second equation in \eqref{eq:mixedcondition} is automatically satisfied. On the other hand, the corresponding parallel shape operator is given by:
\begin{equation*}
\Theta = (\mathfrak{F} \, e^{-\fru} + \partial_x e^{-\fru} )\, e_u\otimes e_u + e_u\otimes \dd \fru + \dd\fru \otimes e_u - \partial_x\frw(x)\, e^{-\fru} 	\mathfrak{h}_x\, .
\end{equation*}

\noindent
Using the previous expression, we compute:
\begin{eqnarray*}
& \mathrm{Tr}_{\fre}(\Theta) = e^{-\fru} (\mathfrak{F}  + \partial_x \fru - 2 \partial_x \frw )\, , \\ & \vert\Theta\vert^2_{\fre} = e^{-2\fru} ((\mathfrak{F}  + \partial_x \fru)^2 + 2 (\partial_x \frw )^2) + 2 e^{-2\frw } ((\partial_y \fru)^2 + (\partial_z \fru)^2)\, .
\end{eqnarray*}

\noindent
In particular:
\begin{equation*}
\vert\Theta\vert^2_{\fre} - \mathrm{Tr}_{\fre}(\Theta)^2 =  2 e^{-2\fru} \partial_x \frw(x) (2 (\mathfrak{F}(x) + \partial_x \fru) - \partial_x \frw(x))+ 2 e^{-2\frw } ((\partial_y \fru)^2 + (\partial_z \fru)^2)\, ,
\end{equation*}

\noindent
and since the scalar curvature of $h_{\fre}$ is given by:
\begin{equation*}
\mathrm{R}^{\fre} = e^{-2 \fru} \left(4 \partial_x \frw \partial_x \fru-4  \partial_x^2 \frw-6 (\partial_x\frw)^2\right)-2 e^{-2 \frw} \left((\partial_y \fru)^2+(\partial_z \fru)^2+\partial_y^2 \fru+\partial_z^2 \fru\right)\, ,
\end{equation*}

\noindent
we conclude that such parallel Cauchy pair $(\fre,\Theta)$ is constrained Ricci-flat if and only if:
\begin{equation*}
2 e^{2 \frw} \left(\mathfrak{F} \partial_x \frw  +   \partial_x^2 \frw + (\partial_x\frw)^2\right) +   e^{2 \fru} \left(2(\partial_y \fru)^2+ 2(\partial_z \fru)^2+\partial_y^2 \fru+\partial_z^2 \fru\right) = 0 \, .	 
\end{equation*}

\noindent
If the second term in the previous equation only depends on $x$ and $\partial_x \frw \neq 0 $ everywhere, then we can always solve it by choosing $\mathfrak{F}$ as follows:
\begin{equation*}
\mathfrak{F} = - \frac{1}{\partial_x \frw} \left( \partial_x^2 \frw + (\partial_x\frw)^2\right) -   \frac{e^{2 (\fru - \frw)}}{2 \partial_x\frw} \left(2(\partial_y \fru)^2+ 2(\partial_z \fru)^2+\partial_y^2 \fru+\partial_z^2 \fru\right) \, .
\end{equation*}
\end{example}


\subsection{Parallel Cauchy pairs on compact three-manifolds}


In this section we consider the isometry type of Cauchy pairs on closed three-manifolds, commenting briefly on the compact case with boundary. 

\begin{proposition}
Let $\Sigma$ be an oriented closed three-manifold admitting a Cauchy pair $(\fre,\Theta)$. Then $\Sigma$ is diffeomorphic to a torus bundle over $S^1$, that is, it is diffeomorphic to the suspension $\cX_{\mathfrak{k}}$ of $T^2$ by an element $\mathfrak{k}\in\mathrm{SL}(2,\mathbb{Z})$.
\end{proposition}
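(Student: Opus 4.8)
The plan is to combine the structural results already established for complete Cauchy pairs with the classical topology of codimension-one foliations defined by closed one-forms on compact manifolds. We are given an oriented closed three-manifold $\Sigma$ carrying a Cauchy pair $(\fre,\Theta)$, and we want to show $\Sigma$ is a torus bundle over $S^1$, i.e. the suspension $\cX_{\mathfrak{k}}$ of $T^2$ by some $\mathfrak{k}\in\mathrm{SL}(2,\mathbb{Z})$.

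First I would observe that since $\Sigma$ is compact, the induced metric $h_{\fre}$ is automatically complete, so $(\fre,\Theta)$ is a complete Cauchy pair and Lemma \ref{lemma:complete} applies: the dual frame $\fre^{\sharp}$ consists of complete vector fields. By Lemma \ref{lemma:flatfol}, $\ker(e_u)$ integrates to a transversely orientable codimension-one foliation $\cF_{\fre}$ whose leaves are complete flat surfaces, hence each leaf is a plane, a cylinder, or a flat torus. The key point is that the cohomological condition $[\Theta(e_u)]=0$ in the Cauchy differential system \eqref{eq:exder} produces, via $\Theta(e_u)=-\dd\mathfrak{f}$, the nowhere-vanishing \emph{closed} one-form $\hat{e}_u := e^{\mathfrak{f}}e_u$ with $\Ker(\hat{e}_u)=\Ker(e_u)$. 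Thus $\cF_{\fre}$ is defined by a closed nowhere-vanishing one-form on a compact manifold.

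Next I would invoke Tischler's theorem: a closed manifold admitting a nowhere-vanishing closed one-form fibers over $S^1$. More precisely, I would argue that the leaves must all be \emph{compact} — on a closed three-manifold the flat leaves cannot be planes or cylinders, since a proper non-compact leaf of a codimension-one foliation on a compact manifold would have to accumulate, contradicting the flat complete structure together with the holonomy constraints; alternatively, compactness of $\Sigma$ forces the typical leaf to be a torus by the same reasoning used in the preceding propositions. Once all leaves are compact tori, the closed one-form $\hat{e}_u$ together with completeness of $\hat{e}^{\sharp}_u$ (the flow identifies leaves diffeomorphically, exactly as in the proof of Proposition \ref{prop:generalformuniv}(1)) shows that $\Sigma \to S^1$ is a locally trivial fiber bundle with fiber $T^2$, which is precisely a mapping torus, i.e. the suspension of $T^2$ by the monodromy diffeomorphism.

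Finally I would identify the monodromy class. The monodromy of a $T^2$-bundle over $S^1$ is an element of the mapping class group of the torus, which is $\mathrm{SL}(2,\mathbb{Z})$ (respecting orientation). Since $\Sigma$ is oriented and $\cF_{\fre}$ is transversely oriented, the monodromy preserves the orientation of the fiber, landing in $\mathrm{SL}(2,\mathbb{Z})$ rather than $\mathrm{GL}(2,\mathbb{Z})$; this gives the identification with the suspension $\cX_{\mathfrak{k}}$ for some $\mathfrak{k}\in\mathrm{SL}(2,\mathbb{Z})$. I expect the main obstacle to be the dichotomy of leaf types: ruling out non-compact (plane or cylinder) leaves on a \emph{closed} $\Sigma$, and showing all leaves are simultaneously compact tori. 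The cleanest route is to show that the closed one-form $\hat{e}_u$ can, after the standard small perturbation argument (Tischler), be taken to have rational periods so that $\cF_{\fre}$ becomes the fiber foliation of an honest fibration over $S^1$; then compactness of $\Sigma$ forces compact fibers, and Lemma \ref{lemma:flatfol} forces them to be flat tori. I would cite the same references (\cite{Conlon,Tondeur,Sharpe}) already used in the surrounding propositions to supply the foliation-theoretic input.
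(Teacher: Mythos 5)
Your central step fails: it is \emph{not} true that compactness of $\Sigma$ forces the leaves of $\cF_{\fre}$ to be compact tori. Take $\Sigma = T^3$ with a constant orthonormal coframe $\fre = (e_u,e_l,e_n)$ whose coefficients for $e_u$ are rationally independent, and $\Theta = 0$: then $\dd\fre = 0 = \Theta(\fre)\wedge e_u$ and $[\Theta(e_u)]=0$, so this is a Cauchy pair on a closed oriented three-manifold, yet every leaf of $\cF_{\fre}$ is a densely embedded plane. The paper's own Theorem \ref{thm:CauchyPairsClosed} lists foliations by plane leaves and by cylinder leaves as genuinely occurring cases on closed $\Sigma$; non-compact leaves of a foliation defined by a closed one-form on a compact manifold do not "accumulate" into a contradiction, they are simply dense. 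Your fallback via Tischler's theorem does not close this gap either: Tischler's argument perturbs $\hat{e}_u$ to a closed form with rational periods, and the fibers of the resulting fibration $\Sigma \to S^1$ are in general \emph{not} leaves of $\cF_{\fre}$ (in the $T^3$ example above they cannot be, since the leaves are dense), so Lemma \ref{lemma:flatfol} gives you no control on those fibers and you cannot conclude that they are flat tori. A priori the Tischler fiber is just some closed orientable surface.

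For comparison, the paper's proof sidesteps the leaf dichotomy entirely: by Lemma \ref{lemma:actionR2} the commuting complete vector fields $e^{\sharp}_l, e^{\sharp}_n$ generate a locally free action of $\mathbb{R}^2$ on $\Sigma$, and the classification of Rosenberg--Roussarie--Weil \cite{RosenbergRW} states that a closed orientable three-manifold admits such an action if and only if it is a locally trivial torus bundle over $S^1$, hence a suspension $\cX_{\mathfrak{k}}$ with $\mathfrak{k}\in\Sl(2,\mathbb{Z})$. If you want to salvage your Tischler route, the missing ingredient is an Euler-class argument in place of Lemma \ref{lemma:flatfol}: the tangent planes to the Tischler fibers form a plane field homotopic to $\Ker(e_u)$ (the perturbation of $\hat{e}_u$ can be taken $C^0$-small, and a convex interpolation of nowhere-vanishing one-forms gives the homotopy of kernels), and $\Ker(e_u)$ is a trivial plane bundle because it is globally framed by $(e^{\sharp}_l,e^{\sharp}_n)$; hence for a fiber $F$ one gets $\chi(F) = \langle e(\Ker(e_u))\vert_F , [F]\rangle = 0$, forcing $F \cong T^2$. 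With that correction your argument becomes a valid alternative proof, and your concluding identification of the monodromy in $\Sl(2,\mathbb{Z})$ (rather than $\mathrm{GL}(2,\mathbb{Z})$) via the orientations is fine.
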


\begin{proof}
Let $(\fre,\Theta)$ be a Cauchy pair on $\Sigma$. By Lemma \ref{lemma:actionR2} $\Sigma$ admits locally free action of $\mathbb{R}^2$. Reference \cite{RosenbergRW} proves that $\Sigma$ admits such an action if and only if $\Sigma$ is diffeomorphic to a locally trivial torus bundle over $S^1$, which can always be constructed as a suspension of $T^2$ by an element $\mathfrak{k}\in\mathrm{SL}(2,\mathbb{Z})$ acting linearly on $T^2$.
\end{proof}

\noindent
Since it will be of importance in the following, we briefly recall the suspension construction of a torus bundle over $S^1$, which depends on a choice of orientation preserving diffeomorphism of $T^2$ modulo homotopy equivalence. Since $\Diff(T^2)$ is homotopy equivalent to $\Sl(2,\mathbb{Z})$ acting \emph{linearly} on $T^2$, it is enough to consider elements in $\Sl(2,\mathbb{Z})$. Let $\mathfrak{k}\in \mathrm{SL}(2,\mathbb{Z})$ and denote by $\langle \mathfrak{k}\rangle\subset \Sl(2,\mathbb{Z})$ the cyclic group generated by the element $\mathfrak{k}$. There exists a natural properly discontinuous fixed point free action of $\langle \mathfrak{k}\rangle$ on $\mathbb{R}\times T^2$ given by:
\begin{equation}
\label{eq:Sl2Zaction}
\mathfrak{k}\cdot (z,v) = (z+1, \mathfrak{k}(v))\, , \qquad (z,v)\in \mathbb{R}\times T^2\, ,
\end{equation}

\noindent
where $\mathfrak{k}$ acts linearly on $\mathbb{R}^2/\mathbb{Z}^2$. The suspension of $\mathbb{R}\times T^2$ by $\mathfrak{k} \in \Sl(2,\mathbb{Z})$ is by definition the quotient:
\begin{equation*}
\cX_{\mathfrak{k}} = \frac{\mathbb{R}\times T^2}{\langle \mathfrak{k}\rangle}\, , 
\end{equation*}

\noindent
equipped with the projection:
\begin{equation*}
\pi \colon \cX_{\mathfrak{k}} \to S^1 = \mathbb{R}/\mathbb{Z}\, , \qquad [z,v]\mapsto [z]\, .
\end{equation*}

\noindent
Equivalently, $\cX_{\mathfrak{k}}$ can be constructed by gluing $\left\{ 0 \right\}\times T^2$ and $\left\{ 1 \right\}\times T^2$ in $[0,1]\times T^2$ through the diffeomorphism $\mathfrak{k}\colon T^2\to T^2$. The element $\mathfrak{k}\in \Sl(2,\mathbb{R})$ determines completely the topology of $\cX_{\mathfrak{k}}$ and in particular determines if a given foliation of $\cX_{\mathfrak{k}}$ admits a bundle-like metric. Note that, given a Cauchy pair $(\fre,\Theta)$ on $\Sigma = \cX_{\mathfrak{k}}$,  the leaves of the foliation $\cF_{\fre}\subset \cX_{\mathfrak{k}}$ will not coincide in general with the fibers of $\cX_{\mathfrak{k}}$. We summarize now two important methods for constructing foliations in $\cX_{\mathfrak{k}}$.  

\begin{itemize}[leftmargin=*]
\item \emph{Linear plane foliations on $T^3$}. Denote by $\Diff(S^1)$ the group of orientation preserving diffeomorphisms of $S^1$ and consider the three-manifold  $\mathbb{R}^2\times S^1$. Fix a representation:
\begin{equation*}
\rho = (\rho_a,\rho_b) \colon \pi_1(T^2) = \mathbb{Z} \oplus \mathbb{Z} \to \Diff(S^1)\, ,
\end{equation*}

\noindent
such that the rotational numbers $r_a\in S^1$ and $r_b\in S^1$ of $\rho_a(1)$ and $\rho_b(1)$ are both irrational and rationally independent. Then, $\rho_a(1)$ and $\rho_b(1)$ generate a subgroup of the orientation preserving diffeomorphism group $\Diff(S^1)$, which we denote by:
\begin{equation*}
\langle \rho_a(1) , \rho_b(1)\rangle \subset \Diff(S^1)\, .
\end{equation*}

\noindent 
There is a canonical fixed point free action of $\langle \rho_a(1) , \rho_b(1)\rangle$ on $\mathbb{R}^2\times S^1$ given by:
\begin{equation*}
\rho_a(1)\cdot (x_1,x_2,\theta) = (x_1 + 1,x_2, \rho_a(1)(\theta))\, , \qquad \rho_b(1)\cdot (x_1,x_2,\theta) = (x_1,x_2+1, \rho_b(1)(\theta))\, ,
\end{equation*}

\noindent
on the generators $\rho_a(1)$ and $\rho_b(1)$. The quotient:
\begin{equation*}
\cX_{\rho} := \mathbb{R}^2\times S^1/\langle \rho_a(1) , \rho_b(1)\rangle\, ,
\end{equation*}

\noindent
of $\mathbb{R}^2\times S^1$ by the previous action is diffeomorphic to $T^3$ and the plane foliation of $\mathbb{R}^2\times S^1$ whose leaves are embedded planes $\mathbb{R}^2\times \left\{\theta\right\}\subset \mathbb{R}^2\times S^1$, $\theta\in S^1$, descends to a foliation by planes of  $\mathbb{R}^2\times S^1/\langle \rho_a(1) , \rho_b(1)\rangle$, which is called the \emph{suspension foliation} defined by $\rho$ and it is denoted by:
\begin{equation*}
\cF_\rho\subset \cX_{\rho}  = \mathbb{R}^2\times S^1/\langle \rho_a(1) , \rho_b(1)\rangle\, .
\end{equation*}

\noindent
In particular, $\cX_{\rho}$ admits the structure of a $S^1$ bundle over $T^2$ transverse to $\cF_\rho$, which is obtained by the standard associated bundle construction. Note that $\rho_a(1)$ and $\rho_b(1)$ may not be rotations of $S^1$ by a constant angle. In general, the foliation $\cF_\rho$ is only $C^0$ isomorphic to a foliation for which $\rho_a(1)$ and $\rho_b(1)$ are rotations, see \cite{Herman} for a explicit counterexample. However, if $\cF_\rho$ is defined by a non-singular closed one-form then $\cF_\rho$ is at least $C^1$ isomorphic to a foliation for which $\rho_a(1)$ and $\rho_b(1)$ are rotations \cite{Herman}. 

\item \emph{Cylinder foliations of circle bundles.} Consider the foliation $\cF_0\subset T^2\times\mathbb{R}$ whose leaves are defined to be the embedded submanifolds $\left\{\theta_1\right\} \times S^1\times \mathbb{R} \subset T^2\times \mathbb{R} = S^1\times S^1\times\mathbb{R}$ for $\theta_1 \in S^1$. For every diffeomorphism $\mathfrak{f}\colon T^2\to T^2$ preserving the foliation by standard circles $\left\{\theta_1\right\}\times S^1 \subset T^2 = S^1\times S^1 $ and such that its restriction to the first circle factor $\frf \vert_{S^1 \times \{ \theta_2\}}: S^1 \to S^1$ has an irrational rotation number, we define a diffeomorphism of $T^2\times \mathbb{R}$ as follows:
\begin{equation}
\label{eq:frfdefinitioncyl}
T^2\times \mathbb{R} \to T^2\times \mathbb{R}\, , \qquad (\theta_1 , \theta_2, x) \mapsto (\mathfrak{f}(\theta_1 , \theta_2), x+1)\, .
\end{equation}

\noindent
By \cite[Theorem 2]{ChateletRW} and \cite[Page 254 Th\'eor\`eme 1]{Hector}  $\mathfrak{f}\in \Sl(2,\mathbb{Z})$ is conjugate to an element of the form:
\begin{equation}
\label{eq:frfsl2z}
\begin{pmatrix}
	1 & n \\
	0 & 1 
\end{pmatrix} \in \Sl(2,\mathbb{Z})\, ,
\end{equation}

\noindent
where $n\in \mathbb{Z}$ is an integer. Denote by $\langle \mathfrak{f}\rangle \subset \Diff(T^2\times \mathbb{R})$ the cyclic subgroup of $\Diff(T^2\times \mathbb{R})$ generated by the previous action, and define:
\begin{equation*}
\cX_{\mathfrak{f}} := \frac{\mathbb{R}\times T^2}{\langle\mathfrak{f}\rangle}\, ,
\end{equation*}

\noindent
to be the quotient of $\mathbb{R}\times T^2$ by $\langle \mathfrak{f}\rangle $, which defines a fiber bundle $\pi_{\mathfrak{f}}\colon \cX_{\mathfrak{f}}\to S^1$ with projection:
\begin{equation*}
\pi_{\mathfrak{f}}([\theta_1 , \theta_2, x]) = [x] \in S^1\, .
\end{equation*}

\noindent
We see that the action of $\langle \mathfrak{f}\rangle$ preserves by construction $\cF_0$, whence $\cF_0$ descends to a foliation $\cF_{\mathfrak{f}} \subset \cX_{\mathfrak{f}}$ whose fibers are all diffeomorphic to the cylinder. More explicitly, the leaves of the foliation are given by:
\begin{equation*}
\mathrm{p}_{\mathfrak{f}}(\left\{\theta \right\}\times S^1\times \mathbb{R})\subset \cX_{\mathfrak{f}}\, , \quad \theta \in S^1\, ,
\end{equation*}
 
\noindent
where $\mathrm{p}_{\mathfrak{f}} \colon T^2\times \mathbb{R} \to \cX_{\mathfrak{f}}$ denotes the canonical projection.
\end{itemize}

\begin{proposition}
Every codimension-one foliation of $\cX_{\mathfrak{k}}$ defined by the kernel of a nowhere vanishing closed one-form whose leaves are all diffeomorphic to either the plane $\mathbb{R}^2$ or the cylinder $\mathbb{R}^2\backslash\left\{ 0\right\}$ is isomorphic to one of the foliations defined above. 
\end{proposition}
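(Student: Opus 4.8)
\emph{Plan.} The plan is to encode the foliation through the period homomorphism of its defining closed one-form and to read off the global structure from that homomorphism together with the torus-bundle presentation of $\cX_{\mathfrak{k}}$. Write $\omega\in\Omega^1(\cX_{\mathfrak{k}})$ for the nowhere vanishing closed one-form with $\cF=\ker\omega$, let $p\colon\mathbb{R}^3\to\cX_{\mathfrak{k}}$ be the universal cover and $\Gamma=\pi_1(\cX_{\mathfrak{k}})=\mathbb{Z}\ltimes_{\mathfrak{k}}\mathbb{Z}^2$ the deck group. Since $\mathbb{R}^3$ is simply connected, $p^{\ast}\omega=\dd f$ for a submersion $f\colon\mathbb{R}^3\to\mathbb{R}$ whose connected level sets are the leaves of the lifted foliation, each a copy of $\mathbb{R}^2$. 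As every $\gamma\in\Gamma$ preserves $p^{\ast}\omega$, one gets $f\circ\gamma=f+c(\gamma)$ for a homomorphism $c\colon\Gamma\to\mathbb{R}$, the period homomorphism. The holonomy of a foliation defined by a closed one-form is by translations \cite{Conlon,Tondeur}, so each leaf is $\mathbb{R}^2/\ker c$; hence a leaf is a plane exactly when $\ker c=0$ and a cylinder exactly when $\ker c\cong\mathbb{Z}$, while $\ker c\cong\mathbb{Z}^2$ would give a compact torus leaf, excluded by hypothesis.

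First I would dispose of the compact regime. The image $c(\Gamma)\subset\mathbb{R}$ is finitely generated, hence free abelian of some rank $r$. If $r=0$ then $\omega$ is exact, impossible for a nowhere vanishing one-form on the closed manifold $\cX_{\mathfrak{k}}$; if $r=1$ then $[\omega]$ is a real multiple of an integral class, so $\omega$ defines a fibration $\cX_{\mathfrak{k}}\to S^1$ whose fibres are the (compact) leaves, contradicting that the leaves are planes or cylinders. Thus $r\ge 2$, so $b_1(\cX_{\mathfrak{k}})\ge 2$. From the mapping-torus presentation one has $H_1(\cX_{\mathfrak{k}};\mathbb{Q})=\mathbb{Q}\oplus\operatorname{coker}(\mathfrak{k}-\Id)_{\mathbb{Q}}$, so $b_1\ge 2$ forces $1$ to be an eigenvalue of $\mathfrak{k}$; as $\det\mathfrak{k}=1$, this means $\mathfrak{k}$ is either the identity or is conjugate in $\Sl(2,\mathbb{Z})$ to a nontrivial unipotent $\left(\begin{smallmatrix}1&n\\0&1\end{smallmatrix}\right)$ with $n\neq 0$.

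Next I would treat the plane case $\ker c=0$. Then $c$ embeds $\Gamma$ into the abelian group $\mathbb{R}$, so $\Gamma$ is abelian; since $\mathbb{Z}\ltimes_{\mathfrak{k}}\mathbb{Z}^2$ is abelian only when $\mathfrak{k}=\Id$, we obtain $\cX_{\mathfrak{k}}=T^3$ and the three periods are rationally independent. As each coordinate period is nonzero, one coordinate circle gives a fibration $T^3\to T^2$ whose fibres are everywhere transverse to $\cF$, exhibiting $\cF$ as the suspension of a holonomy representation $\rho\colon\pi_1(T^2)=\mathbb{Z}^2\to\Diff(S^1)$ whose two generators have rotation numbers equal to the corresponding ratios of periods, hence irrational and rationally independent. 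Because $\cF$ is defined by a closed one-form, Herman's theorem \cite{Herman} conjugates $\rho$ by a $C^1$ diffeomorphism to a representation by rotations, so $\cF$ is isomorphic to a linear plane foliation $\cF_\rho$. In the cylinder case $\ker c\cong\mathbb{Z}$ is normal with $\Gamma/\ker c$ abelian, so $[\Gamma,\Gamma]=(\mathfrak{k}-\Id)\mathbb{Z}^2\subseteq\ker c$ has rank at most one, i.e. $\rk(\mathfrak{k}-\Id)\le 1$, again placing $\mathfrak{k}$ in the identity-or-unipotent case. Quotienting $\mathbb{R}^3$ first by the central $\mathbb{Z}=\ker c$, which acts along the compact core of the cylinders, presents $\cX_{\mathfrak{k}}$ as the mapping torus of a diffeomorphism $\mathfrak{f}$ of $T^2$ preserving the circle foliation transverse to $\cF$ and with irrational first rotation number; then \cite[Theorem 2]{ChateletRW} together with \cite{Hector} conjugates $\mathfrak{f}$ to the normal form $\left(\begin{smallmatrix}1&n\\0&1\end{smallmatrix}\right)$ and identifies $\cF$ with the cylinder foliation $\cF_{\mathfrak{f}}$, the value $n=0$ recovering $T^3$.

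The hard part will be the cylinder case: converting the algebraic data ($\ker c\cong\mathbb{Z}$ normal and $\Gamma/\ker c$ abelian of rank two) into the concrete mapping-torus-of-$T^2$ presentation with its transverse circle foliation, and then matching the descended foliation with $\cF_{\mathfrak{f}}$ on the nose, including the $\Sl(2,\mathbb{Z})$ normalization. In the plane case the only nontrivial ingredient is analytic and is supplied by Herman's conjugacy theorem, the transversality of the fibres and the identification of rotation numbers with ratios of periods being routine.
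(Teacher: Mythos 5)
Your overall route---lift to the universal cover, encode $\cF$ in the period homomorphism $c\colon\Gamma\to\mathbb{R}$ of the defining closed one-form, and read off the leaf type from $\ker c$---is a genuine elaboration of what the paper does: the paper's own proof is a two-line citation, attributing the cylinder case to \cite{Hector} and the plane case to \cite{Herman}. Your algebraic preprocessing is correct and adds value: rank $0$ or $1$ of $c(\Gamma)$ is excluded exactly as you say, $\ker c=0$ forces $\Gamma$ abelian and hence $\mathfrak{k}=\Id$ (recovering algebraically, for this class of manifolds, the Rosenberg-type fact that plane foliations force $T^3$), and $\ker c\cong\mathbb{Z}$ forces $\rk(\mathfrak{k}-\Id)\le 1$, i.e.\ $\mathfrak{k}$ unipotent. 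Since, however, you too ultimately invoke \cite{Herman} and \cite{ChateletRW}, \cite{Hector} for the actual conjugacies, the load-bearing ingredients coincide with the paper's.

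The genuine gap is in the step you declare routine in the plane case: ``as each coordinate period is nonzero, one coordinate circle gives a fibration $T^3\to T^2$ whose fibres are everywhere transverse to $\cF$.'' This does not follow. Transversality of the $x_1$-circles to $\ker\omega$ requires $\omega(\partial_{x_1})\neq 0$ at every point, whereas a nonzero period only controls the integral of $\omega(\partial_{x_1})$ over the circle. For example, $\omega=\dd x_1+\alpha\,\dd x_2+\beta\,\dd x_3+\dd g$ with $g=g(x_1)$ satisfying $g'<-1$ on some interval is closed, nowhere vanishing (because of the $\dd x_2$ component), has all three periods unchanged, and yet the $x_1$-circles are tangent to $\cF$ wherever $1+g'=0$. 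Producing \emph{some} fibration transverse to $\cF$, i.e.\ exhibiting $\cF$ as a suspension at all, is essentially the content of the theorem you are trying to prove, and it is precisely what the direct citation of \cite{Herman} supplies; your detour through the suspension presentation therefore assumes the hard part. Two further soft spots of the same kind: the connectedness of the level sets of $f$ on $\mathbb{R}^3$ (needed so that every leaf is $\mathbb{R}^2/\ker c$ with the \emph{same} stabilizer $\ker c$) is a theorem about foliations without holonomy, not an observation, and should be cited (e.g.\ from \cite{Conlon} or \cite{Hector}); and in the cylinder case you explicitly leave the matching with $\cF_{\mathfrak{f}}$ to \cite{ChateletRW} and \cite{Hector}---acceptable, since that is exactly what the paper does, but it means your argument there is scaffolding around the citation rather than a proof replacing it.
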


\begin{remark}
By \emph{isomorphic foliations} we mean foliations for which there exists a $C^1$ diffeomorphism between their total spaces of the foliations mapping leaves to leaves diffeomorphically. 
\end{remark}

\begin{proof}
The result is proven in \cite{Hector} for the case of cylinder leaves and in \cite{Herman} for the case of plane leaves.
\end{proof}

\begin{theorem}
\label{thm:CauchyPairsClosed}
Let $(\fre,\Theta)$ be a Cauchy pair on an oriented closed three-manifold $\Sigma$ with associated foliation $\cF_{\fre}\subset \Sigma$ and Riemannian metric $h_{\fre}$. Then, one and only one of the following cases occur:

\begin{enumerate}[leftmargin=*]
	\item $\cF_{\fre}\subset \Sigma$ is a foliation by plane leaves and there exists an isometry:
	\begin{equation*}
	(\Sigma,h_{\fre}) =(\mathbb{R}^2\times S^1, \dd x_1\otimes \dd x_1 +\dd x_2\otimes \dd x_2 +  e^{2\fru}\dd \theta\otimes \dd \theta )/\langle \rho_a(1) , \rho_b(1)\rangle\, ,
	\end{equation*}
	
	\noindent
	where $ \rho_a(1) , \rho_b(1) \in \Diff(S^1)$ are rotations of rationally independent constant irrational angle, respectively, and $\mathfrak{u}\in C^{\infty}(\mathbb{R}^2)$ is a function depending only on $x_1$ and $x_2$. In particular, $\Sigma$ is diffeomorphic to $T^3$ and $\cF_{\fre}$ is isomorphic to the foliation $\cF_{\rho}$ described above.
	
	\item $\cF_{\fre}\subset \Sigma$ is a foliation by cylinder leaves and there exists an isometry:
	\begin{equation*}
	(\Sigma,h_{\fre}) =(\mathbb{R}^2\times S^1, \dd x_1\otimes \dd x_1 +\dd x_2\otimes \dd x_2 +  e^{2\fru}\dd \theta\otimes \dd \theta )/\langle \frf\rangle\, ,
	\end{equation*}
	
	\noindent
	where $ \mathfrak{f} \in \Diff(T^2\times \mathbb{R})$ is as prescribed in \eqref{eq:frfdefinitioncyl} and \eqref{eq:frfsl2z} and $\mathfrak{u}\in C^{\infty}(\mathbb{R}^2)$ is a function depending only on $x_1$ and $x_2$.  In particular, $\cF_{\fre}$ is isomorphic to the foliation $\cF_{\frf}$ described above.
	
	\item $\cF_{\fre}\subset \Sigma$ is a foliation by torus leaves and $(\Sigma,h_{\fre})$ is a conformal Riemannian submersion over $S^1$ with flat fibers and whose conformal factor is determined, modulo constant multiplicative factors, by:
	\begin{equation*}
	\Theta(e_u) = -\dd \frf\, .
	\end{equation*}
	
	\noindent
	 In particular, $\Sigma$ is diffeomorphic to a torus suspension by an element $\mathfrak{t}\in \Sl(2,\mathbb{Z})$. 
\end{enumerate}
\end{theorem}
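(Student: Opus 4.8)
The plan is to treat the diffeomorphism type, the foliation type, and the isometry type as three separate layers, and to reduce the whole statement to the structural results already established together with the foliation classifications recalled above. First I would observe that since $\Sigma$ is closed, the metric $h_{\fre}$ is automatically complete and every vector field on $\Sigma$ is complete; hence the Cauchy pair is automatically \emph{fully complete}, and all the results of the previous subsection apply without extra hypotheses. By Lemma \ref{lemma:flatfol} the leaves of $\cF_{\fre}$ are complete flat surfaces, so each is isometric to the plane, the cylinder, or a flat torus, and by item (1) of Proposition \ref{prop:generalformuniv} all leaves share the same model. This produces the three mutually exclusive cases according to the diffeomorphism type of the typical leaf. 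The final clauses on the diffeomorphism type of $\Sigma$ follow uniformly: Lemma \ref{lemma:actionR2} endows $\Sigma$ with a locally free action of $\mathbb{R}^2$, whence by \cite{RosenbergRW} $\Sigma$ is a torus suspension $\cX_{\mathfrak{k}}$ for some $\mathfrak{k}\in\Sl(2,\mathbb{Z})$.

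For the foliation type in cases (1) and (2), I would use that $\cF_{\fre}$ is cut out by the nowhere vanishing closed one-form $\hat{e}_u=e^{\mathfrak{f}}e_u$, where $\Theta(e_u)=-\dd\mathfrak{f}$. The preceding Proposition, invoking \cite{Herman} for plane leaves and \cite{Hector} for cylinder leaves, then identifies $\cF_{\fre}$ with the model suspension foliation $\cF_{\rho}$, respectively $\cF_{\mathfrak{f}}$; in the plane case this simultaneously identifies $\Sigma$ with $T^3$. This already delivers the qualitative assertions about $\cF_{\fre}$.

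The delicate point, and the main obstacle, is to upgrade these diffeomorphism-level statements to the explicit \emph{isometries} asserted in (1) and (2). I would transport $h_{\fre}$ through the foliation isomorphism so that the leaves become the horizontal planes or cylinders $\mathbb{R}^2\times\{\theta\}\subset\mathbb{R}^2\times S^1$, and then pin down the metric as follows. On each leaf the pair $(e_l,e_n)$ restricts to a parallel, hence closed, orthonormal coframe, so the leaf-wise flows of the commuting fields $e^{\sharp}_l,e^{\sharp}_n$ from Lemma \ref{lemma:actionR2} provide coordinates $(x_1,x_2)$ in which the induced leaf metric is exactly $\dd x_1\otimes\dd x_1+\dd x_2\otimes\dd x_2$. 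The modified metric $h_{\hat{\fre}}$ is bundle-like for $\cF_{\fre}$, so its transverse component is holonomy invariant; combined with closedness of $\hat{e}_u$ this forces the transverse one-dimensional metric to be basic and the transverse holonomy to act by isometries of $(S^1,e^{2\fru}\dd\theta\otimes\dd\theta)$. Invoking once more the rigidity results of \cite{Herman,Hector}, the holonomy generators $\rho_a(1),\rho_b(1)$, respectively $\mathfrak{f}$, may be conjugated to constant-angle rotations, which is precisely what makes them isometries of the split metric, and the same conjugacy shows that $\fru$ descends to a function of $(x_1,x_2)$ only. Assembling these pieces yields the claimed isometries. I expect the reconciliation of the merely $C^1$ foliation conjugacy of \cite{Herman,Hector} with a genuine smooth isometry, while simultaneously keeping the leaf metric standard and the conformal factor basic, to be the technically hardest step, and the place where compactness of $\Sigma$ is essential.

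Finally, in case (3) the leaves are compact, so $\cF_{\fre}$ is the fiber foliation of a locally trivial torus bundle $\pi\colon\Sigma\to S^1$. Here $\hat{e}_u$ has discrete periods and is, up to a constant, the pullback $\pi^{\ast}\dd\theta$ of the base coordinate; thus $e_u=e^{-\mathfrak{f}}\pi^{\ast}\dd\theta$ and the horizontal part of $h_{\fre}$ equals $e^{-2\mathfrak{f}}\,\pi^{\ast}(\dd\theta\otimes\dd\theta)$, exhibiting $\pi$ as a conformal Riemannian submersion whose conformal factor is fixed, modulo a multiplicative constant, by $\Theta(e_u)=-\dd\mathfrak{f}$. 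The fibers carry the flat metric $e_l\otimes e_l+e_n\otimes e_n$ by Lemma \ref{lemma:flatfol}, and $\Sigma$ is a torus suspension by the general diffeomorphism statement, which completes the trichotomy.
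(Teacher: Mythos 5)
Your proposal tracks the paper's architecture faithfully on everything except the step where the theorem's real content lies, and there it has a genuine gap. The trichotomy from the leaf type, full completeness on a closed manifold, the diffeomorphism type via the $\mathbb{R}^2$-action and \cite{RosenbergRW}, the identification of $\cF_{\fre}$ with $\cF_{\rho}$ or $\cF_{\frf}$ via \cite{Herman,Hector}, and case (3) via \cite[Corollary 8.6]{Sharpe} together with $\dd(e^{\frf}e_u)=0$ all agree with the paper. The gap is the isometry claim in cases (1) and (2). Three concrete problems. First, you explicitly defer what you call the technically hardest step, the reconciliation of the merely $C^1$ conjugacy of \cite{Herman,Hector} with a smooth isometry; the paper never transports $h_{\fre}$ through that conjugacy at all. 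Instead it lifts $h_{\fre}$ to the smooth covering $S^1\times\mathbb{R}^2$ (resp.\ $T^2\times\mathbb{R}$), writes the lift as $e^{2\fru}\dd\theta\otimes\dd\theta+h_{\theta}$ with $h_{\theta}$ the flat leaf metrics, and works with invariance under the smooth deck transformations. Second, your bundle-like argument is aimed at the wrong tensor: what is holonomy invariant is the transverse part $\hat{e}_u\otimes\hat{e}_u$ of $h_{\hat{\fre}}$ (this is automatic), whereas the transverse part of $h_{\fre}$ appearing in the statement is $e_u\otimes e_u=e^{-2\frf}\,\hat{e}_u\otimes\hat{e}_u$, which is \emph{not} holonomy invariant; so this step gives no information about $\fru$. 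Third, the assertion that conjugating the holonomy generators to rotations ``shows that $\fru$ descends to a function of $(x_1,x_2)$ only'' is a non sequitur: that conjugacy concerns the transverse structure of the foliation and says nothing about how the conformal factor of $h_{\fre}$ varies transversally. In the paper this is exactly the point where a separate argument appears: deck invariance of the lifted metric yields $\fru\circ\rho_o(1)=\fru$ and $h_{\theta\circ\rho_o(1)}=h_{\theta}$, and then density of the orbits of the irrational rotations in $S^1$ is invoked to force $h_{\theta}$ and $\fru$ to be constant along $S^1$. No argument of this kind occurs anywhere in your proposal.

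That this missing step is substantive, and not bookkeeping, can be seen as follows: invariance under the coupled deck action is strictly weaker than transverse constancy. On $\mathbb{R}^2\times S^1$ with deck maps $(x_1,x_2,\theta)\mapsto(x_1+1,x_2,\theta+\alpha)$ and $(x_1,x_2,\theta)\mapsto(x_1,x_2+1,\theta+\beta)$, any function of $s=\theta-\alpha x_1-\beta x_2 \bmod 1$ is invariant yet genuinely $\theta$-dependent, and such functions do arise from data satisfying every condition you use: for $F$ smooth, $1$-periodic and $1,\alpha,\beta$ rationally independent, the coframe $\fre=(e^{F(s)}\dd\theta,\dd x_1,\dd x_2)$ with $\Theta$ determined by $\Theta(e_u)=\dd\left(F(s)\right)$, $\Theta(e_l)=-\alpha F'\,e_u$, $\Theta(e_n)=-\beta F'\,e_u$ is symmetric, satisfies \eqref{eq:exder} (the cohomological condition holds because $F(s)$ is a globally defined function on the quotient), descends to the suspension three-torus, and its foliation has dense plane leaves --- yet its conformal factor $\fru=F(s)$ is not basic. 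Your chain ``bundle-like $\Rightarrow$ transverse metric basic; conjugacy to rotations $\Rightarrow$ $\fru$ basic'' would declare this configuration impossible, so at least one link in it is broken. Any correct completion of the argument must engage directly with this phenomenon, i.e.\ it must explain what additional mechanism (beyond invariance and the foliation conjugacy) pins down the transverse behaviour of $h_{\theta}$ and $\fru$; your proposal does not contain such a mechanism, and this is precisely where it falls short of the paper's proof.
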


\begin{proof}
We prove the statement point by point.

\begin{enumerate}[leftmargin=*]
	\item Let $(\fre,\Theta)$ be a parallel Cauchy pair with associated foliation $\cF_{\fre}\subset \Sigma$ by planes. Then, and as explained above, $\Sigma$ is diffeomorphic to $T^3$ (any compact connected 3-manifold with a foliation by planes is diffeomorphic to $T^3$), $(\Sigma,h_{\fre})$ is covered by $S^1\times\mathbb{R}^2$ and $\cF_{\fre}$ lifts to the plane foliation of $S^1\times\mathbb{R}^2$ whose leaves are embedded planes $\left\{\theta\right\}\times\mathbb{R}^2\subset S^1\times \mathbb{R}^2$, $\theta\in S^1$. Hence, the lift of $h_{\fre}$ to $S^1\times\mathbb{R}^2$ reads:
	\begin{equation*}
	(S^1\times \mathbb{R}^2 , \hat{h}_{\fre} = e^{2\fru}\dd \theta \otimes \dd \theta +  h_{\theta})\, ,
	\end{equation*} 
	
	\noindent
	where $\fru$ is a function on $S^1\times \mathbb{R}^2$, $\theta$ is an angular coordinate on $S^1$ and $h_{\theta}$ is a family of flat metrics on $\mathbb{R}^2$ parametrized by $\theta\in S^1$. Consequently, $(\Sigma,h_{\fre})$ has the following isometry type:
	\begin{equation*}
	(\Sigma,h_{\fre}) =(S^1\times \mathbb{R}^2, e^{2\fru}\dd \theta \otimes \dd \theta +  h_{\theta} )/\langle \rho_a(1) , \rho_b(1)\rangle\, ,
    \end{equation*}
	
	\noindent
	For the metric $e^{2\fru}\dd \theta \otimes \dd \theta +  h_{\theta}$ to descend to $\Sigma$ through the previous quotient we must have:
	\begin{equation*}
	\rho_a(1)^{\ast}( e^{2\fru}\dd \theta\otimes \dd \theta +  h_{\theta}) =  e^{2\fru}\dd \theta\otimes \dd \theta +  h_{\theta}\, , \, \rho_b(1)^{\ast}( e^{2\fru}\dd \theta \otimes \dd \theta +  h_{\theta}) =  e^{2\fru}\dd \theta \otimes \dd \theta +  h_{\theta}\, ,
	\end{equation*}
	
	\noindent
	which immediately implies:
	\begin{equation*}
	\fru \circ  \rho_o(1)= \fru\, ,  \quad h_{\theta\circ \rho_o(1)}=h_{\theta}\, ,
	\end{equation*}
	
	\noindent
	for $o = a,b$. Since $\langle \rho_a(1) , \rho_b(1)\rangle$ generates a dense subgroup (recall that the action of any diffeomorphism $\chi:S^1 \rightarrow S^1$ with constant irrational rotation number has dense orbits) of $S^1$ this implies in turn that $h_{\theta}$ and $\mathfrak{u}$ are constant along $S^1$.
	
	\item Let $(\fre,\Theta)$ be a parallel Cauchy pair with associated foliation $\cF_{\fre}\subset \Sigma$ by cylinder leaves. Then, and as explained above,  $(\Sigma,h_{\fre})$ is covered by $T^2\times \mathbb{R}$ and $\cF_{\fre}$ lifts to the cylinder foliation of $T^2\times\mathbb{R}$ whose leaves are the embedded cylinders $\mathbb{R}\times S^1\times \left\{\theta\right\}\subset T^2\times \mathbb{R}$, $\theta\in S^1$. Hence, the lift of $h_{\fre}$ to $S^1\times\mathbb{R}^2$ is given by:
	\begin{equation*}
		( S^1\times S^1\times \mathbb{R} , \hat{h}_{\fre} = e^{2\fru}\dd \theta_1 \otimes \dd \theta_1 +  h_{\theta_1})\, ,
	\end{equation*} 
	
	\noindent
	where $\fru$ is a function on $S^1\times S^1 \times\mathbb{R}$, $(\theta_1 , \theta_2)$ are angular coordinates on $S^1\times S^1$ and $h_{\theta_1}$ is a family of flat metrics on $S^1\times\mathbb{R}$ parametrized by $\theta_1\in S^1$. Then:
	\begin{equation*}
		(\Sigma,h_{\fre}) =(S^1\times S^1\times \mathbb{R}, e^{2\fru}\dd \theta_1 \otimes \dd \theta_1 +  h_{\theta_1} )/\langle \frf\rangle\, ,
	\end{equation*}
	
	\noindent
	For the metric $e^{2\fru}\dd \theta \otimes \dd \theta +  h_{\theta_1}$ to descend to $\Sigma$ the group we must have:
	\begin{equation*}
		\frf^{\ast}( e^{2\fru}\dd \theta_1 \otimes \dd \theta_1 +  h_{\theta_1}) =  e^{2\fru}\dd \theta_1 \otimes \dd \theta_1  +  h_{\theta_1}\, ,  
	\end{equation*}
	
	\noindent
	which, since the rotation number of $\frf$ is irrational, immediately implies, as in the previous case, that neither $h_{\theta_1}$ nor $\mathfrak{u}$ depend on $\theta_1$.

	\item Let $(\fre,\Theta)$ be a parallel Cauchy pair with associated foliation $\cF_{\fre}\subset \Sigma$ by torus leaves. Since $\cF_{\fre}$ has trivial holonomy and $\Sigma$ is connected and compact,\cite[Corollary 8.6]{Sharpe} implies that $\cF_{\fre}$ arises as the fibers of a fibration $\pi\colon \Sigma \to S^1$ and:
	\begin{equation*}
		T\Sigma = H\oplus V\, ,
	\end{equation*}
	
	\noindent
	where $V := \ker(\dd\pi)$ and $H$ is spanned by $e^{\sharp}_u$. In particular, the vertical bundle $V$ is spanned by $e^{\sharp}_l$ and $e^{\sharp}_n$, so the fibers of $\pi$ are flat and we obtain a conformal submersion over $S^1$. The fact that the conformal factor $e^{\mathfrak{f}}$ is as described in the statement follows from the first equation of the parallel Cauchy differential system, namely:
	\begin{equation*}
	\dd e_u  =  \dd \mathfrak{f}\wedge e_u\, ,
	\end{equation*} 
	
	\noindent
	which implies $\dd (e^{\mathfrak{f}}e_u) = 0$. Hence $e^{\mathfrak{f}}e_u$ is locally the exterior derivative of a coordinate $\hat{x}$ and the horizontal metric is locally $\dd \hat{x}\otimes \dd \hat{x}$. 
\end{enumerate}
\end{proof}


\section{Left-invariant parallel Cauchy pairs on Lie groups}
\label{sec:leftinvariant}


In this section we investigate left-invariant parallel Cauchy pairs on connected and simply connected three-dimensional Lie groups. In order to do this, we will exploit the classification of connected and simply connected three-dimensional Riemannian Lie groups developed in \cite{Milnor}, together with the fact that every left-invariant Cauchy pair $(\fre,\Theta)$ defines a left-invariant metric $h_{\fre}$.

Let $(\fre,\Theta) \in \Conf(\Sigma)$ be a left-invariant Cauchy pair on a three-dimensional connected and simply connected Lie group $\Sigma =\G$, that is, $\fre$ is a left-invariant coframe and $\Theta$ is a left-invariant shape operator on $\G$. Write:
\begin{equation*}
\Theta = \sum_{a,b}\Theta_{ab}\, e_a\otimes e_b\, , \qquad \Theta_{ab}\in \mathbb{R}\, , \qquad a, b = u,l,n\, .
\end{equation*}

\noindent
in terms of the left-invariant Cauchy coframe $\fre = (e_u , e_l , e_n)$. Using the previous expression for $\Theta$, the Cauchy differential system \eqref{eq:exder} evaluated on $(\fre,\Theta)$ is equivalent to:
\begin{equation}
\label{eq:leftinvcauchydifferential}
\dd e_u = (\Theta_{ul} e_l + \Theta_{un} e_n)\wedge e_u\, , \quad \dd e_l = (\Theta_{ll} e_l + \Theta_{ln} e_n)\wedge e_u\, , \quad \dd e_n =(\Theta_{nl} e_l + \Theta_{nn} e_n)\wedge e_u\, .
\end{equation}

\noindent
Taking the exterior derivative of the previous equations, we obtain the corresponding \emph{integrability conditions}:
\begin{equation}
\label{eq:integrabilityleft}
\tll\tun-\tln\tul=0\, , \qquad \tln\tun-\tnn\tul=0\, .
\end{equation}

\noindent
For further reference, we define the following quantities:
\begin{equation*}
T :=\tll+\tnn\, , \qquad \Delta := \tll\tnn-\tln^2\, ,
\end{equation*}

\noindent
which respectively correspond to the trace and determinant of $\Theta$ restricted to the distribution defined by the kernel of $e_u$.

\begin{proposition}
\label{prop:closedtheta}
A left invariant Cauchy pair $(\fre,\Theta)$ satisfies the cohomological condition $ [\Theta(e_u)]=0$ if and only if:
\begin{equation*}
(\tul^2+\tun^2) \mathrm{Tr}_{\fre}(\Theta)=0\, .
\end{equation*}
\end{proposition}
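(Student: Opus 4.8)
The plan is to first turn the cohomological condition into a closedness condition, and then reduce that closedness to pure linear algebra in the constants $\Theta_{ab}$. Since $\Sigma=\G$ is connected and simply connected we have $H^1(\Sigma,\RR)=0$, so the requirement $[\Theta(e_u)]=0$ is equivalent to $\Theta(e_u)$ being exact, which on a simply connected manifold holds precisely when $\Theta(e_u)$ is closed. Because $\fre$ is $h_{\fre}$-orthonormal, contracting $\Theta=\sum_{a,b}\Theta_{ab}\,e_a\otimes e_b$ with $e_u^{\sharp}$ gives
\begin{equation*}
\Theta(e_u)=\tuu\,e_u+\tul\,e_l+\tun\,e_n\, ,
\end{equation*}
so the whole problem reduces to computing $\dd\Theta(e_u)$.

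Next I would compute $\dd\Theta(e_u)=\tuu\,\dd e_u+\tul\,\dd e_l+\tun\,\dd e_n$ by substituting the left-invariant Cauchy differential system \eqref{eq:leftinvcauchydifferential}. Collecting the coefficients of the linearly independent two-forms $e_l\wedge e_u$ and $e_n\wedge e_u$, the closedness condition $\dd\Theta(e_u)=0$ becomes the pair of scalar equations
\begin{equation*}
\tul(\tuu+\tll)+\tun\tln=0\, ,\qquad \tun(\tuu+\tnn)+\tul\tln=0\, .
\end{equation*}
Throughout I may freely use the integrability conditions \eqref{eq:integrabilityleft}, namely $\tll\tun=\tln\tul$ and $\tln\tun=\tnn\tul$, which hold automatically for any left-invariant Cauchy pair since they follow from $\dd^2=0$; multiplying the first by $\tun$ and the second by $\tul$ yields the key identity $2\tul\tun\tln=\tll\tun^2+\tnn\tul^2$.

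For the forward implication I would assume the two closedness equations and take the combination $\tul\cdot(\text{first})+\tun\cdot(\text{second})$; substituting the key identity absorbs the mixed term $2\tul\tun\tln$ and the expression collapses exactly to $(\tul^2+\tun^2)\,\mathrm{Tr}_{\fre}(\Theta)=0$. For the converse I would split on the two ways this product can vanish: if $\tul=\tun=0$ both closedness equations hold trivially, while if $\mathrm{Tr}_{\fre}(\Theta)=0$ I would substitute $\tuu=-(\tll+\tnn)$ into each equation, turning them respectively into $-\tul\tnn+\tun\tln$ and $-\tun\tll+\tul\tln$, each of which vanishes by one of the integrability conditions. The only genuinely delicate point is the first step, justifying that on the simply connected group the cohomological condition is precisely the closedness $\dd\Theta(e_u)=0$; once that reduction and the integrability identity are in place, the remaining algebra is short and direct.
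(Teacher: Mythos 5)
Your proposal is correct and follows essentially the same route as the paper: reduce the cohomological condition to closedness of $\Theta(e_u)$ via simple connectedness, expand $\dd\Theta(e_u)=0$ into the same two scalar equations, and close the argument with the integrability conditions \eqref{eq:integrabilityleft}. The only difference is cosmetic: the paper substitutes $\tun\tln=\tnn\tul$ and $\tul\tln=\tll\tun$ directly into each equation, turning them individually into $\tul\,\mathrm{Tr}_{\fre}(\Theta)=0$ and $\tun\,\mathrm{Tr}_{\fre}(\Theta)=0$, whereas you reach the same conclusion through a linear combination plus a case split, which is equally valid.
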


\begin{proof}
Since $\Sigma$ is by assumption simply-connected we have $H^1(\Sigma)=0$ and it suffices to prove that $\Theta(e_u)$ is closed. We impose:
\begin{equation*}
\dd \Theta(e_u)=\tuu \dd e_u+ \tul \dd e_l+ \tun \dd e_n=0\, .
\end{equation*}

\noindent
Using the parallel Cauchy differential system \ref{eq:leftinvcauchydifferential}, the previous condition is equivalent to the following equations:
\begin{equation*}
\tuu\tul+\tul\tll+\tun \tln=0\, , \quad \tuu\tun+\tul \tln+\tun \tnn=0\,,
\end{equation*}

\noindent
which, upon the use of the integrability condition \eqref{eq:integrabilityleft} of $(\fre,\Theta)$, are in turn equivalent to:
\begin{equation*}
\tul \mathrm{Tr}_{\fre}(\Theta) = 0\, , \qquad \tun \mathrm{Tr}_{\fre}(\Theta) = 0\, .
\end{equation*}

\noindent
These equations are satisfied if and only if $\tul=\tun=0$ or $\mathrm{Tr}_{\fre}(\Theta)=0$ (or both) hold. 
\end{proof}

\noindent
We consider now the case in which $\G$ is unimodular.

\begin{lemma}
Let $(\fre,\Theta)\in \Sol(\G)$ be a parallel Cauchy pair. Then, the simply connected three-dimensional group $\G$ is unimodular if and only if:
\begin{equation}
\label{eq:unimodularitycond}
T =\tll+\tnn=0\, , \qquad \tun=\tul=0\, .
\end{equation}
\end{lemma}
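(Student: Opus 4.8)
The plan is to reduce the statement to the standard algebraic characterization of unimodularity. Recall that a connected Lie group $\G$ is unimodular if and only if its Lie algebra $\frg = \Lie(\G)$ is unimodular, that is, $\tr(\ad_X) = 0$ for every $X \in \frg$. Since the Cauchy coframe $\fre = (e_u, e_l, e_n)$ is left-invariant, the structure equations \eqref{eq:leftinvcauchydifferential} encode precisely the structure constants of $\frg$ in the left-invariant frame $\fre^{\sharp} = (e^{\sharp}_u, e^{\sharp}_l, e^{\sharp}_n)$ dual to $\fre$. Hence the whole statement reduces to reading off these constants from \eqref{eq:leftinvcauchydifferential} and computing three traces.

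Rather than manipulate the structure constants directly, I would phrase unimodularity through the left-invariant volume form $\nu \eqdef e_u \wedge e_l \wedge e_n$, using that $\frg$ is unimodular if and only if $\cL_{e^{\sharp}_a}\nu = 0$ for each $a \in \{u,l,n\}$. Since $\nu$ is a top-degree form on the three-manifold $\G$ we have $\dd\nu = 0$, so Cartan's formula $\cL_{e^{\sharp}_a}\nu = \dd\,\iota_{e^{\sharp}_a}\nu + \iota_{e^{\sharp}_a}\dd\nu$ collapses to $\cL_{e^{\sharp}_a}\nu = \dd\,\iota_{e^{\sharp}_a}\nu$. The three interior products are immediate:
\begin{equation*}
\iota_{e^{\sharp}_u}\nu = e_l \wedge e_n\, , \qquad \iota_{e^{\sharp}_l}\nu = -\,e_u \wedge e_n\, , \qquad \iota_{e^{\sharp}_n}\nu = e_u \wedge e_l\, .
\end{equation*}

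The main step is then the substitution of \eqref{eq:leftinvcauchydifferential} into $\dd\,\iota_{e^{\sharp}_a}\nu$, followed by collecting the coefficient of $\nu$. This is a short computation whose only subtlety is the bookkeeping of signs in the wedge products: using $\dd e_u = (\tul e_l + \tun e_n)\wedge e_u$ together with the analogous expressions for $\dd e_l$ and $\dd e_n$, and discarding every term containing a repeated factor, I expect to obtain
\begin{equation*}
\dd\,\iota_{e^{\sharp}_u}\nu = -(\tll + \tnn)\,\nu\, , \qquad \dd\,\iota_{e^{\sharp}_l}\nu = \tul\,\nu\, , \qquad \dd\,\iota_{e^{\sharp}_n}\nu = \tun\,\nu\, .
\end{equation*}
The simultaneous vanishing of these three quantities is exactly the system $T = \tll + \tnn = 0$, $\tun = \tul = 0$ of \eqref{eq:unimodularitycond}, which establishes both implications at once.

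I do not anticipate a genuine obstacle: the content is the correct tracking of orientations and signs, and the only conceptual input is the volume-form characterization of unimodularity. I would remark, finally, that the Jacobi identity for $\frg$ (equivalently $\dd^2 = 0$) holds automatically because $\G$ is a bona fide Lie group, so the integrability conditions \eqref{eq:integrabilityleft} play no role in this particular equivalence.
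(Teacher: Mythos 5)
Your proposal is correct, and your three displayed identities do come out as claimed: substituting \eqref{eq:leftinvcauchydifferential} into Cartan's formula gives $\dd\,\iota_{e^{\sharp}_u}\nu = -(\tll+\tnn)\,\nu$, $\dd\,\iota_{e^{\sharp}_l}\nu = \tul\,\nu$ and $\dd\,\iota_{e^{\sharp}_n}\nu = \tun\,\nu$, so unimodularity is equivalent to \eqref{eq:unimodularitycond}. Your route differs from the paper's only in packaging. The paper invokes the same algebraic criterion --- $\G$ is unimodular if and only if $\tr(\ad_X)=0$ for every $X$ in its Lie algebra --- but implements it by evaluating the traces directly against the dual frame: unimodularity is written as the vanishing of $\dd e_l(e^{\sharp}_u,e^{\sharp}_l)+\dd e_n(e^{\sharp}_u,e^{\sharp}_n)$, of $\dd e_u(e^{\sharp}_l,e^{\sharp}_u)+\dd e_n(e^{\sharp}_l,e^{\sharp}_n)$ and of $\dd e_u(e^{\sharp}_n,e^{\sharp}_u)+\dd e_l(e^{\sharp}_n,e^{\sharp}_l)$, into which \eqref{eq:leftinvcauchydifferential} is substituted. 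Your volume-form criterion is equivalent to this via the identity $\cL_X\nu = -\tr(\ad_X)\,\nu$, valid for left-invariant $X$ and left-invariant $\nu$, so the conceptual input is the same; what your version buys is that all sign bookkeeping is absorbed into top-degree wedge algebra, which is arguably harder to get wrong than evaluating two-forms on pairs of dual vectors, at the cost of having to quote the volume-form characterization of unimodularity rather than the trace condition itself. Your closing observation is also consistent with the paper: its proof of this lemma never uses the integrability conditions \eqref{eq:integrabilityleft} either.
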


\begin{proof}
A Lie group $\G$ is unimodular if and only if the adjoint map of the associated Lie algebra has vanishing trace. Since the parallel Cauchy coframe $\fre = (e_u,e_l,e_n)$ is left-invariant, unimodularity of $\G$ is equivalent to:
\begin{equation*}
\dd e_l(e_u^\sharp,e_l^\sharp)+\dd e_n(e_u^\sharp,e_n^\sharp)=0\, , \quad \dd e_u(e_l^\sharp,e_u^\sharp)+\dd e_n(e_l^\sharp,e_n^\sharp)=0\, , \quad \dd e_u(e_n^\sharp,e_u^\sharp)+\dd e_l(e_n^\sharp,e_l^\sharp)=0\, ,
\end{equation*}
	
\noindent
which in turn is equivalent to:
\begin{equation*}
\tll+\tnn=0\, , \qquad \tul=0\, , \qquad \tun=0\, ,
\end{equation*}
	
\noindent
upon the use of the parallel Cauchy differential system \eqref{eq:leftinvcauchydifferential}.
\end{proof}

\begin{proposition}
\label{prop:cauchyuni}
Let $(\fre,\Theta)$ be a left invariant Cauchy pair on an unimodular Lie group $\G$. Then, one and only one of the following holds:
\begin{itemize}[leftmargin=*]
\item $\Delta=0$ and $(\G,h_{\fre})$ is isometric to the additive abelian Lie group $\mathbb{R}^3$ equipped with its standard invariant flat Riemannian metric.
\item $\Delta \neq 0$ and $\Sigma$ is isometric to the group $\mathrm{E}(1,1)$ of rigid motions of two-dimensional Minkowski space equipped with a left-invariant Riemannian metric. 
\end{itemize}
\end{proposition}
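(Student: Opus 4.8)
The plan is to start from the preceding lemma, which tells us that left-invariant unimodularity forces $\tun=\tul=0$ and $\tnn=-\tll$, and to feed these constraints back into the left-invariant Cauchy system \eqref{eq:leftinvcauchydifferential} in order to read off the Lie algebra of $\G$ explicitly. With these relations the first equation of \eqref{eq:leftinvcauchydifferential} becomes $\dd e_u=0$, while the remaining two reduce to
\begin{equation*}
\dd e_l = (\tll\, e_l + \tln\, e_n)\wedge e_u\, , \qquad \dd e_n = (\tln\, e_l - \tll\, e_n)\wedge e_u\, .
\end{equation*}
In particular $\Delta = \tll\tnn - \tln^2 = -(\tll^2+\tln^2)\le 0$, so that $\Delta=0$ if and only if $\tll=\tln=0$, i.e. exactly when the right-hand sides above vanish. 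This inequality is the crux of the argument: it is what collapses Milnor's six unimodular model geometries down to the two appearing in the statement.

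Next I would dualize these structure equations to obtain the bracket relations of the dual frame $\fre^\sharp=(e^\sharp_u,e^\sharp_l,e^\sharp_n)$. Using the identity $\dd\alpha(X,Y)=-\alpha([X,Y])$ for left-invariant one-forms $\alpha$ and left-invariant vector fields $X,Y$, one finds that $\mathrm{Span}(e^\sharp_l,e^\sharp_n)$ is an abelian ideal on which $e^\sharp_u$ acts by
\begin{equation*}
A = \begin{pmatrix} \tll & \tln \\ \tln & -\tll \end{pmatrix}\, , \qquad \det A = \Delta\, , \qquad \tr A = 0\, ,
\end{equation*}
while $[e^\sharp_l,e^\sharp_n]=0$; closedness of $e_u$ guarantees that no bracket acquires a component along $e^\sharp_u$, in agreement with unimodularity. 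Hence the Lie algebra of $\G$ is the semidirect product $\mathbb{R}\,e^\sharp_u\ltimes_A\mathbb{R}^2$, determined entirely by the symmetric traceless matrix $A$.

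The two cases are then immediate. If $\Delta=0$ then $A=0$ and the coframe $\fre$ is closed; being a closed orthonormal coframe, its Levi-Civita connection forms vanish, so $h_{\fre}$ is flat, while the vanishing brackets make $\G$ abelian. As $\G$ is connected and simply connected and every left-invariant metric is complete, $(\G,h_{\fre})$ is globally isometric to $\mathbb{R}^3$ with its standard flat metric. If $\Delta\neq 0$ then, because $A$ is symmetric and traceless, $\Delta=\det A<0$, so $A$ has real, distinct, nonzero eigenvalues $\pm\sqrt{-\Delta}$; diagonalizing $A$ and rescaling $e^\sharp_u$ identifies the Lie algebra with $\fre(1,1)=\mathbb{R}\ltimes\mathbb{R}^2$, the Lie algebra of the group of rigid motions of the Minkowski plane. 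The connected, simply connected group with this algebra is $\mathrm{E}(1,1)$, so $\G\cong\mathrm{E}(1,1)$ as Lie groups and $h_{\fre}$ transports to a left-invariant metric on $\mathrm{E}(1,1)$, yielding the desired isometry.

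The step I expect to require the most care is the dualization and the identification of $A$: one must keep the orientation and the sign convention in $\dd\alpha(X,Y)=-\alpha([X,Y])$ straight so that the action matrix comes out symmetric and traceless with $\det A=\Delta$. Once this is in place the sign inequality $\Delta\le 0$ does all the remaining work, ruling out $\mathrm{SU}(2)$, $\widetilde{\mathrm{SL}}(2,\mathbb{R})$, the Heisenberg group and $\widetilde{\mathrm{E}}(2)$, and leaving exactly the flat abelian case and the $\mathrm{E}(1,1)$ case; alternatively, this final identification may be phrased by appealing directly to Milnor's classification \cite{Milnor}.
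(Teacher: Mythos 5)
Your proof is correct, and it reaches the conclusion by a route that differs from the paper's in execution, though it rests on the same two pillars: the unimodularity lemma ($\tul=\tun=0$, $\tnn=-\tll$) and the resulting sign observation $\Delta=-(\tll^2+\tln^2)\le 0$. Where you diverge is in how the group is identified. The paper stays at the level of the coframe: in the case $\Delta\neq 0$ it splits into the sub-cases $\tll=0$ and $\tll\neq 0$, and in the latter performs an explicit trigonometric rotation of $(e_l,e_n)$ (together with a rescaling of $e_u$) to bring the structure equations into the normal form $\dd e_1=e_2\wedge e_3$, $\dd e_2=e_1\wedge e_3$, $\dd e_3=0$, which is then matched against Milnor's classification of unimodular Riemannian Lie groups. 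You instead dualize once and for all to the bracket relations, package the whole structure into the semidirect product $\mathbb{R}\,e^\sharp_u\ltimes_A\mathbb{R}^2$ with $A$ symmetric and traceless, $\det A=\Delta$, and identify the Lie algebra by diagonalizing $A$ (eigenvalues $\pm\sqrt{-\Delta}$ when $\Delta<0$). This is essentially the paper's rotation in disguise -- the orthogonal diagonalization of $A$ is exactly the change of coframe by the angle $\beta$ -- but your packaging removes the case split on $\tll$, makes the appeal to Milnor optional rather than essential, and makes transparent why precisely $\mathbb{R}^3$ and $\mathrm{E}(1,1)$ survive while $\mathrm{SU}(2)$, $\widetilde{\mathrm{SL}}(2,\mathbb{R})$, the Heisenberg group and $\widetilde{\mathrm{E}}(2)$ are excluded. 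Your handling of the flat case (closed orthonormal coframe $\Rightarrow$ vanishing connection forms $\Rightarrow$ flat, plus completeness and simple connectedness to get the global isometry with Euclidean $\mathbb{R}^3$) is also slightly more explicit than the paper's, which only records $\dd\fre=0$ and the group isomorphism.
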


\begin{proof}
We distinguish between the cases $\Delta=0$ and $\Delta \neq 0$. 

\begin{itemize}[leftmargin=*]
\item $\Delta=0$. Since $\tll\tnn=\tln^2$ and we have $\tll+\tnn=0$ by unimodularity, we obtain that $\tll=\tnn=\tln=0$. Also, again by unimodularity, $\tul=\tun=0$, so we conclude that $\dd \fre = 0$ and $\Sigma$ is isomorphic to the abelian Lie group $\mathbb{R}^3$.

\item $\Delta \neq 0$. By unimodularity, see equation \eqref{eq:unimodularitycond}, we have $\tll=-\tnn$ and hence $\Delta < 0$. The exterior derivative of the Cauchy coframe $\fre$ can be then written as follows:
\begin{equation*}
\dd e_u=0\, , \quad  \dd e_l=(\tll e_l +\tln e_n) \wedge e_u \, , \quad \dd e_n=(\tln e_l -\tll e_n) \wedge e_u\,.
\end{equation*}

\noindent
If $\tll = 0$ the previous equations reduce to:
\begin{equation*}
\dd e_u=0\, , \quad  \dd e_l= \tln e_n \wedge e_u \, , \quad \dd e_n=\tln e_l \wedge e_u\, .
\end{equation*}

\noindent
Since $\Delta < 0$, we have $\tln \neq 0$ and after rescaling $e_u$ by $\tln$ we obtain:
\begin{equation*}
\dd e^{\prime}_u=0\, , \quad  \dd e_l=  e_n \wedge e^{\prime}_u \, , \quad \dd e_n=  e_l \wedge e^{\prime}_u\, .
\end{equation*}

\noindent
Comparing with the classification of unimodular Riemannian Lie groups \cite{Milnor}, see also Appendix A of \cite{Freibert} for a concise summary, existence of such left-invariant coframe implies that $\G$ is isomorphic to the Lie group $\mathrm{E}(1,1)$. If $\tll \neq 0$ we consider following change of coframes:
\begin{equation*}
\begin{pmatrix}
e_1 \\ e_2 \\e_3
\end{pmatrix}= \begin{pmatrix}
\cos \beta & -\sin \beta & 0 \\
\sin \beta & \cos \beta & 0 \\ 
0 & 0 & \sqrt{\vert \Delta \vert} \\
\end{pmatrix}\begin{pmatrix}
e_l \\ e_n \\ e_u \\
\end{pmatrix}\, ,
\end{equation*}

\noindent
where:
\begin{equation*}
\sin \beta=\frac{\sqrt{2}}{2} \sqrt{1-\frac{\tln}{\sqrt{\vert \Delta\vert}}}\, , \quad \cos \beta=\frac{\sqrt{2}}{2}\frac{\tll}{\sqrt{\vert\Delta\vert}} \frac{1}{\sqrt{1-\frac{\tln}{\sqrt{\vert\Delta\vert}}}}\, .
\end{equation*}

\noindent
The exterior derivative of the transformed coframe $(e_1,e_2,e_3)$ reads:
\begin{equation*}
\dd e_1=e_2 \wedge e_3 \, , \quad \dd e_2=e_1\wedge e_3\, , \quad \dd e_3=0\,.
\end{equation*}

\noindent
By the classification of unimodular Riemannian Lie groups \cite{Milnor}, existence of such left-invariant coframe implies that $\G$ is again isomorphic to the Lie group $\mathrm{E}(1,1)$, and hence we conclude. 
\end{itemize}
\end{proof}

\noindent
We consider now the case in which $\G$ is non-unimodular.

\begin{proposition}
\label{prop:Cauchynonuni}
Let $(\fre,\Theta)$ be a left invariant Cauchy pair on a non-unimodular Lie group $\G$. Then, one and only one of the following holds:
\begin{itemize}[leftmargin=*]
\item $\Delta=0$ and $(\G , h_{\fre})$ is isometric to the Lie group $\tau_2 \oplus  \mathbb{R}$ equipped with a left-invariant Riemannian metric.
\item $\Delta \neq 0$ and $(\G , h_{\fre})$ is isometric to $\tau_{3,\mu}$ equipped with a left-invariant Riemannian metric, where $\mu$ is given by one of the following possibilities:
\begin{enumerate}
\item If $\Theta(e_l,e_n) \neq 0$, by:
\begin{equation*}
\mu = \frac{T-\text{\emph{sign}}(T)\sqrt{T^2-4\Delta}}{T+\text{\emph{sign}}(T)\sqrt{T^2-4\Delta}}\, .
\end{equation*}
\item If $\Theta(e_l,e_n)=0$ and $\vert \Theta(e_l,e_l) \vert \geq \vert \Theta(e_n,e_n) \vert $, by:
\begin{equation*}
\mu=\frac{\Theta(e_n,e_n)}{\Theta(e_l,e_l)}\, .
\end{equation*}
\item If $\Theta(e_l,e_n)=0$ and $\vert \Theta(e_n,e_n) \vert \geq \vert \Theta(e_l,e_l) \vert $, by: 
\begin{equation*}
\mu=\frac{\Theta(e_l,e_l)}{\Theta(e_n,e_n)}\, .
\end{equation*}
\end{enumerate}

\noindent
Recall that the possible values of $\mu$ satisfy $-1<\mu \leq 1$, $\mu \neq 0$.
\end{itemize}
\end{proposition}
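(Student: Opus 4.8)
The plan is to reduce the entire analysis to the symmetric $2\times 2$ matrix
\[
M = \begin{pmatrix} \tll & \tln \\ \tln & \tnn \end{pmatrix},
\]
recording the restriction of $\Theta$ to $\ker(e_u)$, whose trace and determinant are precisely $T$ and $\Delta$. The first key remark is that the integrability conditions \eqref{eq:integrabilityleft} are nothing but the single statement $M\,(\tun,-\tul)^{\mathsf{t}} = 0$; hence the dichotomy $\Delta \neq 0$ versus $\Delta = 0$ of the statement coincides with the dichotomy between $M$ invertible (which immediately forces $\tul = \tun = 0$) and $M$ singular. Throughout I will use that a constant $\SO(2)$ rotation of the pair $(e_l,e_n)$ preserves left-invariance and the metric $h_{\fre}$, and acts on the structure equations \eqref{eq:leftinvcauchydifferential} by conjugating $M$; choosing the rotation whose columns are orthonormal eigenvectors of the symmetric $M$ diagonalizes the relevant block and brings \eqref{eq:leftinvcauchydifferential} to a canonical form which I will match against Milnor's classification \cite{Milnor} and Example \ref{ep:tau3mu}.

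In the case $\Delta \neq 0$, integrability forces $\tul = \tun = 0$, and then the unimodularity criterion \eqref{eq:unimodularitycond} together with $\Delta \neq 0$ forces $T \neq 0$, so that $\dd e_u = 0$ while $\dd e_l, \dd e_n$ are governed by $M$. Diagonalizing $M$ by a rotation yields the real eigenvalues $\lambda_{\pm} = \tfrac{1}{2}\big(T \pm \sqrt{T^2 - 4\Delta}\big)$, both nonzero, and structure equations of the form $\dd \tilde e_l = \lambda_1\,\tilde e_l \wedge e_u$, $\dd \tilde e_n = \lambda_2\,\tilde e_n \wedge e_u$. Rescaling $e_u$ by the eigenvalue of larger absolute value brings these to the coframe of Example \ref{ep:tau3mu}, identifying $\G$ with $\tau_{3,\mu}$ for $\mu$ the ratio of the subdominant to the dominant eigenvalue. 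A short computation with $\lambda_{\pm}$ then shows that this ratio equals the displayed $\mathrm{sign}(T)$ expression when $\tln \neq 0$; when $\tln = 0$ the matrix is already diagonal with eigenvalues $\tll, \tnn$, no rotation is needed, and $\mu$ is read off directly as in cases (2) and (3) according to which entry dominates in modulus. It remains to check $-1 < \mu \leq 1$ and $\mu \neq 0$: the bound $|\mu| \leq 1$ is automatic, $\mu = -1$ is impossible since it forces $T = 0$, and $\mu \neq 0$ is equivalent to $\Delta \neq 0$.

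In the case $\Delta = 0$ I split on whether $(\tul,\tun)$ vanishes. If $\tul = \tun = 0$, then \eqref{eq:unimodularitycond} forces $T \neq 0$, so $M$ has eigenvalues $0$ and $T$; diagonalizing leaves one leg of the coframe closed and the other, say $\tilde e_l$, obeying $\dd \tilde e_l = T\,\tilde e_l \wedge e_u$ with $\dd e_u = 0$, which is precisely the coframe of $\tau_2 \oplus \mathbb{R}$. If instead $(\tul,\tun) \neq 0$, the cohomological condition of Proposition \ref{prop:closedtheta} forces $\mathrm{Tr}_{\fre}(\Theta) = 0$; rotating $(e_l,e_n)$ to align $(\tul,\tun)$ along $e_l$ sets $\tun = 0$, and since $(\tun,-\tul)^{\mathsf{t}}$ spans $\ker(M)$ this simultaneously forces $\tln = \tnn = 0$. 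The structure equations then collapse to $\dd e_u = \tul\,e_l \wedge e_u$, $\dd e_l = \tll\,e_l \wedge e_u$, $\dd e_n = 0$; the span of $e_u, e_l$ closes, carries the closed one-form $\tll\,e_u - \tul\,e_l$ while $\dd e_u \neq 0$, hence is the non-abelian two-dimensional algebra, and $e_n$ supplies the $\mathbb{R}$ summand, so again $\G \cong \tau_2 \oplus \mathbb{R}$ with its induced left-invariant metric $h_{\fre}$.

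The step I expect to be the main obstacle is the $\Delta = 0$ sub-case with $(\tul,\tun) \neq 0$, where the cohomological condition and the kernel description of \eqref{eq:integrabilityleft} must be used in tandem to kill $\tln$ and $\tnn$ and to recognize the residual two-dimensional block as $\tau_2$ rather than as the abelian algebra. A secondary, purely computational, difficulty is verifying that the normalized eigenvalue ratio reproduces the exact $\mathrm{sign}(T)$ formula and lands in the admissible range; this rests on checking that $|\lambda_+| \geq |\lambda_-|$ precisely when $T > 0$, with the mirror statement for $T < 0$.
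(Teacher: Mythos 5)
Your proof is correct. It follows the paper's overall strategy---the split on $\Delta$, the use of the integrability conditions \eqref{eq:integrabilityleft}, of the cohomological criterion of Proposition \ref{prop:closedtheta} and of the unimodularity criterion \eqref{eq:unimodularitycond}, and the final identification of the Lie algebra from its structure equations via Milnor's classification---but your linear-algebraic packaging is genuinely different in two places, and cleaner. First, you read the integrability conditions as the single kernel statement $M(\tun,-\tul)^{\mathsf{t}}=0$, which makes $\Delta\neq 0\Rightarrow\tul=\tun=0$ immediate and, when $\Delta=0$ and $(\tul,\tun)\neq 0$, kills $\tln$ and $\tnn$ in one stroke after rotating $(\tul,\tun)$ onto $e_l$; the paper instead solves the conditions componentwise (writing $\tun=\tfrac{\tln}{\tll}\tul$ when $\tll\neq 0$) and works through four ad hoc sub-cases with separate, non-orthogonal frame changes. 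Second, you diagonalize $M$ by a constant $\SO(2)$ rotation, which preserves left-invariance, preserves $h_{\fre}$, and---a point worth stating explicitly in a final write-up---sends parallel Cauchy pairs to parallel Cauchy pairs, since $\dd \tilde{e}_i=\Theta(\tilde{e}_i)\wedge e_u$ is stable under constant linear combinations of $e_l,e_n$ and $\Theta(e_u)$ is untouched; the paper instead uses eigenvector combinations such as $e_1=e_l+\tfrac{\lambda-\tll}{\tln}e_n$, which obscure the metric but output the $\mathrm{sign}(T)$ formula directly. In your route that formula needs the small extra check, which you correctly flag and which does hold, that the eigenvalue of larger modulus is $\tfrac{1}{2}\left(T+\mathrm{sign}(T)\sqrt{T^2-4\Delta}\right)$: indeed $\lambda_+^2-\lambda_-^2=T\sqrt{T^2-4\Delta}$, so $|\lambda_+|\geq|\lambda_-|$ precisely when $T\geq 0$. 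The trade-off is that the paper's heavier sub-case bookkeeping is what records the explicit admissible forms of $\Theta$ that feed into the table of Theorem \ref{thm:allcauchygroups}, data your streamlined argument would have to reconstruct separately.
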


\begin{proof}
We distinguish between the cases $\Delta=0$ and $\Delta \neq 0$. 
\begin{itemize}[leftmargin=*]
\item $\Delta=0$. Assume first that $T=\tll+\tnn=0$. Conditions $T=0$ and $\Delta=0$ can hold simultaneously if and only if $\tll=\tnn=\tln=0$. Hence:
\begin{equation*}
\dd e_u=\tul e_l \wedge e_u+ \tun e_n \wedge e_u \, , \quad \dd e_l = 0\, , \quad  \dd e_n=0\, , \quad \Theta_{uu} = 0\, ,
\end{equation*}

\noindent
where the last equation is equivalent to the one-form $\Theta(e_u)$ being exact. Since the coefficients $\tul$ and $\tun$ cannot simultaneously vanish (otherwise $\G$ would be unimodular) defining $e_1=e_u$, $e_2=\tun e_l -\tul e_n $, $e_3=\tul e_l+\tun e_n$ we conclude that $\G$ is isomorphic to $\tau_2 \oplus \mathbb{R}$. 

\noindent
If $T \neq 0$, then either $\tll \neq 0$ or $\tnn \neq 0$ or both are non-vanishing. Assume $\tll \neq 0$ (completely analogue results hold if we consider $\tnn \neq 0$). In this case, the integrability conditions \eqref{eq:integrabilityleft} imply:
\begin{equation*}
\tun=\frac{\tln}{\tll} \tul\, .
\end{equation*}

\noindent
This equation, together with condition $\Delta = 0$, implies:
\begin{equation*}
\dd e_u= \tul( e_l+\frac{\tln}{\tll} e_n) \wedge e_u \, , \quad \dd e_l= \tll (e_l+\frac{\tln}{\tll}e_n ) \wedge e_u \, , \quad \dd e_n=\tln(e_l+\frac{\tln}{\tll} e_n) \wedge e_u\, ,
\end{equation*}

\noindent
which must be considered together with equation $(\Theta_{ul}^2 + \Theta_{un}^2) \mathrm{Tr}_{\fre}(\Theta) = 0$ to guarantee that $\Theta(e_u)$ is closed. We distinguish the following possibilities:

\begin{enumerate}[leftmargin=*]
\item $\tul=\tln=0$. In this case, it can be easily seen that $\G$ is isomorphic to $\tau_2 \oplus \mathbb{R}$.

\item $\tul=0$ and $\tln \neq 0$. In this case, we obtain:
\begin{equation*}
\dd e_u= 0 \, , \quad \dd e_l= \tll (e_l+\frac{\tln}{\tll}e_n ) \wedge e_u \, , \quad \dd e_n=\tln(e_l+\frac{\tln}{\tll} e_n) \wedge e_u\, .
\end{equation*}

\noindent
Defining $e_1 := e_l+\frac{\tln}{\tll}e_n$, $e_2 := e_l-\frac{\tll}{\tln}e_n$ and $e_3 :=T e_u$, we obtain:
\begin{equation*}
\dd e_1=  e_1 \wedge e_3 \, , \quad \dd e_2=\dd e_3=0\, ,
\end{equation*}

\noindent
Hence $\G$ is isomorphic to $\tau_2 \oplus \mathbb{R}$. 

\item $\tln=0$, but $\tul \neq 0$. In this case, we obtain:
\begin{equation*}
\dd e_u= \tul e_l \wedge e_u \, , \quad \dd e_l= \tll e_l \wedge e_u \, , \quad \dd e_n=0\, .
\end{equation*}

\noindent
Defining $e_1 := e_l+\frac{\tll}{\tul} e_u$, $e_2 := e_l-\frac{\tll}{\tul} e_u$ and $e_3 := e_n$, we conclude that $\G$ is isomorphic to $\tau_2\oplus \mathbb{R}$ once we impose $\tuu=-T$ in order to satisfy $[\Theta(e_u)]=0$.
 
\item $\tln\neq 0$ and $\tul \neq 0$. Define $e_2 := e_l + \frac{\tln}{\tll} e_n $ and $e_3 := e_l-\frac{\tll}{\tln}e_n$. We obtain:
\begin{equation*}
\dd e_u=\tul e_2 \wedge e_u \, , \quad \dd e_2= T e_2 \wedge e_u \, , \quad \dd e_3=0\,. 
\end{equation*}

\noindent
We  redefine $\tilde{e}_2=e_2-\frac{T}{\tul}e_u$ and $e_1=e_u$, we finally obtain:
\begin{equation*}
\dd e_1=-\tul e_1 \wedge \tilde{e}_2\, , \quad \dd \tilde{e}_2= 0 \, , \quad \dd e_3=0\, ,
\end{equation*}

\noindent
implying that $\G$ is isomorphic to $\tau_2 \oplus \mathbb{R}$ after imposing $\tuu=-T$ in order to guarantee $\Theta(e_u)$ to be closed. 
\end{enumerate}

\item $\Delta \neq 0 $. Since $\Delta \neq 0$, the only possible solution to the integrability conditions \eqref{eq:integrabilityleft} is $\tul=\tun=0$. Hence, non-unimodularity necessarily requires that $T = \tll+\tnn \neq 0$ and the parallel Cauchy differential system reduces to:
\begin{equation*}
\dd e_u=0 \, , \quad \dd e_l= (\tll e_l +\tln e_n) \wedge e_u \, , \quad \dd e_n=(\tln e_l+\tnn e_n) \wedge e_u\, .
\end{equation*}

\noindent
Assume $\tln \neq 0$ and define a global coframe $(e_1,e_2,e_3)$ in terms of the parallel Cauchy coframe $\fre$ as follows:
\begin{equation*}
\begin{pmatrix}
e_1 \\ \\  e_2 \\ \\ e_3
\end{pmatrix}= \begin{pmatrix}
1 & \dfrac{\lambda-\tll}{\tln} & 0 \\ \\
1 & \dfrac{\mu-\tll}{\tln} & 0 \\  \\
0 & 0 &    \lambda  \\
\end{pmatrix}\begin{pmatrix}
e_l \\ \\ e_n \\ \\ e_u \\
\end{pmatrix}\, ,
\end{equation*}

\noindent
where:
\begin{equation*}
\lambda= \frac{1}{2}(T+\text{sign}(T) \sqrt{T^2-4 \Delta})\, , \quad \mu= \frac{1}{2}(T-\text{sign}(T) \sqrt{T^2-4\Delta})\, .
\end{equation*}

\noindent
Note that $\lambda=\mu$ if and only if $\tll=\tnn$ and $\tln=0$, which is not possible since we are assuming $\tln \neq 0 $. The exterior derivative of $(e_1,e_2,e_3)$ can be shown to be given by:
\begin{equation*}
\dd e_1=e_1 \wedge e_3\, , \qquad \dd e_2= \tilde{\mu}  e_2 \wedge e_3 \, , \qquad \dd e_3=0\, ,
\end{equation*}

\noindent
where we defined $\tilde{\mu}=\frac{\mu}{\lambda}$. Note that $1>\vert \tilde{\mu} \vert >0$, since $\tln \neq 0$ and $\Delta \neq 0$. Hence, $\G$ is isomorphic to $\tau_{3, \tilde{\mu}}$. 

\noindent
If $\tln=0$, the exterior derivative of the Cauchy coframe $\fre$ reads:
\begin{equation*}
\dd e_u=0 \, , \quad \dd e_l= \tll e_l \wedge e_u \, , \quad \dd e_n=\tnn e_n \wedge e_u\, .
\end{equation*}

\noindent
Assume first that $\vert \tll \vert \geq \vert \tnn \vert $. Note that $\tll \neq -\tnn$ by non-unimodularity. By rescaling $e_u$, we obtain: 
\begin{equation*}
\dd e_u=0 \, , \quad \dd e_l= e_l \wedge e_u \, , \quad \dd e_n=\frac{\tnn}{\tll} e_n \wedge e_u\, .
\end{equation*}

\noindent
Since $1 \geq \frac{\tnn}{\tll}>-1$ and $\tnn \neq 0$ (otherwise $\Delta=0$), we conclude $\Sigma$ is isomorphic to $\tau_{3, \frac{\tnn}{\tll}}$. An analogous conclusion holds if $\vert \tnn \vert \geq \vert \tll \vert $.
\end{itemize}
\end{proof}

\begin{proposition}
\label{prop:Codazzicity}
The shape operator $\Theta$ of a parallel Cauchy pair $(\fre,\Theta)$ on $\G$ is Codazzi if and only if:
\begin{equation*}
C_a \eqdef 	e_u\otimes \Theta\circ \Theta(e_a) - \Theta(e_u)\otimes \Theta(e_a) - \delta_{ua}\Theta\circ \Theta + \Theta_{ua}\Theta = 0\,
\end{equation*}

\noindent
for every $a = u , l , n$.
\end{proposition}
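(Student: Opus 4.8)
The plan is to use left-invariance to turn the Codazzi condition into a purely algebraic identity among the constant components $\Theta_{ab}$, and then to recognize that identity as the vanishing of the tensors $C_a$.

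Since $(\fre,\Theta)$ is left-invariant, write $\Theta=\sum_{a,b}\Theta_{ab}\,e_a\otimes e_b$ with $\Theta_{ab}\in\RR$ \emph{constant}, and recall from the proof of Lemma~\ref{lemma:eumc} that the Levi-Civita connection of $h_{\fre}$ acts on the coframe by $\nabla^{h_{\fre}}_{e_b}e_a=-\delta_{au}\Theta(e_b)+\Theta_{ab}\,e_u$. Because the $\Theta_{ab}$ are constant, no coefficient-derivative terms survive, and a direct Leibniz computation gives
\[
\nabla^{h_{\fre}}_{e_c}\Theta=e_u\otimes(\Theta\circ\Theta)(e_c)+(\Theta\circ\Theta)(e_c)\otimes e_u-\Theta(e_c)\otimes\Theta(e_u)-\Theta(e_u)\otimes\Theta(e_c),
\]
which is manifestly symmetric in its two tensor slots, as it must be.

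Next I would recall that $\Theta$ is Codazzi precisely when $\nabla^{h_{\fre}}\Theta\in\Gamma(\mathrm{S}^3 T^{\ast}\Sigma)$ is totally symmetric; since $\nabla^{h_{\fre}}_{e_c}\Theta$ is already symmetric in its last two entries, this is equivalent to the vanishing, for all $a,b,c$, of the defect $D_{cab}:=(\nabla^{h_{\fre}}_{e_c}\Theta)(e_a,e_b)-(\nabla^{h_{\fre}}_{e_a}\Theta)(e_c,e_b)$. Evaluating the displayed formula on frame vectors yields $(\nabla^{h_{\fre}}_{e_c}\Theta)(e_a,e_b)=\delta_{ua}(\Theta\circ\Theta)_{cb}+\delta_{ub}(\Theta\circ\Theta)_{ca}-\Theta_{ca}\Theta_{ub}-\Theta_{ua}\Theta_{cb}$; antisymmetrizing in $c\leftrightarrow a$ and discarding the terms symmetric under that swap---using $\Theta_{ca}=\Theta_{ac}$ and $(\Theta\circ\Theta)_{ca}=(\Theta\circ\Theta)_{ac}$---leaves
\[
D_{cab}=\delta_{ua}(\Theta\circ\Theta)_{cb}-\Theta_{ua}\Theta_{cb}-\delta_{uc}(\Theta\circ\Theta)_{ab}+\Theta_{uc}\Theta_{ab}.
\]

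Finally I would expand $C_a$ on the frame, obtaining $(C_a)(e_c,e_b)=\delta_{uc}(\Theta\circ\Theta)_{ab}-\Theta_{uc}\Theta_{ab}-\delta_{ua}(\Theta\circ\Theta)_{cb}+\Theta_{ua}\Theta_{cb}$, and observe that this is exactly $-D_{cab}$. Hence $C_a=0$ for every $a=u,l,n$ if and only if $D_{cab}=0$ for all $a,b,c$, i.e.\ if and only if $\Theta$ is Codazzi, which is the claim. The step demanding the most care is the index bookkeeping in the two evaluations: one must track the Kronecker deltas coming from $e_u$ and invoke both symmetries above to secure the clean cancellations and the precise sign in $(C_a)(e_c,e_b)=-D_{cab}$.
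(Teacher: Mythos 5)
Your proof is correct and follows essentially the same route as the paper: both use the connection formula $\nabla^{h_{\fre}}_{e_b}e_a=-\delta_{au}\Theta(e_b)+\Theta_{ab}\,e_u$ to compute the same expression for $\nabla^{h_{\fre}}_{e_c}\Theta$, and then identify the Codazzi defect with $C_a$. The only difference is presentational — the paper matches the two tensors $\nabla^{\fre}_{e_a}\Theta$ and $(\nabla^{\fre}\Theta)(e_a)$ directly, whereas you antisymmetrize in components via $D_{cab}$ — but the computation is identical.
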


\begin{proof}
We compute:
\begin{equation}
\label{eq:nablaThetaG}
\nabla^{\fre}_{e_a} \Theta = - \Theta(e_u)\otimes \Theta(e_a) - \Theta(e_a)\otimes \Theta(e_u) + \Theta\circ \Theta(e_a) \otimes e_u + e_u\otimes \Theta\circ \Theta(e_a)\, , 
\end{equation}
	
\noindent
Similarly:
\begin{equation*}
(\nabla^{\fre} \Theta)(e_a) = - \Theta(e_u,e_a) \Theta + \delta_{ua} \Theta\circ \Theta  + \Theta\circ \Theta(e_a)\otimes e_u - \Theta(e_a)\otimes \Theta(e_u)\, .
\end{equation*}

\noindent
Since $\Theta$ is Codazzi if and only if $\nabla^{\fre}_{e_a} \Theta = (\nabla^{\fre} \Theta)(e_a) $ for all $a=u,l,n$, matching the previous pair of equations we obtain:
\begin{equation*}
e_u\otimes \Theta\circ \Theta(e_a) - \Theta(e_u)\otimes \Theta(e_a) - \delta_{ua}\Theta\circ \Theta + \Theta_{ua}\Theta = 0\, .
\end{equation*}
\end{proof} 
\noindent
\begin{remark}
\label{remark:codazzicity}
It is not hard to see that: 
\begin{equation*}
C_a (e_b,e_d)=-C_b(e_a,e_d)\, ,
\end{equation*}
for every $a, b, c=u, l, n$. We will be use this identity momentarily.
\end{remark}

\begin{proposition}
\label{prop:consricciflat}
A parallel Cauchy pair $(\fre,\Theta)$ on $\G$ is constrained Ricci-flat if and only if:
\begin{equation*}
\Theta(e_u,e_u) \mathrm{Tr}_{\fre}\Theta = \vert \Theta \vert^2_{\fre}\, .
\end{equation*}
\end{proposition}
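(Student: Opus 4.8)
The plan is to reduce the constrained Ricci-flat condition to a single scalar identity and then to read it off from the scalar curvature formula \eqref{eq:scalarcurvature}, exploiting left-invariance to discard all derivative terms. By Proposition \ref{prop:hamiltonianmomentum}, the Hamiltonian and momentum constraints in \eqref{eq:RicciflatonSigma} are equivalent for any parallel Cauchy pair, so $(\fre,\Theta)$ is constrained Ricci-flat if and only if the Hamiltonian constraint $\mathrm{R}^{\fre} = \vert\Theta\vert^2_{\fre} - \mathrm{Tr}_{\fre}(\Theta)^2$ holds. Substituting the explicit expression \eqref{eq:scalarcurvature} for $\mathrm{R}^{\fre}$ into this equation, the two terms $\vert\Theta\vert^2_{\fre} - \mathrm{Tr}_{\fre}(\Theta)^2$ cancel and I am left with the equivalent condition
\[
\mathrm{div}_{\fre}(\Theta)(e^{\sharp}_u) = \dd\mathrm{Tr}_{\fre}(\Theta)(e^{\sharp}_u)\, .
\]

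Next I would invoke left-invariance. Since $\fre$ is a left-invariant coframe and $\Theta$ a left-invariant shape operator, the components $\Theta_{ab}$ are constants, so $\mathrm{Tr}_{\fre}(\Theta) = \sum_a \Theta_{aa}$ is a constant function and $\dd\mathrm{Tr}_{\fre}(\Theta) = 0$. Thus the constrained Ricci-flat condition collapses to the single scalar equation $\mathrm{div}_{\fre}(\Theta)(e^{\sharp}_u) = 0$. The computational heart of the argument is then to evaluate $\mathrm{div}_{\fre}(\Theta)(e^{\sharp}_u) = \sum_a (\nabla^{h_{\fre}}_{e_a}\Theta)(e_a, e_u)$ using the covariant-derivative identity $\nabla^{h_{\fre}}_{e_b}e_a = -\delta_{au}\Theta(e_b) + \Theta(e_a,e_b)e_u$ established in the proof of Lemma \ref{lemma:eumc}. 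Since all coefficient derivatives $e_a(\Theta_{bc})$ vanish, a direct expansion yields
\[
\mathrm{div}_{\fre}(\Theta)(e^{\sharp}_u) = \vert\Theta\vert^2_{\fre} - \Theta(e_u,e_u)\,\mathrm{Tr}_{\fre}(\Theta)\, ,
\]
and setting this equal to zero gives exactly the claimed identity $\Theta(e_u,e_u)\,\mathrm{Tr}_{\fre}(\Theta) = \vert\Theta\vert^2_{\fre}$.

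I expect the only delicate point to be the bookkeeping of the quadratic terms in this last expansion. Expanding $(\nabla^{h_{\fre}}_{e_a}\Theta)(e_a,e_u)$ produces, besides the term $-\Theta(e_a,e_a)\Theta(e_u,e_u)$ summing to $-\Theta(e_u,e_u)\mathrm{Tr}_{\fre}(\Theta)$, two quadratic contributions: one of the form $\sum_a \Theta(e_a,\Theta(e_a)) = \vert\Theta\vert^2_{\fre}$, and a pair of terms proportional to $(\Theta\circ\Theta)(e_u,e_u)$ and $\sum_a \Theta(e_u,e_a)^2$. By symmetry of $\Theta$ one has $(\Theta\circ\Theta)(e_u,e_u) = \sum_a \Theta(e_u,e_a)^2$, so these two cancel precisely, leaving the formula above. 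Everything else is routine index manipulation, so beyond verifying this cancellation I anticipate no genuine obstacle.
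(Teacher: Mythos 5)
Your proof is correct and takes essentially the same route as the paper: both arguments use Proposition \ref{prop:hamiltonianmomentum} to reduce constrained Ricci-flatness to a single constraint, use left-invariance ($\dd\mathrm{Tr}_{\fre}(\Theta)=0$) to collapse it to the scalar equation $\mathrm{div}_{\fre}(\Theta)(e^{\sharp}_u)=0$ (you enter through the Hamiltonian constraint and the scalar curvature formula \eqref{eq:scalarcurvature}, the paper through the momentum constraint and Lemma \ref{lemma:eumc}, which amounts to the same thing), and then evaluate the divergence in the left-invariant frame. The quadratic cancellation you single out is exactly the content of the paper's computation $\mathrm{div}_{\fre}\Theta=-\mathrm{Tr}_{\fre}(\Theta)\Theta(e_u)+\vert\Theta\vert^2_{\fre}\,e_u$ via Equation \eqref{eq:nablaThetaG}, so your expansion matches the paper's verbatim in content.
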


\begin{proof}
By Proposition \ref{prop:hamiltonianmomentum}, the Hamiltonian and momentum constraints for a Cauchy pair are equivalent. We consider the momentum constraint. We have $\dd \mathrm{Tr}_{h_{\fre}} (\Theta)=0$. Hence, by Lemma \ref{lemma:eumc} the constraint Ricci-flatness condition for $(\Theta,\fre)$ is equivalent to:
\begin{equation*}
\mathrm{div}_{\fre} (\Theta)(e_u)= 0\, .
\end{equation*}

\noindent
Using Equation \eqref{eq:nablaThetaG} we compute:
\begin{equation*}
	\mathrm{div}_{\fre} \Theta = \sum_a (\nabla^{\fre}_{e_a}\Theta)(e_a) = - \mathrm{Tr}_{\fre}(\Theta) \Theta(e_u) + \vert \Theta\vert_{\fre}^2 e_u\, ,
\end{equation*}

\noindent
and therefore we conclude.
\end{proof}
\noindent
\begin{lemma}
\label{lemma:whencodazzi}
The shape operator $\Theta$ of a parallel Cauchy pair $(\fre, \Theta)$ is Codazzi if and only if it satisfies one of the following conditions:
\begin{itemize}[leftmargin=*]
\item $\tul=\tun=\tln=0$, $\tll^2=\tll \tuu$, $\tnn^2=\tnn \tuu\,$.
\item $\Theta(e_u)=T e_u$,  $\Delta=0\,$.
\end{itemize}
 
\end{lemma}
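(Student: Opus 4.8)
The plan is to translate the Codazzi condition into the algebraic system provided by Proposition~\ref{prop:Codazzicity}, namely $C_a=0$ for $a=u,l,n$, and then to solve this system in the constant coefficients $\Theta_{ab}$ using the left-invariant structure equations \eqref{eq:leftinvcauchydifferential} and their integrability consequences \eqref{eq:integrabilityleft}. Since $\fre=(e_u,e_l,e_n)$ is orthonormal for $h_{\fre}$, I would first expand $C_a$ into scalar components,
\[
C_a(e_b,e_d)=\delta_{ub}(\Theta\circ\Theta)_{ad}-\Theta_{ub}\Theta_{ad}-\delta_{ua}(\Theta\circ\Theta)_{bd}+\Theta_{ua}\Theta_{bd}\, .
\]
The antisymmetry $C_a(e_b,e_d)=-C_b(e_a,e_d)$ of Remark~\ref{remark:codazzicity} immediately forces the components with $a=b$ to vanish, so only the index pairs $(a,b)\in\{(u,l),(u,n),(l,n)\}$ remain. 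A short computation shows that the three components attached to $(a,b)=(l,n)$ are, up to sign, exactly the left-hand sides of the integrability conditions \eqref{eq:integrabilityleft}, hence vanish automatically. Thus the entire Codazzi system reduces to the components of $C_u(e_l,\cdot)$ and $C_u(e_n,\cdot)$, which I would record as
\[
(\Theta\circ\Theta)(e_u,e_l)=0,\qquad (\Theta\circ\Theta)(e_u,e_n)=0,
\]
\[
\tuu\tll=2\tul^2+\tll^2+\tln^2,\quad \tuu\tnn=2\tun^2+\tln^2+\tnn^2,\quad \tln(\tuu-T)=2\tul\tun\, .
\]

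I would then extract two global consequences. A short manipulation of \eqref{eq:integrabilityleft} (using $\tln\tul=\tll\tun$ and $\tnn\tul=\tln\tun$) gives $\Delta\,\tul=\Delta\,\tun=0$, so that either $\Delta=0$ or $\tul=\tun=0$. Summing the two diagonal equations above and eliminating $\tll^2+\tnn^2=T^2-2(\Delta+\tln^2)$ yields the trace identity
\[
T(\tuu-T)=2\left(\tul^2+\tun^2-\Delta\right)\, ,
\]
which is the source of the contradiction in the delicate case.

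With these in hand the argument splits according to whether $(\tul,\tun)$ vanishes. If $\tul=\tun=0$, the first two equations are trivial and the last reads $\tln(\tuu-T)=0$: when $\tln=0$ the diagonal equations collapse to $\tll^2=\tll\tuu$ and $\tnn^2=\tnn\tuu$, giving the first alternative; when $\tln\neq0$ one gets $\tuu=T$, the diagonal equations force $\tll\tnn=\tln^2$ (so $\Delta=0$), and $\Theta(e_u)=\tuu e_u=Te_u$, giving the second alternative. If instead $(\tul,\tun)\neq(0,0)$, then $\Delta=0$; substituting the integrability relations $\tln\tun=\tnn\tul$ and $\tln\tul=\tll\tun$ into $(\Theta\circ\Theta)(e_u,e_l)=0$ and $(\Theta\circ\Theta)(e_u,e_n)=0$ collapses them to $\tul(\tuu+T)=0$ and $\tun(\tuu+T)=0$, forcing $\tuu=-T$; but then the trace identity with $\Delta=0$ becomes $-2T^2=2(\tul^2+\tun^2)>0$, a contradiction, so this case never occurs. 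Conversely, one verifies directly that both listed alternatives satisfy the reduced system, which completes the equivalence.

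I expect the main obstacle to be precisely the last case $(\tul,\tun)\neq0$: the relevant cancellations are invisible at the level of the Codazzi equations alone and emerge only after feeding the integrability conditions \eqref{eq:integrabilityleft} back into the mixed components $(\Theta\circ\Theta)(e_u,e_l)$ and $(\Theta\circ\Theta)(e_u,e_n)$; recognizing that this forces $\tuu=-T$ and clashes with the positivity in the trace identity is the one genuinely non-mechanical step. The preliminary reduction---trimming the nine scalar components to five via the antisymmetry of Remark~\ref{remark:codazzicity} together with \eqref{eq:integrabilityleft}---is routine but must be organized carefully to keep the bookkeeping under control.
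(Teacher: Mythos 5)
Your proposal is correct and takes essentially the same route as the paper's proof: both translate the Codazzi condition through Proposition \ref{prop:Codazzicity} into the quadratic system \eqref{eq:proofcodazzi} for the constant components $\Theta_{ab}$, run the identical subcase analysis on $\tln$ when $\tul=\tun=0$, and kill the remaining case by summing the two diagonal equations --- your trace identity $T(\tuu-T)=2(\tul^2+\tun^2-\Delta)$ specialized at $\tuu=-T$, $\Delta=0$ is exactly the paper's sum-of-squares identity under $\tuu=-T$. The only organizational difference is that the paper imports the dichotomy $\tul=\tun=0$ or $\tuu=-T$ from the cohomological condition via Proposition \ref{prop:closedtheta}, whereas you recover it internally from the $C_u(\cdot\,,e_u)$ components of the Codazzi tensor (which are precisely the closedness equations for $\Theta(e_u)$); this is a harmless, and in fact clarifying, variation.
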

\begin{proof}
Let $C_a \in \Gamma(T^* \G \otimes T^*\G)$ denote the tensor defined in Proposition \ref{prop:Codazzicity}. Remark \ref{remark:codazzicity} states that the only non-trivial and independent components are those corresponding to $C_a(e_u,e_l)$, $C_a(e_u,e_n)$ and $C(e_l,e_n)$. Imposing these components to vanish we obtain:
\begin{eqnarray}
\label{eq:proofcodazzi}
 & 2 \tul^2+\tll^2+\tln^2-\tuu\tll=0 \, , \quad 2\tun^2+\tnn^2+\tln^2-\tuu \tnn=0\, , \nonumber\\
 & 2\tul \tun + \tnn\tln+ \tln \tll-\tuu \tln=0\, , 
\end{eqnarray}

\noindent
In order to solve them we impose the cohomological condition as stated in Proposition \ref{prop:closedtheta}. Since the cohomological condition is satisfied if either $\tul=\tun=0$ or $\tuu=-T$, we distinguish between these two cases:
\begin{itemize}[leftmargin=*]
\item $\tul=\tun=0$. Let us split this case into two subcategories:
\begin{itemize}[leftmargin=*]
\item $\tln=0$. One notices that the equations reduce directly to $\tuu \tnn=\tnn^2$ and $\tuu\tll=\tll^2$. 
\item $\tln \neq 0$. In such a case, from the last equation of \eqref{eq:proofcodazzi} one finds $\tuu=T$ and, upon substitution in the remaining equations they become linearly dependent and equivalent to the condition $\Delta=0$. 
\end{itemize}
\item $\tuu=-T$. In such a case, by summing the first and the second equations of \eqref{eq:proofcodazzi} and performing explicitly the substitution $\tuu=-T$, we find
\begin{equation}
2\tul^2+2\tun^2+(\tll+\tnn)^2+2\tln^2+\tll^2+\tnn^2=0\,.
\end{equation}
This implies $\tul=\tun=\tll=\tnn=\tln=\tuu=0$, which brings us to the previous bullet-point. 
\end{itemize}
\end{proof}

\noindent
We elaborate now on the results of the previous discussion in order to obtain a full classification result about left-invariant parallel Cauchy pairs $(\fre,\Theta)$ on connected and simply connected three-dimensional Lie groups, characterizing those which are in addition Codazzi or constrained Ricci-flat. Collecting all results from Propositions \ref{prop:cauchyuni} and \ref{prop:Cauchynonuni} and bearing in mind Proposition \ref{prop:consricciflat} and Lemma \ref{lemma:whencodazzi}, we obtain the following result.

\begin{theorem}
A connected and simply-connected Lie group $\G$ admits left-invariant parallel Cauchy pairs (respectively constrained Ricci-flat parallel Cauchy pairs or a Codazzi parallel Cauchy pairs) if and only if $\G$ is isomorphic to one of the Lie groups listed in the Table below. If that is the case, a left-invariant shape operator $\Theta$ belongs to a Cauchy pair $(\fre,\Theta)$ for certain left-invariant coframe $\fre$ if and only if $\Theta$ is of the form listed below when written in terms of $\fre = (e_u,e_l,e_n)$:

\renewcommand{\arraystretch}{1.5}
\begin{center}
\begin{tabular}{|  p{1cm}| p{5cm} | p{3.7cm} | p{3cm} |}
\hline
$\mathrm{G}$ & \emph{Cauchy parallel pair} & \emph{Constrained Ricci-flat} & \emph{Codazzi} \\ \hline
$\mathbb{R}^3$ & $\Theta=\Theta_{uu} e_u \otimes e_u$ &  $\Theta=\Theta_{uu} e_u \otimes e_u$ & $\Theta=\Theta_{uu} e_u \otimes e_u$  \\ \hline
\multirow{2}*{$ \mathrm{E}(1,1)$} & $\Theta=\Theta_{uu} e_u \otimes e_u+ \Theta_{i j} e_i \otimes e_j$ & \multirow{2}*{\emph{Not allowed}} & \multirow{2}*{\emph{Not allowed}} \\ & $i,j=l,n,\, \quad \Theta_{ll}=-\Theta_{nn}$& & \\ \hline \multirow{11}*{$\tau_2 \oplus \mathbb{R}$} & $\Theta=(\tul e_l +\tun e_n) \odot e_u$  & \multirow{2}*{\emph{Not allowed}} & \multirow{2}*{\emph{Not allowed}}\\ & $\tul^2+\tun^2 \neq 0$ & &  \\ \cline{2-4} & $\Theta=\Theta_{uu} e_u \otimes e_u+ \Theta_{i j} e_i \otimes e_j$ &  $\Theta=T e_u \otimes e_u+ \Theta_{i j} e_i \otimes e_j$ &  $\Theta=T e_u \otimes e_u+ \Theta_{i j} e_i \otimes e_j$ \\ & $\begin{aligned} &i,j=l,n,\, \\ &T \neq 0\, , \Delta=0 \end{aligned}$ & $\begin{aligned} &i,j=l,n,\, \\ &T \neq 0\, , \Delta=0 \end{aligned}$& $\begin{aligned} &i,j=l,n,\,  \\ &T \neq 0\, , \Delta=0 \end{aligned}$\\ \cline{2-4} & $\Theta=-T e_u \otimes e_u+\tul e_u \odot e_l + \tll e_l \otimes e_l\, , \quad \tul, \tnn \neq 0$ & \emph{Not allowed} & \emph{Not allowed} \\ \cline{2-4} & $\Theta=-T e_u \otimes e_u+\tun e_u \odot e_n + \tnn e_n \otimes e_n\, , \quad \tun, \tll \neq 0$ & \emph{Not allowed} & \emph{Not allowed} \\\cline{2-4} & $\Theta=-T e_u\otimes e_u + \tul e_u \odot e_l+ \tun e_u \odot e_n+\Theta_{i j} e_i \otimes e_j\,$ & \multirow{3}*{\emph{Not allowed}}&  \multirow{3}*{\emph{Not allowed}} \\  & $\begin{aligned}&i,j=l,n,\, \, \tln( \tul^2+\tun^2) \neq 0\, , \\&\tnn=\frac{\tun}{\tul} \tln\, , \tll=\frac{\tul}{\tun} \tln \end{aligned}$ & & \\ & & & \\ \hline \multirow{2}*{$ \tau_{3,\mu}$} & $\Theta=\Theta_{uu} e_u \otimes e_u+ \Theta_{i j} e_i \otimes e_j$ & $\Theta=\left (\frac{T^2-2\Delta}{T} \right ) e_u \otimes e_u+ \Theta_{i j} e_i \otimes e_j$  & \multirow{2}*{\emph{Not allowed}} \\ & $i,j=l,n,\, \quad T, \Delta \neq 0$& $i,j=l,n,\, \quad T, \Delta \neq 0$ & \\ \hline

\end{tabular}

\end{center}
Regarding the case in which $\G \simeq \tau_{3,\mu}$:
\begin{itemize}[leftmargin=*]

\item If $\tln \neq 0$, then
\begin{equation*}
\mu = \frac{T-\text{\emph{sign}}(T)\sqrt{T^2-4\Delta}}{T+\text{\emph{sign}}(T)\sqrt{T^2-4\Delta}}\, .
\end{equation*}
\item If $\tln=0$ and $\vert\tll\vert \geq \vert \tnn \vert $, then
\begin{equation*}
\mu=\frac{\tnn}{\tll}\,.
\end{equation*}
\item If $\tln=0$ and $\vert \tnn \vert \geq \vert \tll \vert $, then
\begin{equation*}
\mu=\frac{\tll}{\tnn}\,.
\end{equation*}
\end{itemize}
 
\renewcommand{\arraystretch}{1}
\label{thm:allcauchygroups}
\end{theorem}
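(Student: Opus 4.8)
The plan is to treat Theorem \ref{thm:allcauchygroups} as a synthesis of the structural results obtained in this section, organising the argument by the two natural dichotomies: unimodular versus non-unimodular, and $\Delta = 0$ versus $\Delta \neq 0$. By the characterization \eqref{eq:unimodularitycond}, unimodularity is equivalent to $T = \tll + \tnn = 0$ together with $\tul = \tun = 0$, so these two dichotomies partition the space of left-invariant parallel Cauchy pairs into four mutually exclusive families. Propositions \ref{prop:cauchyuni} and \ref{prop:Cauchynonuni} already identify the underlying Lie group in each family — $\mathbb{R}^3$ and $\mathrm{E}(1,1)$ in the unimodular case, $\tau_2 \oplus \mathbb{R}$ and $\tau_{3,\mu}$ in the non-unimodular case — which establishes the first column of the table and shows that no other group can occur.

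Next I would reconstruct the explicit form of the admissible shape operator $\Theta$ in each family. Reading off the constraints imposed during the proofs of Propositions \ref{prop:cauchyuni} and \ref{prop:Cauchynonuni}, one combines the left-invariant Cauchy differential system \eqref{eq:leftinvcauchydifferential}, its integrability conditions \eqref{eq:integrabilityleft}, and the cohomological condition $(\tul^2 + \tun^2)\,\mathrm{Tr}_{\fre}(\Theta) = 0$ of Proposition \ref{prop:closedtheta} to determine which components $\Theta_{ab}$ survive and what relations they satisfy. For instance, on $\mathbb{R}^3$ the vanishing of $\dd\fre$ forces every component except $\tuu$ to vanish, while for $\mathrm{E}(1,1)$ unimodularity leaves $\tuu$ free and imposes $\tll = -\tnn$. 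The only genuinely branched case is $\tau_2 \oplus \mathbb{R}$, where the sub-case analysis in the proof of Proposition \ref{prop:Cauchynonuni} together with the cohomological condition produces the several rows listed for this group; each sub-case must be matched to the corresponding normal form of $\Theta$ written in the original coframe.

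Finally I would fill in the two remaining columns by applying the criteria already derived. For the constrained Ricci-flat column I would evaluate the single scalar condition $\Theta(e_u,e_u)\,\mathrm{Tr}_{\fre}(\Theta) = \vert\Theta\vert^2_{\fre}$ of Proposition \ref{prop:consricciflat} on each normal form; a positive entry amounts to solving this equation for $\tuu$ — on $\tau_{3,\mu}$, since $\tul=\tun=0$, the condition reduces to $\tuu\, T = T^2 - 2\Delta$, yielding $\tuu = (T^2 - 2\Delta)/T$ — whereas a \emph{Not allowed} entry is established by showing the equation forces the remaining components to degenerate, typically collapsing $\Delta$ to $0$ or violating the defining inequality of the case. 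For the Codazzi column I would substitute each normal form into the two alternative conditions of Lemma \ref{lemma:whencodazzi}, checking directly whether $\tul = \tun = \tln = 0$ with $\tll^2 = \tll\tuu$ and $\tnn^2 = \tnn\tuu$, or alternatively $\Theta(e_u) = T e_u$ with $\Delta = 0$, can hold compatibly with the case hypotheses.

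I expect the main obstacle to be careful bookkeeping rather than any conceptual difficulty: the $\tau_2 \oplus \mathbb{R}$ row requires tracking each of the several coframe redefinitions used in Proposition \ref{prop:Cauchynonuni} and verifying that the resulting $\Theta$ is expressed consistently in the \emph{original} coframe $\fre = (e_u,e_l,e_n)$, while the numerous \emph{Not allowed} entries each demand a short but genuine computation confirming that the Ricci-flat or Codazzi criterion is incompatible with $\Delta \neq 0$ (or with non-unimodularity). Ensuring that the positive and negative verifications are exhaustive across all sub-cases is where most of the care will be needed.
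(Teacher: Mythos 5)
Your proposal is correct and follows essentially the same route as the paper: the theorem is obtained there precisely by collecting Propositions \ref{prop:cauchyuni} and \ref{prop:Cauchynonuni} (group identification via the unimodular/non-unimodular and $\Delta=0$/$\Delta\neq 0$ dichotomies, using the cohomological condition of Proposition \ref{prop:closedtheta} and the integrability conditions \eqref{eq:integrabilityleft}) and then filling in the last two columns with Proposition \ref{prop:consricciflat} and Lemma \ref{lemma:whencodazzi}. Your identification of the $\tau_2\oplus\mathbb{R}$ sub-case bookkeeping and the evaluation $\tuu = (T^2-2\Delta)/T$ on $\tau_{3,\mu}$ as the main points of care matches where the actual work lies.
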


\noindent
We hope that the previous theorem, together with Lemma \ref{lemma:globhyperspinor}, can serve as the basis of a formulation of the parallel spinor flow on Lie groups and homogeneous three-manifolds, in the spirit of \cite{Panagiotis}.


\section{Comoving parallel spinor flows}
\label{sec:examplesflows}


In this section we consider a specific type of parallel spinor flow which admits a particularly neat geometric description,  with the goal of obtaining explicit \emph{time-dependent} Lorentzian four-manifolds  admitting parallel spinors. 


\subsection{Globally hyperbolic comoving spacetimes}


We consider a particular class of parallel spinor flows defined by imposing the condition $\lambda_t = 1$ for all $t\in \mathbb{R}$.

\begin{definition}
\label{def:comovingparallelspinorflow}
A parallel spinor flow $(\left\{ \lambda_t \right\}_{t\in \mathbb{R}} , \left\{ h_t \right\}_{t\in \mathbb{R}},\left\{ u^0_t\right\}_{t\in \mathbb{R}},\left\{ u^{\perp}_t\right\}_{t\in \mathbb{R}} ,\left\{ l^{\perp}_t\right\}_{t\in \mathbb{R}})$ is \emph{comoving} if $\lambda_t = 1$ for every $t\in \mathbb{R}$. 
\end{definition}

\noindent
A comoving parallel spinor flow on a manifold of the form $M = \mathbb{R}\times \Sigma$, where $\Sigma$ is an oriented three-manifold, will be always understood as a parallel spinor flow on $\Sigma$ with respect to the cartesian coordinate of $\mathbb{R}$.

\begin{definition}
\label{def:comovinggloballyhyper}
A four-dimensional space-time $(M,g)$ is a \emph{comoving globally hyperbolic space-time} if it is isometric to a model of the form:
\begin{equation*}
(M,g) = (\cI\times \Sigma , -\dd t\otimes \dd t + h_t)\, ,
\end{equation*}
	
\noindent
for a family $\left\{ h_t\right\}_{t\in\cI}$ of complete Riemannian metrics on $\Sigma$, where $\cI \subset \mathbb{R}$ is an interval.
\end{definition}

\noindent
A metric of the type $g=-\dd t^2 + h_t$ will be called a \emph{comoving globally hyperbolic}. 

\begin{remark}
The term \emph{comoving} is motivated by the fact that the local metric of a comoving observer in a cosmological background is of comoving globally hyperbolic type. In particular, the time factor of the metric is constant.
\end{remark}

\begin{theorem}
\label{thm:comovingparallelflow}
An oriented four-manifold $(M,g)$ admits a comoving parallel spinor flow if and only if $\Sigma $ admits a family:
\begin{equation*}
\left\{ \fre^t := (e_u^t , e_l^t, e_n^t) \colon \Sigma \to \mathrm{F}(\Sigma) \right\}_{t\in \mathbb{R}}\, ,
\end{equation*}
	
\noindent
of sections of the oriented frame bundle $\mathrm{F}(\Sigma)$ of $\Sigma$ satisfying the following system of partial differential equations:
\begin{eqnarray}
\label{eq:globhyperspinorcosmo}
\partial_t \fre^t +  \Theta_t(\fre^t) =  0\, , \quad \dd \fre^t = \Theta_t(\fre^t) \wedge e^t_u\, ,  \quad  [\Theta_t(e^t_u)]=0\in H^1(\Sigma,\mathbb{R})\, , \quad \partial_t\Theta_t(e^t_u) = 0\, ,
\end{eqnarray}

\noindent
where:
\begin{equation*}
\Theta_t = - \frac{1}{2}\partial_t (e_u^t\otimes e_u^t + e_l^t\otimes e_l^t + e_n^t\otimes e_n^t)\, .
\end{equation*} 

\noindent
If this is the case, the corresponding comoving globally hyperbolic metric is given by:
\begin{equation*}
	g = - \dd t\otimes \dd t + h_{\fre^t}\, , \qquad  h_{\fre^t} = e_u^t\otimes e_u^t + e_l^t\otimes e_l^t + e_n^t\otimes e_n^t\, .
\end{equation*} 
\end{theorem}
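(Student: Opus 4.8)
The plan is to specialize the general equivalence established in Lemma \ref{lemma:globhyperspinor} to the comoving case $\lambda_t = 1$ and to repackage the resulting equations in terms of the coframe family $\{\fre^t\}$. First I would set $\lambda_t = 1$ throughout, so that $\mathfrak{t}_t = \dd t$ and the formula $\Theta_t = -\frac{1}{2\lambda_t}\partial_t h_t$ becomes precisely the displayed expression for $\Theta_t$ once we write $h_{\fre^t} = e_u^t\otimes e_u^t + e_l^t\otimes e_l^t + e_n^t\otimes e_n^t$. Since $\lambda_t$ is constant, we have $\dd\lambda_t = 0$, which kills the right-hand sides of the first equation in \eqref{eq:globhyperspinorI} and simplifies the second; moreover the identity $\partial_t u^0_t = \dd\lambda_t(u^{\perp}_t)$ from Lemma \ref{lemma:globhyperspinor} forces $u^0_t$ to be constant in $t$, which lets me normalize and define $e_u^t = u^{\perp}_t/u^0_t$ exactly as in the proof of Proposition \ref{prop:spinoronrflat}.

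The key steps, in order, are then as follows. I would introduce $e_u^t \eqdef u^{\perp}_t/u^0_t$ and $e_l^t \eqdef l^{\perp}_t$, and complete the coframe by $e_n^t \eqdef \ast_{h_t}(e_u^t\wedge e_l^t)$, exactly mirroring the correspondence between tuples $(h,\Theta,e_u,e_l)$ and pairs $(\fre,\Theta)$ used earlier. With $\lambda_t = 1$, the spatial equations \eqref{eq:globhyperspinorII} become, after dividing by $u^0_t$, the conditions $\nabla^{h_t} e_u^t + \Theta_t = \Theta_t(e_u^t)\otimes e_u^t$ and $\nabla^{h_t} e_l^t = \Theta_t(e_l^t)\otimes e_u^t$, which at each fixed $t$ are precisely equations \eqref{eq:spinoronSigma} for the Cauchy pair $(\fre^t,\Theta_t)$; by Proposition \ref{prop:exder} these are equivalent to the exterior differential system $\dd\fre^t = \Theta_t(\fre^t)\wedge e_u^t$ together with the cohomological constraint $[\Theta_t(e_u^t)] = 0$. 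This accounts for the second and third equations in \eqref{eq:globhyperspinorcosmo}. The temporal equations \eqref{eq:globhyperspinorI}, with $\dd\lambda_t = 0$, reduce to $\partial_t u^{\perp}_t + \Theta_t(u^{\perp}_t) = 0$ and $\partial_t l^{\perp}_t + \Theta_t(l^{\perp}_t) = 0$; rewriting these in terms of $e_u^t, e_l^t$ (using that $u^0_t$ is constant) gives $\partial_t e_u^t + \Theta_t(e_u^t) = 0$ and $\partial_t e_l^t + \Theta_t(e_l^t) = 0$. The analogous evolution for $e_n^t$ then follows by differentiating its definition and using the evolution of $e_u^t, e_l^t$ together with compatibility of $\ast_{h_t}$, yielding the first equation $\partial_t\fre^t + \Theta_t(\fre^t) = 0$.

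Finally I would derive the last equation $\partial_t\Theta_t(e_u^t) = 0$. This is the analogue of the identity $\partial_t u^0_t = \dd\lambda_t(u^{\perp}_t) = 0$ from Lemma \ref{lemma:globhyperspinor}: since in the comoving case $u^0_t$ is constant, exactly the same manipulation of the restriction $(u^0_t)^2 = |u^{\perp}_t|^2_{h_t}$ that produced $\dd u^0_t + \Theta_t(u^{\perp}_t) = 0$ now shows that $\Theta_t(e_u^t)$ is independent of $t$. Conversely, given any family $\{\fre^t\}$ solving \eqref{eq:globhyperspinorcosmo}, I would define $g = -\dd t\otimes\dd t + h_{\fre^t}$, set $u^0_t = 1$, $u^{\perp}_t = e_u^t$, $l^{\perp}_t = e_l^t$, and verify directly that the parallel spinor flow equations \eqref{eq:globhyperspinorI}, \eqref{eq:globhyperspinorII}, \eqref{eq:restrictionul} hold, invoking Lemma \ref{lemma:globhyperspinor} to conclude existence of the parabolic pair and hence the parallel spinor. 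The main obstacle I anticipate is the bookkeeping in the third step: deducing the $e_n^t$ evolution requires differentiating the Hodge-star relation in a $t$-dependent metric and confirming that the natural candidate $\partial_t e_n^t + \Theta_t(e_n^t) = 0$ is forced rather than merely consistent, and ensuring that the algebraic constraints \eqref{eq:restrictionul}, in particular $|e_l^t|^2_{h_{\fre^t}} = 1$, are preserved under the flow $\partial_t\fre^t = -\Theta_t(\fre^t)$, which must be checked against $\partial_t h_{\fre^t} = -2\Theta_t$.
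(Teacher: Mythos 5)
Your forward direction is essentially the paper's own argument: you introduce the same coframe $(e^t_u,e^t_l,e^t_n)=(u^{\perp}_t/u^0_t,\,l^{\perp}_t,\,\ast_{h_t}(e^t_u\wedge e^t_l))$, use $\dd\lambda_t=0$ to get $\partial_t u^0_t=0$, identify the spatial equations \eqref{eq:globhyperspinorII} with the parallel Cauchy system via Proposition \ref{prop:exder}, and obtain $\partial_t\Theta_t(e^t_u)=0$ from $\Theta_t(e^t_u)=-\dd\log(u^0_t)$ with $u^0_t$ time-independent. That half is correct and matches the paper step by step, including the (equally terse) treatment of the $e^t_n$ evolution.

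The converse, however, contains a genuine error: you cannot set $u^0_t=1$. With $u^0_t=1$ and $u^{\perp}_t=e^t_u$, the first equation of \eqref{eq:globhyperspinorII} reads $\nabla^{h_t}e^t_u+\Theta_t=0$, whereas the system \eqref{eq:globhyperspinorcosmo} (equivalently, equations \eqref{eq:spinoronSigma} at each fixed $t$) only yields $\nabla^{h_t}e^t_u+\Theta_t=\Theta_t(e^t_u)\otimes e^t_u$. Your verification would therefore force $\Theta_t(e^t_u)=0$, which is not implied by \eqref{eq:globhyperspinorcosmo}; for instance, in the diagonal examples of \secref{sec:diagonalexample} one has $\Theta_t(e^t_u)=-\mathfrak{b}\,\dd x\neq 0$ whenever $\mathfrak{b}\neq 0$. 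This is precisely why the third and fourth equations of \eqref{eq:globhyperspinorcosmo} are part of the system: in the converse one must \emph{reconstruct} $u^0$ by integrating $\dd\log(u^0)=-\Theta_t(e^t_u)$, which exists globally because $[\Theta_t(e^t_u)]=0$, and which can be chosen independent of $t$ because $\partial_t\Theta_t(e^t_u)=0$; one then sets $u^{\perp}_t\eqdef u^0\, e^t_u$ (not $e^t_u$ itself), so that $\partial_t(u^0 e^t_u)=-\Theta_t(u^0 e^t_u)$ recovers the first equation of \eqref{eq:globhyperspinorI} and the spatial equations \eqref{eq:globhyperspinorII} follow from the Cartan structure equations, as in the paper. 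With that correction the remainder of your converse (invoking Lemma \ref{lemma:globhyperspinor}) goes through.
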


\begin{proof}
Consider a solution of equations \eqref{eq:globhyperspinorI}, \eqref{eq:globhyperspinorII} and \eqref{eq:restrictionul} of the form:
\begin{equation*}
(\left\{ \lambda_t = 1 \right\}_{t\in \mathbb{R}} , \left\{ h_t \right\}_{t\in \mathbb{R}},\left\{ u^0_t\right\}_{t\in \mathbb{R}},\left\{ u^{\perp}_t , l^{\perp}_t\right\}_{t\in \mathbb{R}})\, .
\end{equation*}
	
\noindent
To every such solution we canonically associate a family of sections $\left\{\fre^t\right\}_{t\in\mathbb{R}} \colon \Sigma \to \mathrm{F}(\Sigma)$ defined as follows:
\begin{equation*}
\fre^t = (e_u^t , e_l^t, e_n^t) \eqdef ( u^{\perp}_t /u^0_t , l^{\perp}_t , \ast_t (u^{\perp}_t \wedge l^{\perp}_t)/u^0_t)\, .
\end{equation*}
	
\noindent
where $\ast_t$ denotes the Hodge dual on $\Sigma$ with respect to $h_t$ and the fixed orientation. Clearly, the triple $\left\{(e_u^t ,e_l^t ,e_n^t )\right\}_{t\in \mathbb{R}}$ yields a family of orthonormal coframes on $\Sigma$ with respect to $\left\{ h_t\right\}_{t\in \mathbb{R}}$. Imposing $\lambda_t = 1$ for every $t\in\mathbb{R}$ in equations \eqref{eq:globhyperspinorI} and \eqref{eq:globhyperspinorII} we obtain:
\begin{eqnarray*}
& \partial_t u^{\perp}_t + \Theta_t(u^{\perp}_t) = 0\, , \qquad  \partial_t e^t_l  + \Theta_t(e^t_l) = 0\, ,   \\
& \nabla^{h_t} u^{\perp}_t + u^0_t \Theta_t = 0\, , \qquad \nabla^{h_t} e^t_l = \Theta_t(e^t_l)\otimes e^t_u\, .
\end{eqnarray*}
	
\noindent
Taking the time derivative of the constraint $(u_t^0)^2=\vert u_t^\perp \vert_{h_t}^2$, we conclude $\partial_t u_t^0=0$. Hence using the time independence of $u^0_t$ in the previous equations, we obtain:
\begin{eqnarray*}
&\frac{1}{u^0_t} \nabla^{h_t} u^{\perp}_t + \Theta_t = \nabla^{h_t} e^t_u + \dd\log(u^0_t)\otimes e^t_u + \Theta_t = \nabla^{h_t} e^t_u - \Theta_t(e^t_u)\otimes e^t_u + \Theta_t\, ,\\
&\partial_t e^t_u + \Theta_t(e^t_u) = 0\, .
\end{eqnarray*}
	
\noindent
Similar arguments yield the equation $\partial_t e_n^t+\Theta(e_n^t)=0$. On the other hand:
\begin{eqnarray*}
& \nabla^{h_t} e_n^t = \nabla^{h_t}  \ast_{h_t} (e^t_u\wedge e^t_l) =    \ast_{h_t} (\nabla^{h_t} e^t_u\wedge e^t_l) + \ast_{h_t} ( e^t_u\wedge \nabla^{h_t} e^t_l) \\
& = \ast_{h_t} (\nabla^{h_t} e^t_u\wedge e^t_l) = \Theta(e^t_n) \otimes e^t_u \, .
\end{eqnarray*}
\noindent
Hence, every parallel spinor flow of the form $(\left\{ \lambda_t = 1 \right\}_{t\in \mathbb{R}} , \left\{ h_t \right\}_{t\in \mathbb{R}},\left\{ u^0_t \right\}_{t\in \mathbb{R}},\left\{ u^{\perp}_t , l^{\perp}_t\right\}_{t\in \mathbb{R}})$ produces a canonical section of $\mathrm{F}(\Sigma)$ satisfying \eqref{eq:globhyperspinorcosmo}. Conversely, assume that $\left\{ \fre^t \right\}_{t\in \mathbb{R}}$ is a solution of \eqref{eq:globhyperspinorcosmo}. The third equation in \eqref{eq:globhyperspinorcosmo} guarantees that there exist a family of functions $\left\{ u^0_t\right\}_{t\in\mathbb{R}}$ such that:
\begin{equation*}
\dd\log(u^0_t) = -\Theta_t(e_u^t)\, .
\end{equation*}
	
\noindent
Plugging this equation into the first equation of \eqref{eq:globhyperspinorcosmo}, we obtain:
\begin{equation*}
\partial_t (u^0_t e^t_u) = \partial_t u^0_t e^t_u - u^0_t \Theta_t(e^t_u) =  \partial_t \log(u^0_t) u^0_t e^t_u -  \Theta_t(u^0_t e^t_u) =  -  \Theta_t(u^0_t e^t_u)\, ,
\end{equation*}
	
\noindent
which yields the first equation in \eqref{eq:globhyperspinorI}. The second equation in \eqref{eq:globhyperspinorI} follows similarly. Equations \eqref{eq:globhyperspinorII} and \eqref{eq:restrictionul} follow by interpreting the second equation in  \eqref{eq:globhyperspinorcosmo} as the first Cartan structure equation for the orthonormal frame $\fre^t$ with respect to the metric:
\begin{equation*}
h_{\fre^t} = e^t_u \otimes e^t_u + e^t_l \otimes e^t_l + e^t_n \otimes e^t_n\, ,
\end{equation*}
	
\noindent
and hence we conclude.
\end{proof}

\noindent
We will refer to equations \eqref{eq:globhyperspinorcosmo} as the \emph{comoving parallel-spinor flow equations}, and we will refer to its solutions as \emph{comoving parallel spinor flows}. The general investigation of comoving parallel spinor flows is beyond the scope of this article and will be considered in a separate publication. Instead, we consider two particular important cases  in detail. 


\subsection{A diagonal example on $\mathbb{R}^3$.}
\label{sec:diagonalexample}


Set $\Sigma = \mathbb{R}^3$ with Cartesian coordinates $(x,y,z)$ and consider comoving parallel spinor flows $\left\{ \fre^t\right\}_{t\in \cI}$ of the form:
\begin{equation*}
\fre^t = (\frf^t_u \dd x , \frf^t_l \dd y , \frf^t_n \dd z)\, ,
\end{equation*}

\noindent
for families of functions $\left\{ \frf^t_u\right\}_{t\in\mathbb{R}}$, $\left\{ \frf^t_l\right\}_{t\in\mathbb{R}}$ and $\left\{ \frf^t_n\right\}_{t\in\mathbb{R}}$ on $\mathbb{R}^3$. Hence:.
\begin{equation*}
h_{\fre^t} = (\frf^t_u)^2 \dd x \otimes \dd x + (\frf^t_l)^2 \dd y\otimes \dd y  + (\frf^t_n)^2 \dd z \otimes \dd z \, , \quad (\fre^t)^{\sharp} = (\frac{1}{\frf^t_u} \partial_x , \frac{1}{\frf^t_l} \partial_y , \frac{1}{\frf^t_n} \partial_z)\, ,
\end{equation*}

\noindent
With these provisos in mind, we compute:
\begin{equation}
\Theta_t = - (\frf^t_u \partial_t\frf^t_u\, \dd x\otimes \dd x + \frf^t_l \partial_t\frf^t_l\, \dd y\otimes \dd y + \frf^t_n \partial_t\frf^t_n\, \dd z\otimes \dd z )\, .
\label{eq:thetadiagonal}
\end{equation}

\noindent
Therefore, equation $\partial_t \fre^t  + \Theta_t(\fre^t) = 0$ is automatically satisfied. On the other hand, equations $[\Theta_t(e^t_u)] = 0$ and $\partial_t\Theta_t(e^t_u) = 0$ are equivalent to:
\begin{equation*}
\partial_t \dd \frf_u \wedge \dd x = 0\, , \qquad \partial_t^2 \frf_u = 0\, ,
\end{equation*}

\noindent
implying:
\begin{equation*}
\frf^t_u = \mathfrak{a}  + \mathfrak{b}\,  t\, ,
\end{equation*}

\noindent
where $\mathfrak{b} = \mathfrak{b}(x)$ is a function of the coordinate $x$ and $\mathfrak{a} = \mathfrak{a}(x,y,z)$ is a function of all coordinates of $\mathbb{R}^3$. Note that, in order to have a well defined comoving parallel spinor flow, we must impose the constraint:
\begin{equation*}
\frf^t_u(t,x,y,z) = \mathfrak{a}(x,y,z)  + \mathfrak{b}(x) \, t \neq 0\, ,	
\end{equation*}

\noindent
for every $t\in \cI$ and $(x,y,z)\in \mathbb{R}^3$, which translates into a constraint in the allowed domain of definition $\cI\subset \mathbb{R}$ of $t$. The only equations that remain to be solved for
\begin{equation*}
\fre^t = ((\mathfrak{a} + \mathfrak{b}\, t) \dd x , \frf^t_l \dd y , \frf^t_n \dd z)\, ,
\end{equation*}

\noindent
to be a comoving parallel spinor flow are $\dd \fre^t = \Theta(\fre^t)\wedge e^t_u$, which can be shown to be equivalent to:
\begin{equation*}
\dd \mathfrak{a}\wedge \dd x = 0\, , \qquad (\dd \frf^t_l - \partial_t \frf^t_l \frf^t_u \dd x )\wedge \dd y = 0\, , \qquad (\dd \frf^t_n - \partial_t \frf^t_n \frf^t_u \dd x )\wedge \dd z = 0\, . 
\end{equation*}

\noindent
These equations are in turn equivalent to:
\begin{equation}
\label{eq:equivalentcauchy}
\mathfrak{a} = \mathfrak{a}(x)\, , \quad \partial_x \frf^t_l = \frf^t_u  \partial_t \frf^t_l\, ,\quad \partial_z \frf^t_l = 0\, ,  \quad \partial_x \frf^t_n = \frf^t_u  \partial_t \frf^t_n\, ,\quad \partial_y \frf^t_n = 0\, ,
\end{equation}

\noindent
which do have explicit solutions, as we will show later in particular examples. On the other hand a direct computation shows that the Ricci curvature of the comoving globally hyperbolic Lorentzian metric $g=-\dd t\otimes \dd t + h_{\fre^t}$ associated to such $\fre^t$ vanishes if and only if the following condition holds:
\begin{equation}
\mathfrak{b} \left ( \frac{\partial_t \frf_l^t}{\frf_l}+\frac{\partial_t \frf_n^t}{\frf_n} \right)-\frac{\partial_t \partial _x \frf_l^t}{\frf_l}-\frac{\partial_t \partial _x \frf_n^t}{\frf_n}=0\, .
\label{eq:ricciflatdiagonal}
\end{equation}

\noindent
This condition will be explored in the examples below.

\begin{example}
Suppose that both $\mathfrak{a}$ and $\mathfrak{b}$ are constants, with $\mathfrak{b} \neq 0$. With this assumption, a general solution of equations \eqref{eq:equivalentcauchy} is of the form:
\begin{equation*}
\frf^t_l = \mathfrak{L}_l(x+\log\vert \mathfrak{a} +  \mathfrak{b}\, t\vert / \mathfrak{b},y)\, , \quad \frf^t_n = \mathfrak{L}_n(x+\log\vert \mathfrak{a} +  \mathfrak{b}\, t \vert /\mathfrak{b}, z)\, ,
\end{equation*}

\noindent
for nowhere vanishing smooth functions $\mathfrak{L}_l, \mathfrak{L}_n \in C^{\infty}(\mathbb{R}^2)$. The corresponding coframe $\fre^t$ reads:
\begin{equation*}
\fre^t = ((\mathfrak{a} + \mathfrak{b}t) \dd x , \mathfrak{L}_l(x+\log\vert \mathfrak{a} +  \mathfrak{b}\, t \vert / \mathfrak{b},y) \dd y ,  \mathfrak{L}_n(x+\log\vert \mathfrak{a} +  \mathfrak{b}\, t \vert / \mathfrak{b},z) \dd z)\, ,
\end{equation*}

\noindent
which is well-defined in the intervals $ t\in \cI_1 = (-\infty, -\frac{\mathfrak{a}}{\mathfrak{b}})$ or $ t\in \cI_2 = (-\frac{\mathfrak{a}}{\mathfrak{b}}, \infty)$. The metric associated to the previous global coframe is given by:
\begin{equation*}
g = - \dd t\otimes \dd t + (\mathfrak{a} + \mathfrak{b}\, t)^2 \dd x \otimes \dd x + \mathfrak{L}_l(x+\log\vert \mathfrak{a} +  \mathfrak{b}\, t \vert / \mathfrak{b},y)^2 \dd y\otimes \dd y  + \mathfrak{L}_n(x+\log\vert \mathfrak{a} +  \mathfrak{b}\, t \vert / \mathfrak{b},z)^2 \dd z \otimes \dd z \, ,
\end{equation*}

\noindent
which provides a large family of four-dimensional Lorentzian metrics admitting a parallel spinor. If the induced Riemannian spatial metric:
\begin{equation*}
h_{\fre^t} = (\mathfrak{a} + \mathfrak{b}\,  t)^2 \dd x \otimes \dd x + \mathfrak{L}_l(x+\log\vert \mathfrak{a} +  \mathfrak{b}\, t \vert / \mathfrak{b},y)^2 \dd y\otimes \dd y  + \mathfrak{L}_n(x+\log\vert \mathfrak{a} +  \mathfrak{b}\, t \vert / \mathfrak{b},z)^2 \dd z \otimes \dd z \, ,
\end{equation*}

\noindent
on $\left\{t\right\}\times \mathbb{R}^3 \subset \cI_i\times \mathbb{R}^3$ (for $i=1,2$) is complete for all $t\in \cI_{i}$ we obtain a family of comoving globally hyperbolic metrics on $\cI_{i}\times \mathbb{R}^3$. Equation \eqref{eq:ricciflatdiagonal}, implies now that $g$ is Ricci-flat if and only if:
\begin{equation*}
\frac{\mathfrak{b} \partial_\zeta \mathfrak{L}_l- \partial_\zeta \partial_\zeta \mathfrak{L}_l }{\mathfrak{L}_l}+\frac{\mathfrak{b} \partial_\zeta \mathfrak{L}_n- \partial_\zeta \partial_\zeta \mathfrak{L}_n }{\mathfrak{L}_n}=0\,,
\end{equation*}
where we have defined $\zeta(t,x):=x+\log\vert \mathfrak{a}+\mathfrak{b}\, t \vert /\mathfrak{b}$. This Ricci-flatness condition is satisfied if the functions $\mathfrak{L}_l(\zeta,y)$ and $\mathfrak{L}_n(\zeta,z)$ take the form:
\begin{equation*}
\mathfrak{L}_l(\zeta,y)=w_1 (y) e^{\mathfrak{b} \zeta}+w_2(y)\, , \quad \mathfrak{L}_n(\zeta,z)=w_3 (z) e^{\mathfrak{b} \zeta}+w_4(z)\, ,
\end{equation*}
where $w_1,w_2,w_3,w_4$ are arbitrary smooth functions. 

\end{example}

\begin{example}
Assume that $\mathfrak{a}$ is a possibly non-constant strictly positive function and $\mathfrak{b} = 0$. With this assumption, the general solution of equations \eqref{eq:equivalentcauchy} is of the form:
\begin{equation*}
	\frf^t_l = \mathfrak{L}_l \left (t+\int_0^x \mathfrak{a}(\tau)\dd\tau ,y \right )\, , \quad \frf^t_n = \mathfrak{L}_n \left (t+\int_0^x \mathfrak{a}(\tau)\dd\tau , z \right )\, ,
\end{equation*}

\noindent
for nowhere vanishing smooth functions $\mathfrak{L}_l, \mathfrak{L}_n \in C^{\infty}(\mathbb{R}^2)$. The corresponding coframe $\fre^t$ reads:
\begin{equation*}
	\fre^t = \left (\mathfrak{a}(x)   \dd x , \mathfrak{L}_l\left (t+\int_0^x \mathfrak{a}(\tau)\dd\tau  ,y \right ) \dd y ,  \mathfrak{L}_n\left (t+\int_0^x \mathfrak{a}(\tau)\dd\tau  , z \right ) \dd z \right )\, ,
\end{equation*}

\noindent
which is well-defined for $ t\in \cI = \mathbb{R}$. The metric associated to the previous global coframe is given, after a change and relabeling of coordinates, by the following expression:
\begin{equation*}
	g = - \dd t\otimes \dd t +   \dd x \otimes \dd x + \mathfrak{L}_l(t+x ,y)^2 \dd y\otimes \dd y  + \mathfrak{L}_n(t+x , z)^2 \dd z \otimes \dd z \, ,
\end{equation*}

\noindent
which provides a large family of four-dimensional Lorentzian metrics admitting a parallel spinor. If the induced Riemannian spatial metric:
\begin{equation*}
	h_{\fre^t} = \dd x \otimes \dd x + \mathfrak{L}_l(t+x ,y)^2 \dd y\otimes \dd y  + \mathfrak{L}_n(t+x , z)^2 \dd z \otimes \dd z \, ,
\end{equation*}

\noindent
on $\left\{t\right\}\times \mathbb{R}^3 \subset \cI\times \mathbb{R}^3$ is complete for all $t\in \cI$ we obtain a family of comoving globally hyperbolic metrics on $\cI\times \mathbb{R}^3$.  Implementing the change of coordinates:
\begin{equation*}
	x^{+} = \frac{t+x}{\sqrt{2}}\, , \qquad x^{-} = \frac{-t + x}{\sqrt{2}}\, ,
\end{equation*}

\noindent
the metric $g$ is given by:
\begin{equation*}
g =  \dd x^{+}\odot \dd x^{-} + \mathfrak{L}_l(x^{+} ,y)^2 \dd y\otimes \dd y  + \mathfrak{L}_n(x^{+} , z)^2 \dd z \otimes \dd z \, ,
\end{equation*}

\noindent
after a suitable redefinition of the functions $ \mathfrak{L}_l$ and $\mathfrak{L}_n$. This metric is a particular case of a Lorentzian metric expressed in Schimming coordiantes \cite{Schimming}, which exists in every Lorentzian manifold admitting a parallel null vector field. Equation \eqref{eq:ricciflatdiagonal} implies now that $g$ is Ricci-flat if and only if:
\begin{equation*}
\frac{\partial_{x^+} \partial_{x^+} \mathcal{L}_l }{\mathcal{L}_l}+\frac{\partial_{x^+} \partial_{x^+} \mathcal{L}_n }{\mathcal{L}_n}=0\, ,
\end{equation*}
 Some simple solutions, and thus Ricci-flat examples, can be found just by setting $\partial_{x^+} \partial_{x^+} \mathcal{L}_l=\partial_{x^+} \partial_{x^+} \mathcal{L}_n=0$:
\begin{equation*}
\mathcal{L}_l=w_1(y) x^++w_2(y)\, , \quad \mathcal{L}_n=w_3(z) x^++w_4(z)\,,
\end{equation*}
where $w_1,w_2,w_3,w_4$ are arbitrary smooth functions. 
\end{example}


\subsection{An example in Schimming coordinates.}


In reference \cite{Schimming} it was proven that any four-dimensional space-time $(M,g)$ equipped with a parallel light-like vector field $u^{\sharp} \in \mathfrak{X}(M)$ admits local coordinates $(x^+,x^-,y_1,y_2)$ in which the metric $g$ and the vector field $u^{\sharp}$ are written as follows:
\begin{equation*}
	g = \dd x^{+}\odot \dd x^{-} + k_{x^{+}} \, , \qquad u^{\sharp} = \frac{\partial}{\partial x^{-}}\, .
\end{equation*}

\noindent
where:
\begin{equation*}
	k_{x^{+}}(y_1,y_2) = k_{x^{+}\,ij}\dd y_i \otimes \dd y_j\, , \qquad i , j = 1,2\, .
\end{equation*}

\noindent
is a family of two-dimensional metrics parametrized by the coordinate $x^+$. A simple change of coordinates $\sqrt{2} x^{+} = t + x$ and $\sqrt{2} x^{-} = x-t$ allows to write the previous metric $g$ as:
\begin{equation}
	\label{eq:Schimming}
	g = -\dd t\otimes \dd t + \dd x\otimes \dd x + k_{t+x} \, ,  
\end{equation}

\noindent
whence we obtain a particular type of comoving globally hyperbolic spacetimes. Therefore, it is natural to study comoving parallel spinor flows adapted to the structure of  the metric \eqref{eq:Schimming}. Assume that the previous coordinate system is globally defined. Then, the Cauchy surface is given by $\Sigma = \mathbb{R}\times X$, with $X$ an oriented two-dimensional manifold, and the metric takes the form $h_t = \dd x\otimes \dd x + k_{t+x}$. Consequently, we assume that our comoving parallel spinor flow is of the form:
\begin{equation*}
	\fre^t = (\dd x , e^t_l(x) , e^t_n(x))\, ,
\end{equation*}

\noindent
where $k_{t+x} = e^t_l\otimes e^t_l + e^t_n\otimes e^t_n$. The comoving parallel spinor flow equations \eqref{eq:globhyperspinorcosmo} reduce to:
\begin{equation}
	\label{eq:comovingsimplified}
	\partial_t e^t_i(x) +  \Theta_t(e^t_i(x)) =  0\, , \quad \partial_x e^t_i(x) +  \Theta_t(e^t_i(x)) =  0\, , \quad \dd e^t_i(x)\vert_X  = 0 \, ,\quad  \Theta_t(\partial_x) = 0\, .
\end{equation}

\noindent
Hence, the comoving parallel spinor flow can be considered as a bi-parametric flow, parametrized by $t$ and $x$, for a family of closed oriented frames on $X$. In particular $(X,k_{t+x})$ is flat for every $(t,x)\in \mathbb{R}^2$ and therefore isometric to euclidean space, the flat cylinder or a flat torus. Equations \eqref{eq:comovingsimplified} immediately imply:
\begin{equation*}
	\partial_t e^t_i(x) = \partial_x e^t_i(x) \, , \quad i = l, n\, .
\end{equation*}

\noindent
Therefore, choosing coordinates $(y_1,y_2)$ on $X$, global for the plane and local for the torus and cylinder cases, every such family of solutions can be written as follows:
\begin{eqnarray*}
	e^t_l(x) = \mathfrak{f}^l_1(t+x) \dd y_1 + \mathfrak{f}^l_2(t+x) \dd y_2\, , \quad  e^t_n(x) = \mathfrak{f}^n_1(t+x) \dd y_1 + \mathfrak{f}^n_2(t+x) \dd y_2\, , 
\end{eqnarray*}

\noindent
for functions $\mathfrak{f}^l_1 , \mathfrak{f}^l_2 , \mathfrak{f}^n_1 , \mathfrak{f}^n_2 \in C^{\infty}(\mathbb{R})$ satisfying the following condition everywhere:
\begin{equation}
	\label{eq:constraintexample}
	\delta = \mathfrak{f}^l_1  \mathfrak{f}^n_2  - \mathfrak{f}^n_1 \mathfrak{f}^l_2 \neq 0 \, .
\end{equation}

\noindent
If this condition is satisfied, the dual frame is given by:
\begin{eqnarray*}
	e^t_l(x)^{\sharp} = \frac{1}{\delta}(\mathfrak{f}^n_2 \partial_{y_1} - \mathfrak{f}^n_1 \partial_{y_2})\, , \quad  e^t_n(x)^{\sharp} = \frac{1}{\delta}(- \mathfrak{f}^l_2 \partial_{y_1} + \mathfrak{f}^l_1 \partial_{y_2})\, . 
\end{eqnarray*}

\noindent
In order to guarantee that Equation \eqref{eq:constraintexample} is satisfied, we assume the following ansatz:
\begin{equation*}
	f^l_1 := e^{f_l}\,  \quad \mathfrak{p} := f^l_2 = -f^n_1\, , \quad f^n_2 := e^{f_n}\,
\end{equation*}

\noindent
in terms of functions $f_l\,f_n ,\mathfrak{p} \in C^{\infty}(\mathbb{R})$. This implies $\delta = e^{f_l+f_n} +\mathfrak{p}^2 > 0$ and therefore equations \eqref{eq:comovingsimplified} further reduce to: 
\begin{eqnarray}
\label{eq:comovingsimplifiedIIa}
& \partial_{x^{+}} e^{x^{+}}_l = (\partial_{x^{+}} e^{x^{+}}_l)((e^{x^{+}}_l)^{\sharp}) e^{x^{+}}_l + (\partial_{x^{+}} e^{x^{+}}_n)(( e^{x^{+}}_l)^{\sharp}) e^{x^{+}}_n\, ,\\
\label{eq:comovingsimplifiedIIb}
& \partial_{x^{+}} e^{x^{+}}_n = (\partial_{x^{+}} e^{x^{+}}_l)(( e^{x^{+}}_n)^{\sharp}) e^{x^{+}}_l + (\partial_{x^{+}} e^{x^{+}}_n)(( e^{x^{+}}_n)^{\sharp}) e^{x^{+}}_n\, ,
\end{eqnarray}

\noindent
where we have gone back to the coordinate $x^+ = t + x$ and written $e^{x^{+}}_i := e^t_i(x)$, $i = l , n$. In particular, note that the condition $(\partial_{x^{+}} e^{x^{+}}_n)(( e^{x^{+}}_l)^{\sharp})=(\partial_{x^{+}} e^{x^{+}}_l)(( e^{x^{+}}_n)^{\sharp})$ is necessarily satisfied, as required by Theorem \ref{thm:universalcover}. By direct computation one finds that Equations \eqref{eq:comovingsimplifiedIIa} and Equations \eqref{eq:comovingsimplifiedIIb} turn out to yield a single linearly independent equation which takes the form:
\begin{equation*}
\mathfrak{p}( \partial_{x^{+}}e^{f_l} + \partial_{x^{+}}e^{f_n}) =  (e^{f_l} +  e^{f_n})\partial_{x^{+}}\mathfrak{p}\, .
\end{equation*}
\noindent
The general solution of the previous equation is given by:
\begin{equation*}
	\mathfrak{p} = c\,(e^{f_l} +  e^{f_n})\, ,
\end{equation*}

\noindent
for any real constant $c$. Therefore we are led to the following Lorentzian metric, which by construction admits a parallel light-like vector field given by $\partial_{x^-}$:
\begin{equation}
\begin{split}
g=\dd x^+\odot \dd x^-+(e^{2f_l} +c^2 (e^{f_l}&+e^{f_n})^2 ) \dd y_1 \otimes \dd y_1 + c(e^{2f_l}-e^{2f_n}) \dd y_1 \odot \dd y_2 \\&+(e^{2f_n} +c^2 (e^{f_l}+e^{f_n})^2 ) \dd y_2 \otimes \dd y_2\,.
\end{split}
\label{eq:metricppwave}
\end{equation}

\noindent
The Ricci tensor of the previous metric is given by:
\begin{equation*}
\begin{split}
\mathrm{Ric}^g=\bigg [& 2 c^2 e^{f_l}((\partial_{x^+} f_l)^2+\partial_{x^+} \partial_{x^+} f_l)+2 c^2 e^{f_n}((\partial_{x^+} f_n)^2+\partial_{x^+} \partial_{x^+} f_n)\\&+(1+2c^2)e^{f_l+f_n}((\partial_{x^+} f_l)^2+\partial_{x^+} \partial_{x^+} f_l+(\partial_{x^+} f_n)^2+\partial_{x^+} \partial_{x^+} f_n ) \bigg ] \dd x^+ \otimes \dd x^+\, ,
\end{split}
\end{equation*}

\noindent
which vanishes if the following conditions are satisfied:
\begin{equation*}
(\partial_{x^+} f_l)^2+\partial_{x^+} \partial_{x^+} f_l=0 \, , \quad (\partial_{x^+} f_n)^2+\partial_{x^+} \partial_{x^+} f_n=0\,.
\end{equation*}
These ODEs are solved by:
\begin{equation*}
f_l(x^+)=a+\log\vert x^+-b\vert \, ,\quad f_n (x^+)=c+\log\vert x^+-d\vert \, ,
\end{equation*}
for real constants $a,b,c,d \in \mathbb{R}$. These solutions are well defined if $x^+ \in (-\infty, \mathrm{min}(b,d))$ or if $x^+ \in (\mathrm{max}(b,d),+\infty)$. However, although $f_l$ and $f_n$ present divergences whenever the argument of the logarithm vanishes, this is not problematic for the metric \eqref{eq:metricppwave} as long as $b \neq d$ (otherwise the metric would be degenerate at $x^+=b=d$), since both functions $f_l$ and $f_n$ appear exponentiated. In addition, it can be checked that the space time $(\mathbb{R}^2 \times X, g)$ is a plane wave, since the Riemann curvature tensor $\mathrm{R}^g:\Lambda^2 T (\mathbb{R}^2 \times X) \rightarrow \Lambda^2 T (\mathbb{R}^2 \times X)$ satisfies $\mathrm{R}^g \vert_{(\partial_{x^-})^\perp \wedge (\partial_{x^-})^\perp}=0$ and $\nabla_{V} \mathrm{R}^g=0$ for all $V \in (\partial_{x^-})^\perp$. 
 

\appendix


\phantomsection
\bibliographystyle{JHEP}


\end{document}